\title[Optimal Epoch Stochastic Gradient Descent Ascent]{Optimal Epoch Stochastic Gradient Descent Ascent Methods for Min-Max Optimization}
\author{\Name{Yan Yan}$^1$ \Email{yan-yan-2@uiowa.edu}\\
\Name{Yi Xu}$^2$ \Email{statxy@gmail.com}\\
\Name{Qihang Lin}$^3$ \Email{qihang-lin@uiowa.edu}\\
\Name{Wei Liu}$^4$ \Email{wl2223@columbia.edu}\\
\Name{Tianbao Yang}$^1$ \Email{tianbao-yang@uiowa.edu} \\
\addr $^1$Department of Computer Science, University of Iowa \\
\addr $^2$DAMO Academy, Alibaba Group \\
\addr $^3$Department of Management Sciences, University of Iowa \\
\addr $^4$Tencent AI Lab
}
\newtheorem{thm}{Theorem}
\newtheorem{lem}{Lemma}
\newtheorem{defi}{Definition}
\newtheorem{assumption}{Assumption}
\def\calB{\mathcal{B}}
\def\xbar{{\bar{x}}}
\def\ybar{{\bar{y}}}
\def\xtilde{{\tilde{x}}}
\def\ytilde{{\tilde{y}}}
\def\E{\mathrm{E}}
\def\LHS{{\text{LHS}}}
\def\RHS{{\text{RHS}}}
\def\tildedelta{\tilde{\delta}}
\def\Gap{\text{Gap}}
\def\hatGap{\widehat{\text{Gap}}}
\def\dist{{\it{dist}}}
\newcommand\yancomment[1]{}
\begin{document}

\maketitle

\begin{abstract}
Epoch gradient descent method (a.k.a. Epoch-GD) proposed by \cite{hazan-2011-beyond} was deemed a breakthrough for stochastic strongly convex minimization, which achieves the optimal convergence rate of $O(1/T)$ with $T$ iterative updates for the {\it objective gap}. However, its extension to solving stochastic min-max problems with strong convexity and strong concavity still remains open, and it is still unclear whether a fast rate of $O(1/T)$ for the {\it duality gap} is achievable for stochastic min-max optimization under strong convexity and strong concavity. Although some recent studies have proposed stochastic algorithms with fast convergence rates for min-max problems, they require additional assumptions about the problem, e.g., smoothness, bi-linear structure, etc. 
In this paper, we bridge this gap by providing a sharp analysis of epoch-wise stochastic gradient descent ascent method (referred to as Epoch-GDA) for solving strongly convex strongly concave (SCSC) min-max problems, without imposing any additional assumption about smoothness or the function's structure. To the best of our knowledge, our result is the first one that shows Epoch-GDA can achieve the optimal rate of $O(1/T)$ for the duality gap of general SCSC min-max problems. 
We emphasize that such generalization of Epoch-GD for strongly convex minimization problems to Epoch-GDA for SCSC min-max problems is non-trivial and requires novel technical analysis. Moreover, we notice that the key lemma can also be  used for proving the convergence of Epoch-GDA for weakly-convex strongly-concave min-max problems, leading to a nearly optimal complexity without resorting to smoothness or other structural conditions. 
\end{abstract}

\section{Introduction}

In this paper, we consider {stochastic algorithms} for solving the following min-max saddle-point problem with a general objective function $f$ {\it without smoothness or any other special structure}:
\begin{align}\label{eq:objective}
\min_{x \in X} \max_{y \in Y} f(x, y)  ,
\end{align}
where $X \subseteq \mathbb R^d$ and $Y \subseteq \mathbb R^n$ are closed convex sets and $f: X \times Y \rightarrow \mathbb R$ is continuous.
It is of great interest to find a saddle-point solution to the above problem, which is defined as $(x^*, y^*)$  such that%  for any $x \in X$ and $y \in Y$,
$
f(x^*, y) \leq f(x^*, y^*) \leq f(x, y^*), \forall x\in X, y\in Y.
$
% We can see $\PGap(x, y) \leq \Gap(x, y)$.
Problem (\ref{eq:objective}) covers a number of applications in machine learning, including distributionally robust optimization (DRO)~\cite{namkoongnips2017variance,DBLP:conf/nips/NamkoongD16}, learning with non-decomposable loss functions~\cite{fastAUC18,fanNIPS2017_6653,ying2016stochastic,liu2019stochastic}, and generative adversarial networks~\cite{Goodfellow:2014:GAN:2969033.2969125,pmlr-v70-arjovsky17a}.
% For example, in~\cite{namkoongnips2017variance,DBLP:conf/nips/NamkoongD16,yan2019stochastic}, variance regularized problems are formulated as DRO by setting $f(x, y) = \sum_{i=1}^n y_i \ell_i(x)$ and $Y = \{ y \in \mathbb R^n | D_\phi (y, \frac{ {\bf 1}}{n}) \leq \frac{\rho}{n}, \sum_{i=1}^n y_i = 1, y_i > 0 \}$
% where $D_\phi(p \| q) = \int \phi(\frac{d p}{d q}) d q$ and $\phi: \mathbb R_+ \rightarrow \mathbb R$ is convex with $\phi(1) = 0$.
% Another example is AUC maximization,
% which is a typical non-decomposable loss.
% It is shown that AUC maximization with a square loss is equivalent to a min-max problem 
% $\min_{x, a, b} \max_{\alpha} \frac{1}{n} \sum_{i=1}^n F(x, a, b, \alpha; (\c_i, d_i))$
% with the objective function
% $F(x, a, b, \alpha; (\c, d)) = (1-p) (x^\top \c - a)^2 I_{[d = 1]} + p(x^\top \c - d)^2 I_{[d = -1]} - p(1-p)\alpha^2 + 2(1+\alpha)(p x^\top \c I_{[d = -1]} - (1-p)x^\top \c I_{[d = 1]})$
% where $(\c_i, d_i)$ denotes a feature-label pair, $p$ is the percentage of positive example and $I_{[\cdot]}$ is the indicator function \cite{yan2019stochastic,fastAUC18,fanNIPS2017_6653,ying2016stochastic}. 

In this work, we focus on two classes of the min-max problems: (i) strongly-convex strongly-concave (SCSC) problem where $f$ is strongly convex in terms of $x$ for any $y\in Y$ and is strongly concave in terms of $y$ for any $x\in X$; (ii) weakly-convex strongly-concave (WCSC) problem, where there exists $\rho>0$ such that $f(x, y) + \frac{\rho}{2}\|x\|^2$ is strongly convex in terms of $x$ for any $y\in Y$ and is strongly concave in terms of $y$ for any $x\in X$. Both classes have applications in machine learning~\cite{yan2019stochastic,hassan18nonconvexmm}.

Although stochastic algorithms for convex-concave min-max problems have been studied extensively in the literature, %~\cite{Nemirovski:2009:RSA:1654243.1654247,zhang2017stochastic,citeulike:11703902,sim18ganvi,DBLP:conf/nips/LinLX14,tan2018stochastic,DBLP:journals/corr/YuLY15,DBLP:conf/nips/PalaniappanB16,yan2019stochastic,dang2014randomized,chen2014optimal,wang2017exploiting,du2018linear},} 
their research is still far behind its counterpart for stochastic convex minimization problems.   Below, we highlight some of these gaps to motivate the present work. 
For the sake of presentation, we first introduce some terminologies. %Duality gap is commonly used to measure the convergence of min-max problems.
The duality gap at $(x, y)$ is defined as 
$\Gap(x, y) := f(x, \hat y(x)) - f(\hat x(y), y)$,
where $\hat x(y) := \arg\min_{x' \in X} f(x', y)$ and $\hat y(x) := \arg\max_{y' \in Y} f(x, y')$.
If we denote by  $P(x) := \max_{y' \in Y} f(x, y')$, then $P(x) - P(x^*)$ is the primal objective gap, where $x^*= \arg\min_{x\in X}P(x)$.
 
 When $f$ is convex in $x$ and concave in $y$, many studies have designed and analyzed stochastic primal-dual algorithms for solving the min-max problems under different conditions of the problem (see references in next section).
% {\color{red} TB: you need to add more references from mathematical programming community, e.g., Guanghui Lan's papers etc.} 
A standard result is provided by~\cite{Nemirovski:2009:RSA:1654243.1654247}, which proves that  primal-dual SGD suffers from a convergence rate of $O(1/\sqrt{T})$ for the duality gap without imposing any additional assumptions about the objective function. This is analogous to that for stochastic convex minimization~\cite{Nemirovski:2009:RSA:1654243.1654247}. 
%When the objective function has some special structure and has smoothness for its component functions,  some studies have attempted to derive refined better convergence rates~\cite{}.  
However, the research of stochastic algorithms for SCSC problems lacks behind that for strongly convex minimization problems. A well-known result for stochastic strongly convex minimization is given by \cite{hazan-2011-beyond}, which presents the first fast convergence rate $O(1/T)$ for stochastic strongly convex minimization by the Epoch-GD algorithm, which runs standard SGD in an epoch-wise manner by decreasing the step size geometrically.  However, a fast rate of $O(1/T)$ for the {\it duality gap} of a stochastic algorithm {\bf is still unknown for general SCSC problems}. 
We notice that there are extensive studies  about stochastic algorithms with faster convergence rates than $O(1/\sqrt{T})$ for solving convex-concave min-max problems~~\cite{zhang2017stochastic,tan2018stochastic,citeulike:11703902,du2018linear,dang2014randomized,chen2014optimal,DBLP:conf/nips/PalaniappanB16,hsieh2019convergence,yan2019stochastic,hien2017inexact,zhao2019optimal}.  However, these works usually require additional assumptions about the objective function (e.g., smoothness, bilinear structure) or only prove the convergence in weaker measures (e.g., the primal objective gap, the distance of a solution to the saddle point).

We aim  to bridge this gap by presenting the first optimal rate $O(1/T)$ of the duality gap for solving general SCSC problems. In particular, we propose an epoch-wise stochastic gradient descent ascent (Epoch-GDA) algorithm - a primal-dual variant of Epoch-GD that runs stochastic gradient descent update for the primal variable and stochastic gradient ascent update for the dual variable for solving~(\ref{eq:objective}). Although the algorithmic  generalization is straightforward, the proof of convergence in terms of the duality gap for Epoch-GDA is not straightforward  at all. We note that the key difference in the analysis of Epoch-GDA is that to upper bound the duality gap of a solution $(\bar x, \bar y)$ we need to deal with the distance of an initial solution $(x_0, y_0)$ to the reference solutions $(\hat x(\bar y), \hat y(\bar x))$, where $\hat x(\bar y)=\arg\min_{x'\in X}f(x', \bar y)$ and $\hat y(\bar x) = \arg\max_{y'\in Y}f(\bar x, y')$ depend on $\bar y$ and $\bar x$, respectively.
In contrast, in the  analysis of the objective gap for Epoch-GD, one only needs to deal with the distance from an initial solution $x_0$ to the optimal solution $x^*$, i.e., $\|x_0 - x^*\|_2^2$, which by strong convexity can easily connects to the objective gap $P(x_0) - P(x^*)$, leading to the telescoping sum on the objective gap. Towards addressing the challenge caused by dealing with the duality gap, we present a key lemma that connects the distance measure $\|x_0 -\hat x(\bar y) \|_2^2 +\|y_0 -\hat y(\bar x) \|_2^2 $ to the duality gap of $(x_0, y_0)$ and $(\bar x, \bar y)$. In addition, since we use the same technique as Epoch-GD for handling the variance of stochastic gradient by projecting onto a bounded ball with shrinking radius, we have to carefully prove that such restriction does not affect the duality gap for the original problem, which also needs to deal with bounding  $\|x_0 -\hat x(\bar y)\|_2^2$ and $\|y_0 -\hat y(\bar x)\|_2^2$. 

Moreover, we notice that the aforementioned  key lemma and the telescoping technique  based on the duality gap  can also be used for proving the convergence of Epoch-GDA for {\bf finding an approximate stationary solution of general WCSC problems}. The algorithmic framework  is similar to that proposed by~\cite{hassan18nonconvexmm}, i.e., by solving SCSC problems successively, but with a subtle difference in handling the dual variable. In particular,  we do not need additional condition on the structure of the objective function and extra care for dealing with the dual variable for restart as done in~\cite{hassan18nonconvexmm}. This key difference is caused by our sharper analysis, i.e., we use the telescoping sum based on the duality gap instead of the primal objective gap as in~\cite{hassan18nonconvexmm}.  As a result,  our algorithm and analysis lead to a nearly optimal complexity  for solving WCSC problems without the smoothness assumption on the objective~\cite{arjevani2019lower}~\footnote{Although~\cite{arjevani2019lower} only concerns the lower bound of finding a stationary point of smooth non-convex problems $\min_{x}f(x)$ through stochastic first-order oracle, it is a special case of the WCSC problem. }. 
Finally, we summarize our results and the comparison with existing results in Table~\ref{tab:1}. 
\begin{table*}[t]
	\caption{Summary of complexity results of this work and previous works for finding an $\epsilon$-duality-gap  solution for SCSC  or an $\epsilon$-stationary solution for WCSC min-max problems. We focus on comparison of existing results without assuming smoothness of the objective function. Restriction means whether an additional condition about the objective function's structure is imposed. 
	%Abbreviations: 
	%SStructure (Special Structure), 
	%SM (Smoothness), 
	%Large MB (Large Mini-Batch),
	% and $\widetilde O()$ suppresses a logarithmic factor. 
	%The dependence on $\rho, L$ is only highlighted under the condition of Lipschitz continuity in order to compare different methods.   
	%Sq Distance (Squared distance to the saddle point solution).
% 	$\| x - x^* \|^2 + \| y - y^* \|^2$
}\label{tab:1}
	\centering
	\label{tab:2}
	\resizebox{\textwidth}{!} 
	{
	\scalebox{1}{\begin{tabular}{l|l|l|l|l}
			\toprule
			Setting &  Works  &  Restriction &  Convergence  & Complexity\\
			\midrule
% 			&\cite{yan2019stochastic} & SStructure & Primal Gap & $O\left(1/\epsilon+ \text{Compl}(\mathcal A)\log(1/\epsilon)\right)$ \\
			&\cite{Nemirovski:2009:RSA:1654243.1654247}&No&Duality Gap& $O\left(1/\epsilon^2\right) $\\
			SCSC&\cite{yan2019stochastic} & Yes & Primal Gap & $O\left(1/\epsilon + n \log(1/\epsilon)\right)$ \\
			%&\cite{hsieh2019convergence}& SM & Sq Distance &$O\left(1/\epsilon\right)$ \\	\raisebox{.15cm}{SCSC}
			%& \cite{zhao2019optimal} & SStucture \& SM & Duality Gap & $O\left(1/\epsilon^2\right)$  \\
			&{\bf{This paper }}& {\bf{ No }}& {\bf{Duality Gap}} & {\bm{ $O\left(1/\epsilon\right)$}} \\
			\midrule
			&\cite{hassan18nonconvexmm}&  No  & Nearly Stationary & {$ \widetilde O\left(1/\epsilon^6\right)$}\\\ 
			WCSC &\cite{hassan18nonconvexmm}&  Yes  & Nearly Stationary & {$ \widetilde O\left(1/\epsilon^4+ n/\epsilon^2\right)$}\\ 
   		& {\bf{This paper}} & {\bf{No}} & {\bf{Nearly Stationary}} & {\bm{$\widetilde O\left( 1/\epsilon^4 \right)$}}\\ 
			\bottomrule
	\end{tabular}}
	}
\end{table*}
\setlength{\abovedisplayskip}{3pt}
\setlength{\belowdisplayskip}{3pt}

\section{Related Work }

%Intensive studies have been recently dedicated to stochastic primal-dual algorithms for min-max problems due to the increasing prevalence of their applications in machine learning, including generative adversarial models \cite{Goodfellow:2014:GAN:2969033.2969125}, distributionally robust optimization \cite{namkoongnips2017variance}, maximization of Area under Receiver Operating Characteristic (AUROC) \cite{ying2016stochastic}, etc.
%There are extensive studies on stochastic algorithms for min-max problems in both mathematical programming and machine learning communities. 
Below, we provide an overview of related results in this area and the review is not necessarily exhaustive. In addition, we focus on the stochastic algorithms, and leave deterministic algorithms~\cite{Chambolle:2011:FPA:1968993.1969036,nesterovexces,DBLP:journals/ml/YangMJZ15,gidel2016frank,nouiehed2019solving,hong2016decomposing,hajinezhad2019perturbed,hong2018gradient,lu2019hybrid,hamedani2018primal} out of our discussion. 

\cite{Nemirovski:2009:RSA:1654243.1654247} is one of the early works that studies  stochastic primal-dual gradient methods for convex-concave min-max problems, which establishes a convergence rate of $O(1/\sqrt{T})$ for the duality gap of general convex-concave problems.
Following this work, many studies have tried to improve the algorithm and the analysis for a certain class of problems by exploring the smoothness condition of some component functions~\cite{juditsky2011solving,zhao2019optimal,hsieh2019convergence} or  bilinear structure of the objective function~\cite{chen2014optimal,dang2014randomized}. For example,  \cite{zhao2019optimal} considers a family of min-max problems whose objective is $f(x) + g(x) + \phi(x, y) - J(y)$, where the smoothness condition is imposed on $f$ and $\phi$ and strong convexity is imposed on $f$ if necessary, and  establishes optimal or nearly optimal complexity of a stochastic primal-dual hybrid algorithm. Although the dependence on each problem parameter of interest is made (nearly) optimal, the worst case complexity is still $O(1/\sqrt{T})$. \cite{hsieh2019convergence} considers single-call stochastic extra-gradient and establishes $O(1/T)$ rate for smooth and strongly monotone variational inequalities in terms of  the square distance from the returned solution to the saddle point.  The present work is complementary to these developments by making no assumption on smoothness or the structure of the objective but considers strong (weak) convexity and strong concavity of the objective function. It has applications in robust learning with non-smooth loss functions~\cite{yan2019stochastic,hassan18nonconvexmm}.

%Frank Wolfe algorithms are also analyzed for saddle point problems in \cite{gidel2016frank} with smooth condition. However, the worst-case convergence rate for the duality gap of these algorithms is still $O(1/\sqrt{T})$.

In the machine learning community, many works have considered stochastic primal-dual algorithms for solving regularized loss minimization problems, whose min-max formulation usually exhibits bi-linear structure~\cite{zhang2017stochastic,citeulike:11703902,wang2017exploiting,du2018linear,DBLP:conf/nips/PalaniappanB16}. For example, \cite{zhang2017stochastic} designs a stochastic primal-dual coordinate (SPDC) method for SCSC problems with bilinear structure, which enjoys a linear convergence for the duality gap.
Similarly, in~\cite{DBLP:journals/corr/YuLY15,tan2018stochastic}, different variants of SPDC are proposed and analyzed for problems with the bilinear structure.
%There are many other works exploiting the bilinear structure, including \cite{zhang2017stochastic,citeulike:11703902,wang2017exploiting,du2018linear,dang2014randomized,chen2014optimal}. 
\cite{DBLP:conf/nips/PalaniappanB16} proposes  stochastic variance reduction methods for a family of saddle-point problems with special structure that yields a linear convergence rate.
%In \cite{zhao2019optimal}, a family of min-max problems with the smoothness condition are considered and an optimal stochastic primal-dual algorithms are designed and analyzed, leading to a convergence rate of $O(1/\sqrt{T})$.
%A single-call extra gradient method is proposed in \cite{hsieh2019convergence} for variational inequalities (VI), which derives a convergence rate of $O(1/T)$ under the smoothness condition for squared distance to the saddle point solution that is a weaker measure than the duality gap.
% Other VI algorithms including \cite{chen2017accelerated,juditsky2011solving}, solve stochastic VI in $O(1/\epsilon^2)$ complexity.
An exception that makes no smoothness assumption and imposes no bilinear structure is a recent work \cite{yan2019stochastic}. It considers a family of functions $f(x, y) = y^{\top}\ell(x) - \phi^*(y) + g(x)$ and proposes  a stochastic primal-dual algorithm similar to Epoch-GDA. The key difference is that  \cite{yan2019stochastic} designs a particular scheme that computes a restarting dual solution based on $\nabla\phi(\ell(\bar x))$, where $\bar x$ is a restarting primal solution in order to derive a fast rate of $O(1/T)$ under strong convexity and strong concavity. Additionally, their fast rate $O(1/T)$ is in terms of the primal objective gap, which is weaker than our convergence result in terms of the duality gap.

There is also increasing interest in stochastic primal-dual algorithms for solving WCSC min-max problems. 
To the best of our knowledge, \cite{hassan18nonconvexmm} is probably the first work that comprehensively studies stochastic algorithms for solving WCSC min-max problems.  
To find a nearly $\epsilon$-stationary point, their algorithms  suffer from  an $O(1/\epsilon^6)$ iteration complexity without strong concavity and an $O(1/\epsilon^4+n/\epsilon^2)$ complexity with strong concavity and a special structure of the objective function that is similar to that imposed in \cite{yan2019stochastic}.
Some recent works are trying to improve the complexity for solving WCSC min-max problems by exploring other conditions (e.g., smoothness)~\cite{DBLP:journals/corr/abs-1906-00331,arXiv:2001.03724}. For example,  \cite{DBLP:journals/corr/abs-1906-00331} establishes an $O(1/\epsilon^4)$ complexity for a single-loop stochastic gradient descent ascent method. However, their analysis requires the smoothness condition and their algorithm needs to use  a large mini-batch size in the order $O(1/\epsilon^2)$. In contrast, we impose neither assumption about smoothness nor special structure of the objective function. The complexity of our algorithm is $\widetilde O(1/\epsilon^4)$ for finding a nearly $\epsilon$-stationary point, which is the state of the art result for the considered non-smooth WCSC problem. 

\section{Preliminaries}

This section provides some notations and assumptions used in the paper.
We let $\|\cdot\|$ denote the Euclidean norm of a vector. 
Given a function $f: \mathbb R^d \rightarrow \mathbb R$, we denote the Fr\'echet subgradients and limiting Fr\'echet gradients by $\hat{\partial} f$ and $\partial f$ , respectively, i.e., at $x$, 
$
\hat{\partial} f(x) = \{ v \in \mathbb R^d : \lim_{x \rightarrow x'} \inf \frac{ f(x) - f(x') - v^{\top} ( x - x' ) }{ \| x - x' \| } \geq 0 \}  ,
$ 
and
$
\partial f(x) = \{ v_k \in \mathbb R^d : \exists x_{k} \stackrel{f}{\rightarrow} x, v_{k} \in \hat{\partial} f(x_{k}), v_{k} \rightarrow v, v \in \hat \partial f(x) \}.
$
Here $x_{k} \stackrel{f}{\rightarrow} x$ represents $x_{k} \rightarrow x$ with $f(x_{k}) \rightarrow f(x)$.
A function $f(x)$ is $\mu$-strongly convex on $X$ if for any $x, x' \in X$,
$
\partial f(x')^\top (x - x') + \frac{\mu}{2} \| x - x' \|^2 \leq f(x) - f(x')  .
$
A function $f(x)$ is $\rho$-weakly convex on $X$ for any $x, x' \in X$
$
\partial f(x')^\top (x - x') - \frac{\rho}{2} \| x - x' \|^2 \leq f(x) - f(x')  .
$
Let $\mathcal G_x = \partial_x f(x, y; \xi)$ denote a stochastic subgradient of $f$ at $x$ given $y$,
where $\xi$ is used to denote the random variable. 
Similarly, let $\mathcal G_y = \partial_y f(x, y; \xi)$ denote a stochastic sugradient of $f$ at $y$ given $x$.
Let $\Pi_\Omega[\cdot]$ denote the projection onto the set $\Omega$, and let $\mathcal B(x, R)$ denote an Euclidean ball centered at $x$ with a radius $R$. 
Denote by $\dist(x, X)$ the distance between $x$ and the set $X$, i.e., $\dist(x, X) = \min_{v \in X} \| x - v \|$.
Let $\tilde O(\cdot)$ hide some logarithmic factors.

% % by yanyan: this part has been moved to Introduction
% For min-max problems with SCSC functions, we use {\it duality gap} to measure the convergence.
% Let us define $\Gap(x, y) = \max_{y' \in Y} f(x, y') - \min_{x' \in X} f(x', y) $, which is the duality gap at $(x, y)$.
% For simplicity, let $\hat x(y) = \arg\min_{x' \in X} f(x', y)$ and $\hat y(x) = \arg\max_{y' \in Y} f(x, y')$.

For a WCSC min-max problem, it is generally a hard problem to find a saddle point. Hence, we use {\it nearly $\epsilon$-stationarity} as the measure of convergence for solving WCSC problems~\cite{hassan18nonconvexmm}, which is defined as follows.
\begin{defi}
A solution $x$ is a nearly $\epsilon$-stationary point of $\min_x\psi(x)$ if there exist $z$ and a constant $c > 0$ such that $\| z - x \| \leq c \epsilon$ and $\dist(0, \partial \psi(z) ) \leq \epsilon$.
\end{defi}
\noindent
For a $\rho$-weakly convex function $\psi(x)$, let $z = \arg\min_{x \in \mathbb R^d} \psi(x) + \frac{\gamma}{2} \| x - \tilde x \|^2$ where $\gamma > \rho$ and $\tilde x \in \mathbb R^d$ is a reference point.
Due to the strong convexity of the above problem, $z$ is unique and $0 \in \partial \psi(z) + \gamma ( z - \tilde x )$,
which results in $\gamma (\tilde x - z) \in \partial \psi(z)$,
so that $\dist(0, \partial \psi(z)) \leq \gamma \| \tilde x - z \|$.
We can find a nearly $\epsilon$-stationary point $\tilde x$ as long as $\gamma \| \tilde x - z \| \leq \epsilon$. 

Before ending this section, we present some assumptions that will be imposed in our analysis.
% and we suppose that Assumption \ref{ass:1} holds throughout the paper.
\begin{assumption}\label{ass:1}
$X$ and $Y$ are closed convex sets.
There exist initial solutions $x_0\in X, y_0\in Y$ and $\epsilon_0>0$ such that $\Gap(x_0, y_0)\leq \epsilon_0$. 
\end{assumption}

\begin{assumption}\label{ass:2}
(1) $f(x, y)$ is $\mu$-strongly convex in $x$ for any $y \in Y$ and $\lambda$-strongly concave in $y$ for any $x \in X$.
(2) There exist $B_1, B_2>0$ such that  $\E[ \exp( \frac{ \| \mathcal G_x \|^2 }{B_1^2} ) ] \leq \exp(1)$ and
$\E [ \exp( \frac{ \| \mathcal G_y \|^2 }{B_2^2} ) ] \leq \exp(1)$.
\end{assumption}

\begin{assumption}\label{ass:3}
%(1) $X$ and $Y$ are compact sets and $f(x, y)$ is finite on $X \times Y$.
(1) $f(x, y)$ is $\rho$-weakly convex in $x$ for any $y \in Y$ and is  $\lambda$-strongly concave in $y$ for any $x \in X$.
(2) $\E[ \| \mathcal G_x \|^2 ] \leq M_1^2$ and $\E [ \| \mathcal G_y \|^2 ] \leq M_2^2$.
\end{assumption}
{\bf Remark:} When $f(x, y)$ is smooth in $x$ and $y$, the second condition in the above assumption can be replaced by the bounded variance condition. 

\section{Main Results }
%In this section, we present restarted stochastic gradient methods and their convergence results for two cases where $f(x, y)$ is $\mu$-strongly-convex $\lambda$-strongly concave and $\rho$-weakly-convex $\lambda$-strongly concave, respectively.   

\subsection{Strongly-Convex Strongly-Concave Min-Max Problems}
\label{subsection:scsc}

% Paragraph 1: general introduction to the proposed algorithm
In this subsection, we present the main result for solving SCSC problems. The proposed Epoch-GDA algorithm for SCSC min-max problems is shown in Algorithm~\ref{alg:RSPD_scsc}.
% This algorithm can be considered as a primal-dual variant of the stochastic algorithm in~\cite{ICMLASSG} for minimization problems.  
% Paragraph 2: detailed introduction to the two loops and how to restart (key: simple primal-dual and simple restart)
As illustrated, our algorithm consists of a series of epochs.
In each epoch (Line \ref{alg1_modified:line:pd_sa_inner_loop_start} to \ref{alg1_modified:line:pd_sa_inner_loop_end}), standard primal-dual updates are performed.
After an epoch ends, in Line \ref{line_modified:pd_sa_update_x_initial}, the solutions $\bar x_k$ and $\bar y_k$ averaged over the epoch are returned as the initialization for the next epoch.
In Line \ref{alg1_modified:line:pd_update_R}, step sizes $\eta_{x, k+1}$ and $\eta_{y, k+1}$, the radius $R_{k+1}$ and the number of iterations $T_{k+1}$ are also adjusted for the next epoch.
The ball constraints  $\mathcal B(x_0^k, R_k)$ and $\mathcal B(y_0^k, R_k)$ at each iteration are used for the convergence analysis in high probability  as  in \cite{hazan-2011-beyond,hazan2014beyond}. It is clear that Epoch-GDA can be considered as a primal-dual variant of Epoch-GD~\cite{hazan-2011-beyond,hazan2014beyond}. 

% Paragraph 3: key difference between the previous algorithms and the new one (should not analyze the stronger convergence?)
% We would like to emphasize that our restarted primal-dual algorithm is simple and does not include any complicated calculation, such as the deterministic updates at each outer loop in the algorithms of \cite{yan2019stochastic}.
% Algorithm in \cite{yan2019stochastic} is a two-loop algorithm and deals with a special objective function, i.e., $f(x, y) = g(x) - y^\top \ell(x) - \phi^*(y)$.
% It shares the same inner loop updates and the approach to restarting $x$ with Algorithm \ref{alg:RSPD_scsc}, but differs in how to restart $y$.
% It requires to solve the $\max$ problem over $y$ (by a deterministic update) at each outer loop, which may have $O(n)$ computational complexity for $y \in \mathbb R^n$.
% Instead, our algorithm handles general objective function $f(x, y)$ with simple restart updates.

% {\color{red}TB: try to give some descriptions of the updates here, what are the keys, what you want to emphasize, what is the difference from our previous work, etc.}
The following theorem shows that the iteration complexity of Algorithm \ref{alg:RSPD_scsc} to achieve an $\epsilon$-duality gap  for a general SCSC problem (\ref{eq:objective}) is $O(1/\epsilon)$.
% The key idea is to first show the convergence of duality gap with respect to the ball constraints $\calB(x_0^k, R_k)$ and $\calB(y_0^k, R_k)$ in an inner loop (Lemma \ref{lemma:convergence_RSPDsc_per_stage}).
% Then we investigate the conditions to make $\hat x(\bar y_k) \in \calB(x_0^k, R_k)$ and $\hat y(\bar x_k) \in \calB(y_0^k, R_k)$ given the average solution $(\bar x_k, \bar y_k)$, which is the key to derive the duality gap $\Gap(\bar x_k, \bar y_k)$.
% Finally, under such conditions, we show the convergence of duality gap in terms of outer loops (Theorem~\ref{thm:connected_PD_gap_scsc}) and the total iteration complexity (Corollary~\ref{thm:tot_iter_scsc}).
% Theorem \ref{thm:connected_PD_gap_scsc} indicates that, when running the $k$-th outer loop, by properly setting the values of $R_k$, $\eta_x^k$, $\eta_y^k$ and $T_k$, the duality gap would be halved with high probability.
% Theorem \ref{thm:tot_iter_scsc} shows the total iteration number $O(1/\epsilon)$ to reach $\epsilon$-duality gap.

\setlength{\textfloatsep}{5pt}% Remove \textfloatsep

\begin{algorithm}[t]
% \caption{Restarted Stochastic Gradient Method (RSG-MM-1) for SCSC}
\caption{Epoch-GDA for SCSC Min-Max Problems}
\label{alg:RSPD_scsc}
\begin{algorithmic}[1]
\STATE Init.: $x_{0}^1 = x_0 \in X$,
              $y_{0}^1 = y_0 \in Y$, $ \eta^1_x, \eta^1_y, R_1, T_1$
\FOR{$k = 1, 2, ..., K$}

\FOR{$t = 0, 1, 2, ..., T_k - 1$}
\label{alg1_modified:line:pd_sa_inner_loop_start}

  \STATE Compute stochastic gradients $\mathcal G^k_{x, t} = \partial_x f(x_t^k, y_t^k; \xi_t^k)$ and $\mathcal G^k_{y, t} = \partial_y f(x_t^k, y_t^k; \xi_t^k)$.
  \STATE $x_{t+1}^k = \Pi_{X \cap \mathcal B(x_0^k, R_k)} (x_{t}^k - \eta_x^k\mathcal G^k_{x, t}) $
  \label{alg1_modified:line:pd_sa_update_x}
  \STATE $y_{t+1}^k = \Pi_{Y \cap \mathcal B(y_0^k, R_k)} (y_t^k + \eta_y^k \mathcal G^k_{y,t}) $
  \label{alg1_modified:line:pd_sa_update_y}
\ENDFOR
\label{alg1_modified:line:pd_sa_inner_loop_end}

\STATE $x_{0}^{k+1} = \xbar_{k} = \frac{1}{T_k} \sum_{t=0}^{T_k-1} x_{t}^k$ 
\label{line_modified:pd_sa_update_x_initial}, 
       $y_{0}^{k+1} = \ybar_{k} = \frac{1}{T_k} \sum_{t=0}^{T_k-1} y_{t}^k$
\label{line_modified:pd_sa_update_y_initial}
\STATE $\eta_{x}^{k+1} = \frac{\eta_x^k }{2}$,
       $\eta_{y}^{k+1} = \frac{\eta_y^k }{2}$, 
       $R_{k+1} = R_k / \sqrt{2}$,
       $T_{k+1} = 2 T_k$.
\label{alg1_modified:line:pd_update_R}
\ENDFOR
\STATE Return $(\bar x_K, \bar y_K).$
\end{algorithmic}
\end{algorithm}

\begin{thm}\label{thm:tot_iter_scsc}
% Suppose Assumption~\ref{ass:2} holds. 
% Suppose Assumption~\ref{ass:2} and parameter setting in Lemma \ref{lem:conditions_for_primaldual_gap} hold and $R_1\geq2 \sqrt{ \frac{ 2 \epsilon_0 }{ \min\{ \mu, \lambda \} } } $. 
Suppose Assumption \ref{ass:1} and Assumption~\ref{ass:2} hold and let $\delta\in(0,1)$ be a failing probability and $\epsilon\in(0,1)$ be the target accuracy level for the duality gap.
Let   $K = \lceil \log(\frac{\epsilon_0}{\epsilon}) \rceil$ 
and
$\tildedelta = \delta/K $, and the initial parameters are set by $R_1\geq2 \sqrt{ \frac{ 2 \epsilon_0 }{ \min\{ \mu, \lambda \} } } $,
$\eta_x^1 = \frac{ \min\{ \mu, \lambda \} R_1^2 }{ 40 ( 5 + 3 \log(1/\tilde \delta) ) B_1^2 }$,
$\eta_y^1 = \frac{ \min\{ \mu, \lambda \} R_1^2 }{ 40 ( 5 + 3 \log(1/\tilde \delta) ) B_2^2 }$ and
$$T_1 \geq \frac{ \max \Big\{ 320^2 (B_1 + B_2)^2 3 \log( 1 / \tilde \delta ), 3200 ( 5 + 3 \log(1/\tilde \delta) ) \max\{ B_1^2, B_2^2 \} \Big\} }{ \min\{\mu, \lambda\}^2 R_1^2 }.$$
Then the total number of iterations of Algorithm \ref{alg:RSPD_scsc} to achieve an $\epsilon$-duality gap, i.e., $\Gap(\bar x_K, \bar y_K) \leq \epsilon$, with probability $1 - \delta$ is
$$
T_{tot} = \frac{ \max \Big\{ 320^2 (B_1 + B_2)^2 3 \log(\frac{1}{\tildedelta}), 3200 ( 5 + 3 \log(1/\tilde \delta) ) \max\{ B_1^2, B_2^2 \} \Big\} }{ 4 \min\{ \mu, \lambda \} \epsilon } . 
$$
\end{thm}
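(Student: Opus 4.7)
The plan is to prove the theorem by induction over the $K$ epochs, showing that the duality gap contracts by a factor of two in each epoch with high probability, and then summing $T_k=2^{k-1}T_1$ to recover the stated total iteration count. Concretely, it suffices to establish a one-epoch contraction lemma: under the schedule $\eta_x^k=\eta_x^1/2^{k-1}$, $\eta_y^k=\eta_y^1/2^{k-1}$, $R_k=R_1/2^{(k-1)/2}$, $T_k=2^{k-1}T_1$, if $\Gap(x_0^k,y_0^k)\le \epsilon_k:=\epsilon_0/2^{k-1}$, then with probability at least $1-\tildedelta$ we have $\Gap(\xbar_k,\ybar_k)\le \epsilon_k/2$. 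Setting $K=\lceil\log(\epsilon_0/\epsilon)\rceil$ and $\tildedelta=\delta/K$, a union bound closes the induction; since $T_{\text{tot}}=\sum_{k=1}^{K}2^{k-1}T_1\le 2^{K}T_1\le (2\epsilon_0/\epsilon)T_1$, substituting the prescribed $T_1$ and $R_1^2=8\epsilon_0/\min\{\mu,\lambda\}$ yields the claimed $T_{\text{tot}}$.

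The central novel step is a key lemma that replaces the one-sided bound $\tfrac{\mu}{2}\|x_0-x^*\|^2\le P(x_0)-P^*$ used by Epoch-GD. Applying $\mu$-strong convexity of $f(\cdot,\ybar_k)$ at its minimizer $\xhat(\ybar_k)$ and $\lambda$-strong concavity of $f(\xbar_k,\cdot)$ at its maximizer $\yhat(\xbar_k)$, I would obtain
\[
\tfrac{\min\{\mu,\lambda\}}{2}\bigl(\|x_0^k-\xhat(\ybar_k)\|^2+\|y_0^k-\yhat(\xbar_k)\|^2\bigr)\le f(x_0^k,\ybar_k)-f(\xhat(\ybar_k),\ybar_k)+f(\xbar_k,\yhat(\xbar_k))-f(\xbar_k,y_0^k).
\]
Regrouping the right-hand side as $\bigl(f(x_0^k,\ybar_k)-f(\xbar_k,y_0^k)\bigr)+\bigl(f(\xbar_k,\yhat(\xbar_k))-f(\xhat(\ybar_k),\ybar_k)\bigr)$ and using $f(x_0^k,\ybar_k)\le f(x_0^k,\yhat(x_0^k))$ and $-f(\xbar_k,y_0^k)\le -f(\xhat(y_0^k),y_0^k)$ upper-bounds it by $\Gap(x_0^k,y_0^k)+\Gap(\xbar_k,\ybar_k)$. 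This is the distance-to-duality-gap link that the introduction emphasizes.

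For the per-epoch argument, I would run the standard primal-dual SGD analysis: starting from nonexpansiveness of the projections onto $X\cap\calB(x_0^k,R_k)$ and $Y\cap\calB(y_0^k,R_k)$, summing over $t$, and using convexity in $x$ and concavity in $y$, for any feasible reference pair $(x,y)$,
\[
\sum_{t=0}^{T_k-1}\bigl[f(x_t^k,y)-f(x,y_t^k)\bigr]\le \tfrac{\|x_0^k-x\|^2}{2\eta_x^k}+\tfrac{\|y_0^k-y\|^2}{2\eta_y^k}+\tfrac{T_k}{2}(\eta_x^k B_1^2+\eta_y^k B_2^2)+\mathcal M_k,
\]
where $\mathcal M_k$ collects martingale differences from $\mathcal G^k_{x,t}-\partial_x f$ paired with $x_t^k-x$ and $\mathcal G^k_{y,t}-\partial_y f$ paired with $y-y_t^k$. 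Dividing by $T_k$, using Jensen to pass to $f(\xbar_k,y)-f(x,\ybar_k)$ on the left, and setting $(x,y)=(\xhat(\ybar_k),\yhat(\xbar_k))$ together with the key lemma yields a self-bounding inequality
\[
\Gap(\xbar_k,\ybar_k)\le \tfrac{C_k}{T_k\min\{\mu,\lambda\}\min\{\eta_x^k,\eta_y^k\}}\bigl(\Gap(x_0^k,y_0^k)+\Gap(\xbar_k,\ybar_k)\bigr)+\tfrac{\eta_x^k B_1^2+\eta_y^k B_2^2}{2}+\tfrac{\mathcal M_k}{T_k}.
\]
With the constants chosen so that the coefficient in front of the bracket is at most $1/4$, rearranging gives $\Gap(\xbar_k,\ybar_k)\le \tfrac12\Gap(x_0^k,y_0^k)+\text{(noise)}$, and the step-size choices $\eta_x^k,\eta_y^k\propto\epsilon_k/B_i^2$ make the noise part at most $\epsilon_k/4$.

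Two issues close the argument. First, the reference points must lie in the balls, which I handle inductively: the key lemma combined with $\Gap(x_0^k,y_0^k)\le\epsilon_k$ and $\Gap(\xbar_k,\ybar_k)\le\epsilon_k$ gives $\|x_0^k-\xhat(\ybar_k)\|^2+\|y_0^k-\yhat(\xbar_k)\|^2\le 4\epsilon_k/\min\{\mu,\lambda\}\le R_k^2$, which holds by the assumption $R_1\ge 2\sqrt{2\epsilon_0/\min\{\mu,\lambda\}}$ and the schedule $R_{k+1}=R_k/\sqrt2$. Second, the deterministic bound must be upgraded to high probability: the increments of $\mathcal M_k$ are bounded by $2R_k(B_1+B_2)$ (using the ball constraints), so a Hoeffding-type inequality for vector martingales yields a $\sqrt{3\log(1/\tildedelta)}$-type deviation, while a Bernstein-type inequality (in the style of Hazan--Kale) controls the fluctuation of $\sum_t\|\mathcal G\|^2$ about $T_kB_i^2$ through the sub-Gaussian assumption, producing the $5+3\log(1/\tildedelta)$ factor visible in the prescribed $\eta_x^1,\eta_y^1,T_1$. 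The main obstacle is the intertwining at Step 3: because the key lemma forces $\Gap(\xbar_k,\ybar_k)$ onto both sides of the per-epoch bound, the step-size and epoch-length constants must be tuned aggressively enough to guarantee contraction while simultaneously absorbing the high-probability deviations --- matching these constants, which is what produces the explicit $40$, $3200$, and $320^2$ factors in the theorem, is the most delicate part of the proof.
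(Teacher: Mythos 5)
Your overall architecture matches the paper's: a per-epoch contraction established via a distance-to-duality-gap lemma, a standard primal--dual regret bound over balls of shrinking radius, and high-probability deviation control, then geometric summation. Your version of the key lemma is in fact cleaner and tighter than the paper's Lemma~\ref{lemma:key_lemma}: by applying $\mu$-strong convexity of $f(\cdot,\bar y_k)$ directly at its minimizer $\hat x(\bar y_k)$ (and the analogous bound in $y$) you obtain a $\min\{\mu,\lambda\}/2$ coefficient, whereas the paper detours through the saddle point $(x^*,y^*)$ via $\|a-b\|^2\le 2\|a-c\|^2+2\|c-b\|^2$ and loses an extra factor of two, giving only $\mu/4$, $\lambda/4$. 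Your direct decomposition of the right-hand side into $\Gap(x_0^k,y_0^k)+\Gap(\bar x_k,\bar y_k)$ is also correct.

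There are, however, two genuine gaps. First, the ball-containment step is circular as written: you invoke $\Gap(\bar x_k,\bar y_k)\le \epsilon_k$ to show $\hat x(\bar y_k),\hat y(\bar x_k)$ lie in the balls, but that gap bound is precisely what you are trying to establish in the current epoch and it cannot be read off from the per-epoch regret lemma until the reference points are known to be feasible. The paper resolves this in Lemma~\ref{lem:conditions_for_primaldual_gap} by applying the key lemma to the \emph{ball-constrained} min--max problem whose optimizers $\hat x_R(\bar y_k)$, $\hat y_R(\bar x_k)$ are feasible by construction; the constrained gap at $(\bar x_k,\bar y_k)$ can then be bounded unconditionally by Lemma~\ref{lemma:convergence_RSPDsc_per_stage} (plugging in these constrained optimizers with $\|\cdot\|\le R_k$), the constrained gap at $(x_0^k,y_0^k)$ is dominated by the unconstrained $\Gap(x_0^k,y_0^k)\le\epsilon_{k-1}$, and strict interiority of $\hat x_R,\hat y_R$ then forces them to coincide with the unconstrained $\hat x,\hat y$. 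Your proposal needs this constrained-versus-unconstrained distinction to close.

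Second, your martingale term $\mathcal M_k$ pairs $\mathcal G^k_{x,t}-\partial_x f$ with $x_t^k - x$ where $x=\hat x(\bar y_k)$ depends on the \emph{entire} epoch trajectory; that inner product is not a martingale difference and Lemma~\ref{lemma:inner_product_term_hp_bound} cannot be applied to it directly. The paper's proof of Lemma~\ref{lemma:convergence_RSPDsc_per_stage} introduces a ghost-iterate sequence $\tilde x_t,\tilde y_t$ driven by the noise $-(\Delta_x^t-\partial_x^t)$, so that the regret decomposes and the noise appears only as $(x_t-\tilde x_t)^\top(\partial_x^t-\Delta_x^t)$, which is a genuine martingale difference with increments bounded by $2R_k(B_1+B_2)$ thanks to the ball constraints. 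Without this device (or an explicit covering argument over reference points), the high-probability bound does not go through. Both gaps are repairable with standard machinery, but they are not addressed in your proposal; your tighter key lemma is the one place where you actually improve on the paper.
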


\begin{remark}
% {\bf Remark:} 
% Theorem \ref{thm:tot_iter_scsc} indicates that the iteration complexity of Algorithm \ref{alg:RSPD_scsc} for achieving $\epsilon$-duality gap is $O(1/\epsilon)$.
To the best of our knowledge, this is the first study that achieves a fast rate of $O(1/T)$ for the duality gap of a general SCSC min-max problem without any special structure assumption or smoothness of the objective function and an additional computational cost.
In contrast, even if the algorithm in \cite{yan2019stochastic} attains the $O(1/T)$ rate of convergence, it i) only guarantees the convergence of the primal objective gap, rather than the duality gap, ii) additionally requires a special structure of the objective function,
and iii) needs an extra $O(n)$ computational cost of the deterministic update at each outer loop to handle the maximization over $y$.
In contrast, Algorithm \ref{alg:RSPD_scsc} has stronger theoretical results with less restrictions of the problem structures and computational cost.
\end{remark}

\begin{remark}
{
A lower bound of $O(1/T)$ for stochastic strongly convex minimization problems has been proven in \cite{agarwal2009information,hazan2014beyond}.
Due to $\Gap(x, y) \geq P(x) - P(x^*)$, bounding the duality gap is more difficult than bounding the primal gap.
This means that our convergence rate matches the lower bound and is therefore the best possible convergence rate without adding more assumptions.
}
\end{remark}

\subsection{Weakly-Convex Strongly-Concave Problems}
\label{subsection:wcsc}

\begin{algorithm}[t]
% \caption{Restarted Stochastic Gradient Method (RSG-MM-2) for WCSC}
\caption{Epoch-GDA for WCSC Min-Max Problems}
\label{alg:wcc}
\begin{algorithmic}[1]
\STATE Init.: $x_{0}^{1} = x_0 \in X$, 
              $y_{0}^{1} = y_0 \in Y$,
              $\gamma = 2 \rho$.

\FOR{$k = 1, 2, ..., K$}

  \STATE Set $T_k = \frac{106(k+1)}{3}$,
             $\eta_x^k = \frac{ 4 }{\rho ( k + 1 ) }$,
             $\eta_y^k = \frac{ 2 }{\lambda ( k + 1 ) }$.
  
\FOR{$t = 1, 2, ..., T_k$}
\label{alg:wcc:line:inner0}

  \STATE Compute $\mathcal G^k_{x, t} = \partial_x f (x_t^k, y_t^k; \xi_t^k)$ and
             $\mathcal G^k_{y, t} = \partial_y f (x_t^k, y_t^k; \xi_t^k)$.
             
  \STATE $x_{t+1}^{k} = \arg\min_{x \in X} x^\top \mathcal G^{k}_{x, t} + \frac{1}{2\eta_x^k} \| x - x_{t}^k \|^2 + \frac{\gamma}{2} \| x - x_0^k \|^2 $
  \label{alg:wcc:line:update_x}

  \STATE $y_{t+1}^{k} = \arg\min_{y \in Y} - y^\top \mathcal G^{k}_{y,t} + \frac{1}{2\eta_y^k} \| y - y_{t}^k \|^2 $

\ENDFOR
\label{alg:wcc:line:inner1}

\STATE $x_{0}^{k+1} = \xbar_{k} = \frac{1}{T} \sum_{t=0}^{T-1} x_{t}^{k}$,  $y_{0}^{k+1} = \ybar_{k} = \frac{1}{T} \sum_{n=0}^{T-1} y_{t}^{k}$
\label{alg:wcc:line:average_solution}

\ENDFOR

%\STATE Return $\xbar_{\tau}$ by $\tau$ randomly sampled from $\{ 1, ..., K \}$.
\STATE Return $x_0^\tau$ by $\tau$ randomly sampled from $\{ 1, ..., K \}$.
\end{algorithmic}
\end{algorithm}
In this subsection, we present the convergence results for solving WCSC problems, where the objective function $f(x, y)$ in (\ref{eq:objective}) is $\rho$-weakly convex in $x$ and $\lambda$-strongly concave in $y$.
The proposed Epoch-GDA algorithm for WCSC min-max problems is summarized in Algorithm \ref{alg:wcc}. As our Algorithm \ref{alg:RSPD_scsc}, Algorithm \ref{alg:wcc} consists of a number of epochs.
As shown in Line \ref{alg:wcc:line:inner0} to Line \ref{alg:wcc:line:inner1}, each epoch performs primal-dual updates on $x$ and $y$.
When updating $x$ at the $k$-th stage, an additional regularizer $\frac{\gamma}{2} \| x - x_0^k \|^2$ is added, where the value  $\gamma = 2 \rho$.
The added term is used to handle the weak convexity condition.
After an epoch ends, average solutions of both $x$ and $y$ are restarted as the initial ones for the next epoch.
The step sizes for updating $x$ and $y$ are set to $O(1/(\rho k))$ and $O(1/(\lambda k))$ at the $k$-th epoch, respectively. %First of all, we introduce a number of important notations.
% Let $P(x) = \max_{y \in Y} f(x, y)$. % this definition has been moved to introduction
If we define $\hat f_k(x, y) = f(x, y) + \frac{ \gamma }{ 2 } \| x - x_0^k \|^2$, we can see that  $\hat f_k(x, y)$ is $\rho$-strongly convex in $x$ and $\lambda$-strongly concave in $y$, since $f(x, y)$ is $\rho$-weakly convex and $\gamma = 2 \rho$.
Indeed, for each inner loop of Algorithm \ref{alg:wcc}, we actually work on the SCSC problem $\min_{x \in X} \max_{y \in Y} \hat f_k(x, y)$.

It is worth mentioning the key difference between our algorithm and the recently proposed stochastic algorithm PG-SMD~\cite{hassan18nonconvexmm} for WCSC problems with a  special structural  objective function.
%$\min_x \max_y g(x) + \frac{1}{n} \sum_{i=1}^n y^\top c_i(x) - r(y)$.
PG-SMD also consists of two loops.
For each inner loop, it runs the same updates with the added regularizer on $x$ as Algorithm \ref{alg:wcc}.
It restarts $x$ by averaging the solutions over the inner loop, like our $\bar x_k$,
but restarts $y$ by taking the deterministic maximization of (\ref{eq:objective}) over $y$ given $\bar x_k$, leading to an additional $O(n)$ computational complexity per epoch.
In addition, PG-SMD sets $\eta_y^k = O( 1 / (\gamma \lambda^2 k ) )$.
Although Algorithm \ref{alg:wcc} shares similar updates to PG-SMD, our analysis yields stronger results under weaker assumptions --- the same iteration complexity $\tilde O(1/\epsilon^4)$ without deterministic updates for $y$ and special structure in the objective function.
This is due to our sharper analysis that makes use of  the telescoping sum based on the duality gap of $\hat f_k$ instead of the primal objective gap. %and $\widehat \Gap_k(\bar x_k, \bar y_k)$ to construct the telescoping sum over $\widehat \Gap_k(x_0^k, y_0^k)$.
% Key idea of proof: 
% 1) construct \hat f_k, which is SCSC and has a saddle point solution
% 2) connect duality gap to nearly stationary point

The convergence result of Algorithm \ref{alg:wcc} that achieves a nearly $\epsilon$-stationary point with $\tilde O(1/\epsilon^4)$ iteration complexity is summarized below.
\begin{thm}
\label{thm:wcc_convergence}
Suppose Assumption \ref{ass:3} holds.
Algorithm \ref{alg:wcc} guarantees 
%\begin{align*}
$\E [\dist (0, \partial P(\hat x^*_\tau))^2]
\leq \gamma^2 \E [ \| \hat x^*_\tau - x_0^\tau \|^2 ]
\leq \epsilon^2$
%\end{align*}
after $K = \max \left\{ \frac{1696 \gamma (\frac{2 M_1^2}{\rho} + \frac{M_2^2}{\lambda} ) }{ \epsilon^2 } \ln( \frac{1696 \gamma (\frac{2 M_1^2}{\rho} + \frac{M_2^2}{\lambda} ) }{ \epsilon^2 }), 
\frac{ 1376 \gamma \epsilon_0 }{5 \epsilon^2} \right\}$ epochs, where  $\tau$ is randomly sampled from $\{ 1, ..., K \}$ and $(\hat x^*_k, \hat y^*_k)$ is the saddle-point of $f_k(x, y)$.
The total number of iteration is $\sum_{k=1}^{K} T_k = \tilde O(\frac{1}{\epsilon^4})$.
\end{thm}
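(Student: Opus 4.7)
The plan is to proceed in four stages: an inner-loop duality-gap bound for the SCSC subproblem $\hat{f}_k(x,y):=f(x,y)+\frac{\gamma}{2}\|x-x_0^k\|^2$, a strong-convexity conversion from duality gap to distance, a Moreau-envelope telescoping across epochs, and final parameter tuning to extract the $\tilde O(1/\epsilon^4)$ complexity.

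First, since $\gamma=2\rho$, the function $\hat{f}_k$ is $\rho$-strongly convex in $x$ and $\lambda$-strongly concave in $y$. Each epoch of Algorithm~\ref{alg:wcc} is exactly SGDA applied to the SCSC problem $\min_x\max_y\hat{f}_k(x,y)$ with constant step sizes $\eta_x^k=\Theta(1/(\rho k))$, $\eta_y^k=\Theta(1/(\lambda k))$ and $T_k=\Theta(k)$ inner iterations. I would invoke the same key lemma that underlies Theorem~\ref{thm:tot_iter_scsc} --- the one that lets the random reference pair $(\hat{x}_k(\bar{y}_k),\hat{y}_k(\bar{x}_k))$ be absorbed into the SGDA inequality --- together with the second-moment bounds $M_1^2,M_2^2$ of Assumption~\ref{ass:3}(2), to conclude that
\begin{align*}
\E[\Gap_{\hat{f}_k}(\bar{x}_k,\bar{y}_k)]\ \leq\ O\!\left(\frac{M_1^2/\rho+M_2^2/\lambda}{k+1}\right).
\end{align*}

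Next, because $\hat{f}_k$ is SCSC with unique saddle $(\hat{x}_k^*,\hat{y}_k^*)$, the duality gap dominates $\frac{\rho}{2}\|\bar{x}_k-\hat{x}_k^*\|^2+\frac{\lambda}{2}\|\bar{y}_k-\hat{y}_k^*\|^2$, hence $\E[\|\bar{x}_k-\hat{x}_k^*\|^2]=O((M_1^2/\rho^2+M_2^2/(\rho\lambda))/(k+1))$. I would then telescope the Moreau envelope $\Phi_k:=P(\hat{x}_k^*)+\frac{\gamma}{2}\|\hat{x}_k^*-x_0^k\|^2$. Since $\hat{x}_{k+1}^*$ minimizes $P(x)+\frac{\gamma}{2}\|x-\bar{x}_k\|^2$ (using $x_0^{k+1}=\bar{x}_k$), one gets $\Phi_{k+1}\leq P(\hat{x}_k^*)+\frac{\gamma}{2}\|\hat{x}_k^*-\bar{x}_k\|^2$, which rearranges to
\begin{align*}
\frac{\gamma}{2}\|\hat{x}_k^*-x_0^k\|^2\ \leq\ \Phi_k-\Phi_{k+1}+\frac{\gamma}{2}\|\hat{x}_k^*-\bar{x}_k\|^2.
\end{align*}
Summing over $k=1,\dots,K$, taking expectations, bounding $\Phi_1-\Phi_{K+1}\leq P(x_0)-P^*\leq\epsilon_0$, and inserting the Stage-2 bound yields $\sum_{k=1}^K\E[\|\hat{x}_k^*-x_0^k\|^2]\leq\tfrac{2\epsilon_0}{\gamma}+O(\ln K)\cdot(M_1^2/\rho+M_2^2/\lambda)/\gamma$. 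Averaging over the uniform $\tau\in\{1,\dots,K\}$ and using $\dist(0,\partial P(\hat{x}_\tau^*))\leq\gamma\|\hat{x}_\tau^*-x_0^\tau\|$ gives the theorem: setting each term of the average to be at most $\epsilon^2/(2\gamma^2)$ produces $K=\tilde O(1/\epsilon^2)$, and since $T_k=\Theta(k)$ the total iteration budget is $\sum_k T_k=O(K^2)=\tilde O(1/\epsilon^4)$.

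The principal obstacle is Stage~1. Unlike Algorithm~\ref{alg:RSPD_scsc}, Algorithm~\ref{alg:wcc} does not confine iterates to shrinking balls, so the per-epoch duality-gap bound must be carried out in expectation under the weaker bounded-second-moment Assumption~\ref{ass:3}(2), not in high probability. More importantly, the standard SGDA inequality naturally brings in the random reference points $\hat{x}_k(\bar{y}_k),\hat{y}_k(\bar{x}_k)$, whose distances to $(x_0^k,y_0^k)$ cannot be bounded a priori; re-expressing these distances through the key lemma --- then absorbing them using the $\rho$-strong convexity and $\lambda$-strong concavity of $\hat{f}_k$ together with the specific choice $T_k=\Theta(k),\eta^k=\Theta(1/k)$ --- is the crucial technical step that also makes the telescoping sum in Stage~3 come out clean.
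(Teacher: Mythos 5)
Your three-stage outline captures the right high-level shape (inner-loop SGDA bound via the key lemma, telescoping across epochs, parameter tuning), but Stage~1 contains a genuine gap that the rest of the plan inherits, and the Moreau-envelope telescope in Stage~3 does not by itself close it.

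The problem is your claimed clean decay $\E[\hatGap_k(\bar x_k,\bar y_k)]\le O\bigl((M_1^2/\rho+M_2^2/\lambda)/(k+1)\bigr)$. When you run SGDA on $\hat f_k$ and then absorb the random reference distances $\|\hat x_k(\bar y_k)-x_0^k\|^2,\ \|\hat y_k(\bar x_k)-y_0^k\|^2$ using Lemma~\ref{lemma:key_lemma}, the key lemma returns \emph{both} $\hatGap_k(\bar x_k,\bar y_k)$ \emph{and} $\hatGap_k(x_0^k,y_0^k)$ on the right-hand side. With the paper's parameter choices, the coefficients are constant in $k$, so after moving the $\hatGap_k(\bar x_k,\bar y_k)$ piece to the left you are left with $\E[\hatGap_k(\bar x_k,\bar y_k)]\le O(1/(k+1))+c\,\E[\hatGap_k(x_0^k,y_0^k)]$ for a fixed constant $c>0$. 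The residual $\E[\hatGap_k(x_0^k,y_0^k)]$ is a per-epoch $O(1)$ quantity, not a vanishing one: it is the duality gap of the \emph{new} regularized problem $\hat f_k$ evaluated at the restart point, and there is no a priori reason for it to shrink. Consequently your Stage~2 estimate $\E[\|\bar x_k-\hat x_k^*\|^2]=O(1/(k+1))$ also fails, and Stage~3 would at best yield $\sum_{k\le K}\E[\|\hat x_k^*-x_0^k\|^2]=O(K)$ rather than $O(\ln K)$, which kills the final rate.

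The paper escapes this by not trying to make the per-epoch gap small. Instead, Lemma~\ref{lemma:lhs_hat_gap} lower-bounds $\hatGap_k(\bar x_k,\bar y_k)$ by a sum of \emph{three} terms: a constant fraction of $\hatGap_{k+1}(x_0^{k+1},y_0^{k+1})$, a constant fraction of $P(x_0^{k+1})-P(x_0^k)$, and $\frac{\gamma}{80}\|x_0^k-\hat x_k^*\|^2$. Pairing this with the SGDA+key-lemma upper bound, the non-decaying $\hatGap_k(x_0^k,y_0^k)$ residual becomes $\hatGap_k(x_0^k,y_0^k)-\hatGap_{k+1}(x_0^{k+1},y_0^{k+1})$ --- a telescoping term --- \emph{simultaneously} with the $P$-telescope and the distance you want. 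Your Moreau-envelope telescoping $\Phi_k-\Phi_{k+1}$ is close in spirit to the $P(x_0^k)-P(x_0^{k+1})$ telescope (and is a perfectly reasonable variant of it), but on its own it supplies only one of the two telescoping channels. You would additionally need something playing the role of Lemma~\ref{lemma:lhs_hat_gap}'s first inequality, which relates $\hatGap_k(\bar x_k,\bar y_k)$ to $\hatGap_{k+1}(x_0^{k+1},y_0^{k+1})$ across the change of regularization center, to cancel the $O(1)$ residual. That comparison across consecutive regularized problems is the missing ingredient.
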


\begin{remark}
% {\bf Remark:}
Theorem \ref{thm:wcc_convergence} shows that the iteration complexity of Algorithm \ref{alg:wcc} to attain an $\epsilon$-nearly stationary point is $\tilde O(1/\epsilon^4)$.
It improves the result of \cite{hassan18nonconvexmm} for WCSC problems in terms of two aspects.
First, \cite{hassan18nonconvexmm} requires a stronger condition on the structure of the objective function,
while our analysis simply assumes a general objective function $f(x, y)$.
Second, \cite{hassan18nonconvexmm} requires to solve the maximization over $y$ at each epoch,
which may introduce an $O(n)$ computational complexity for $y \in \mathbb R^n$~\footnote{Although the exact maximization over $y$ for restarting next epoch might be solved approximately, it still requires additional overhead.}.
In contrast, our algorithm restarts both the primal variable $x$ and dual variable $y$ at each epoch, which does not need an additional cost. 

Finally, we note that when $f(x, y)$ is smooth in $x$ and $y$, we can use stochastic Mirror Prox algorithm~\cite{juditsky2011solving} to replace the stochastic gradient descent ascent updates (Step 6 and Step 7) such that we can use a bounded variance assumption of the stochastic gradients instead of bounded second-order moments. It is a simple exercise to finish the proof by following our analysis of Theorem~\ref{thm:wcc_convergence}. 
\end{remark}

\section{Analysis}
In this section, we present the proof of Theorem~\ref{thm:tot_iter_scsc} and a proof sketch of Theorem~\ref{thm:wcc_convergence}.  As we mentioned at the introduction, the key challenge in the analysis of Epoch-GDA lies in handling the variable distance measure $\| \hat x(y_1) - x_0 \|^2 + \| \hat y(x_1) - y_0 \|^2$ for any $(x_0, y_0)\in X\times Y$ and $(x_1, y_1)\in X\times Y$ and its connection to the duality gaps, where $\hat x(y_1)=\arg\min_{x'\in X}f(x', y_1)$ and $\hat y(x_1) = \arg\max_{y'\in Y}f(x_1, y')$.  Hence, we first introduce the following key lemma that is useful in the analysis of Epoch-GDA for both SCSC and WCSC problems. It connects the variable distance measure $\| \hat x(y_1) - x_0 \|^2 + \|\hat y(x_1) - y_0 \|^2$ to the duality gaps at $(x_0, y_0)$ and $(x_1, y_1)$. %which is the key difficulty when analyzing duality gaps for min-max problems.
\begin{lem}
\label{lemma:key_lemma}
Consider the following  $\mu$-strongly convex in $x$ and $\lambda$-strongly concave problem
%\begin{align*}
$\min_{x \in \Omega_1} \max_{y \in \Omega_2} f(x, y)$. 
%\end{align*}
%where $f(x, y)$ is $\mu$-strongly convex in $x$ and $\lambda$-strongly concave function in $y$.
Let $(x^*, y^*)$ denote the saddle point solution to this problem.
Suppose we have two solutions $(x_0, y_0) \in \Omega_1 \times \Omega_2$ and $(x_1, y_1) \in \Omega_1 \times \Omega_2$.
Then the following relation between variable distance and duality gaps holds
\begin{align}
\label{eq:key_lemma}
\frac{\mu}{4} \| \hat x(y_1) - x_0 \|^2
+ \frac{\lambda}{4} \| \hat y(x_1) - y_0 \|^2
\leq &
\max_{y' \in \Omega_2} f(x_0, y') - \min_{x' \in \Omega_1} f(x', y_0)
\nonumber\\
+ &
\max_{y' \in \Omega_2} f(x_1, y') - \min_{x' \in \Omega_1} f(x', y_1)  .
\end{align}
\end{lem}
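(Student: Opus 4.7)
The plan is to invoke strong convexity/concavity directly at the relevant inner minimizer/maximizer, then trivially bound the resulting evaluations of $f$ by the corresponding $\max_{y'}$ or $\min_{x'}$.

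Concretely, I would first apply $\mu$-strong convexity of $f(\cdot, y_1)$ around its minimizer $\hat x(y_1) = \arg\min_{x'\in\Omega_1} f(x',y_1)$. Since the first-order optimality condition at $\hat x(y_1)$ kills the linear term when tested against $x_0 - \hat x(y_1)$, this gives
\begin{equation*}
\tfrac{\mu}{2}\,\|x_0 - \hat x(y_1)\|^2 \;\le\; f(x_0,y_1) - f(\hat x(y_1),y_1).
\end{equation*}
Symmetrically, $\lambda$-strong concavity of $f(x_1,\cdot)$ around its maximizer $\hat y(x_1) = \arg\max_{y'\in\Omega_2} f(x_1,y')$ yields
\begin{equation*}
\tfrac{\lambda}{2}\,\|y_0 - \hat y(x_1)\|^2 \;\le\; f(x_1,\hat y(x_1)) - f(x_1,y_0).
\end{equation*}

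Next I would add the two inequalities and bound each of the four $f$-values by the appropriate extremum: $f(x_0,y_1)\le \max_{y'} f(x_0,y')$, $f(\hat x(y_1),y_1)=\min_{x'} f(x',y_1)$, $f(x_1,\hat y(x_1))=\max_{y'} f(x_1,y')$, and $f(x_1,y_0)\ge \min_{x'} f(x',y_0)$. The crucial step is then to observe that the four resulting max/min terms regroup exactly into the two duality gaps on the right-hand side of (\ref{eq:key_lemma}): the cross pairing $f(x_0,y_1)$ with $f(x_1,y_0)$ is irrelevant because only the \emph{sum} of the gap-like quantities appears, and
\begin{equation*}
\bigl[\max_{y'}\! f(x_0,y')\! -\! \min_{x'}\! f(x',y_1)\bigr] + \bigl[\max_{y'}\! f(x_1,y')\! -\! \min_{x'}\! f(x',y_0)\bigr]
\end{equation*}
coincides with the RHS of (\ref{eq:key_lemma}) by simply reshuffling the $\min_{x'}$ terms. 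Since the constants I obtain are $\mu/2$ and $\lambda/2$, they dominate the claimed $\mu/4$ and $\lambda/4$, so the lemma follows.

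There is essentially no obstacle here: the whole proof is two applications of strong convexity/concavity plus the elementary comparisons of $f$-values against their extremal values. The only thing to be careful about is the cross-matching of arguments (using $\hat x(y_1)$ with test point $x_0$, and $\hat y(x_1)$ with test point $y_0$), so that after summing and using the trivial inequalities the four max/min terms regroup into the two duality gaps $\mathrm{Gap}(x_0,y_0)$ and $\mathrm{Gap}(x_1,y_1)$ appearing on the right-hand side.
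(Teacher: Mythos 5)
Your proof is correct, and it is in fact simpler and sharper than the paper's. The paper proves the lemma by splitting $\frac{\mu}{4}\|\hat x(y_1) - x_0\|^2 \leq \frac{\mu}{2}\|\hat x(y_1) - x^*\|^2 + \frac{\mu}{2}\|x^* - x_0\|^2$ (and likewise for $y$), then invoking strong convexity/concavity twice on each half around the two anchor points $\hat x(y_1)$ and $x^*$ (resp.\ $\hat y(x_1)$ and $y^*$), and finally chaining saddle-point inequalities at $(x^*, y^*)$ to regroup everything into the two duality gaps; the factor $\frac{1}{4}$ is precisely the price paid for routing through the saddle point. You instead apply strong convexity only once, directly at $\hat x(y_1)$ tested against $x_0$, and strong concavity once at $\hat y(x_1)$ tested against $y_0$, then observe that $f(x_0,y_1) \leq \max_{y'} f(x_0, y')$, $f(\hat x(y_1),y_1) = \min_{x'} f(x',y_1)$, $f(x_1,\hat y(x_1)) = \max_{y'} f(x_1,y')$, and $f(x_1,y_0) \geq \min_{x'} f(x',y_0)$. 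The resulting four max/min terms sum to exactly the right-hand side of the lemma (the pairing of max with min is immaterial since only the total appears), so you obtain the bound with the larger constants $\frac{\mu}{2}$ and $\frac{\lambda}{2}$, which of course dominate the claimed $\frac{\mu}{4}$ and $\frac{\lambda}{4}$. Your route never references the saddle point $(x^*, y^*)$ at all, uses half as many strong-convexity applications, and gives a strictly stronger inequality; the only thing it buys the paper to go through $(x^*, y^*)$ is nothing, so your proof is a genuine improvement on the argument (though not on the lemma as stated, since the paper only claims the weaker constants).
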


\subsection{Proof of Theorem~\ref{thm:tot_iter_scsc} for the SCSC setting}

The key idea is to first show the convergence of the duality gap with respect to the ball constraints $\calB(x_0^k, R_k)$ and $\calB(y_0^k, R_k)$ in an epoch (Lemma \ref{lemma:convergence_RSPDsc_per_stage}).
Then we investigate the condition to make $\hat x(\bar y_k) \in \calB(x_0^k, R_k)$ and $\hat y(\bar x_k) \in \calB(y_0^k, R_k)$ given the average solution $(\bar x_k, \bar y_k)$, which allows us to derive the duality gap $\Gap(\bar x_k, \bar y_k)$ for the original problem.
Finally, under such conditions, we show how the duality gap between two consecutive outer loops can be halved (Theorem~\ref{thm:connected_PD_gap_scsc}),
which implies the total iteration complexity (Theorem~\ref{thm:tot_iter_scsc}). Below, we omit superscript $k$ when it applies to all epochs.

\begin{lem}\label{lemma:convergence_RSPDsc_per_stage}
Suppose Assumption~\ref{ass:2} holds. 
Let Line \ref{alg1_modified:line:pd_sa_inner_loop_start} to \ref{alg1_modified:line:pd_sa_inner_loop_end} of Algorithm~\ref{alg:RSPD_scsc} run for $T$ iterations (omitting the $k$-index)
by fixed step sizes $\eta_x$ and $\eta_y$. 
Then with the probability at least $1 - \tildedelta$ where $0 < \tildedelta < 1$, 
for any $x \in X \cap \mathcal B(x_0, R)$ and $y \in Y \cap \mathcal B(y_0, R)$,  $\bar x = \sum_{t=0}^{T-1} x_{t}/T$,  $\bar y = \sum_{t=0}^{T-1} y_{t}/T$ satisfy
\begin{align}
\label{eq1:convergence_per_stage}
       f(\bar x, y) - f(x, \bar y) 
\leq & \frac{ \| x - x_{0} \|^{2} }{ \eta_x T } 
       + \frac{ \| y - y_{0} \|^{2} }{ \eta_y T } 
       + \frac{ \eta_x B_1^2 + \eta_y B_2^2  }{ 2 } ( 5 + 3 \log(1/\tilde \delta) )
     \nonumber\\
     & + \frac{ 4 ( B_1 + B_2 ) R \sqrt{ 3 \log(1/\tilde\delta) } }{ \sqrt{T} } . 
\end{align}
\end{lem}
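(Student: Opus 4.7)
The plan is to derive the bound by combining a standard primal-dual regret argument with two high-probability concentration steps. Since the lemma only asserts an average-iterate duality gap (with no strong-convexity terms on the right), I treat $f$ as merely convex in $x$ and concave in $y$ during this local analysis; the strong convexity and strong concavity are reserved for the outer-loop recursion.

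First I would apply the one-step Euclidean descent inequality for a projected update. Because $x \in X \cap \mathcal B(x_0, R)$ belongs to the projection set, non-expansiveness gives
$\|x_{t+1} - x\|^2 \le \|x_t - x\|^2 - 2\eta_x\, \mathcal G_{x,t}^\top(x_t - x) + \eta_x^2 \|\mathcal G_{x,t}\|^2$,
and analogously for the dual step. Rearranging and summing over $t = 0,\dots,T-1$ telescopes the distance terms and yields, after dividing by $T$,
\begin{align*}
\frac{1}{T}\sum_{t=0}^{T-1}\!\bigl[\mathcal G_{x,t}^\top(x_t - x) + \mathcal G_{y,t}^\top(y - y_t)\bigr]
\le \frac{\|x-x_0\|^2}{2\eta_x T} + \frac{\|y-y_0\|^2}{2\eta_y T}
+ \frac{\eta_x}{2T}\!\sum_{t}\!\|\mathcal G_{x,t}\|^2 + \frac{\eta_y}{2T}\!\sum_{t}\!\|\mathcal G_{y,t}\|^2.
\end{align*}
Second, decompose each stochastic gradient as $\mathcal G_{x,t} = g_{x,t} + \delta_{x,t}$ with $g_{x,t} \in \partial_x f(x_t,y_t)$ and $\delta_{x,t}$ a conditionally zero-mean noise (similarly for $y$). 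By convexity in $x$ and concavity in $y$, $g_{x,t}^\top(x_t - x) + g_{y,t}^\top(y - y_t) \ge f(x_t,y) - f(x, y_t)$. Jensen's inequality then converts the averaged iterates into $f(\bar x, y) - f(x, \bar y) \le \frac{1}{T}\sum_t [f(x_t, y) - f(x, y_t)]$, leaving me to control the martingale terms $\frac{1}{T}\sum_t \delta_{x,t}^\top(x_t - x)$ and $\frac{1}{T}\sum_t \delta_{y,t}^\top(y - y_t)$ as well as the squared-gradient sums in high probability.

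The squared-gradient piece is handled directly from Assumption~\ref{ass:2}: the sub-Gaussian-squared condition $\E[\exp(\|\mathcal G_x\|^2/B_1^2)] \le e$ implies, via a standard Chernoff argument on $\sum_t \|\mathcal G_{x,t}\|^2$, that with probability at least $1 - \tildedelta/4$ we have $\sum_t \|\mathcal G_{x,t}\|^2 \le T B_1^2 (5 + 3\log(1/\tildedelta))$ (and similarly for the dual), which produces the $\tfrac{\eta_x B_1^2 + \eta_y B_2^2}{2}(5 + 3\log(1/\tildedelta))$ term.

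The main obstacle is the martingale term, which must be controlled uniformly over $(x,y)$ in the two balls rather than at a single point. I would resolve this with the classical auxiliary-sequence trick: introduce virtual iterates $\tilde x_{t+1} = \Pi_{X\cap\mathcal B(x_0,R)}(\tilde x_t + \eta_x \delta_{x,t})$ and $\tilde y_{t+1} = \Pi_{Y\cap\mathcal B(y_0,R)}(\tilde y_t - \eta_y \delta_{y,t})$ initialized at $x_0, y_0$, and split
\begin{align*}
\textstyle \sum_{t}\delta_{x,t}^\top(x_t - x) \;=\; \sum_{t}\delta_{x,t}^\top(x_t - \tilde x_t) \;+\; \sum_{t}\delta_{x,t}^\top(\tilde x_t - x).
\end{align*}
The first sum is a true martingale with increments bounded (in magnitude) by $2R\|\delta_{x,t}\|$, so a Bernstein-type inequality for martingales with sub-Gaussian increments gives $O\bigl(B_1 R \sqrt{T\log(1/\tildedelta)}\bigr)$ with probability $1-\tildedelta/4$; the second sum is bounded by applying the same projection-descent inequality to $\tilde x_t$, producing a $\|x - x_0\|^2/(2\eta_x)$ distance term plus $\tfrac{\eta_x}{2}\sum_t\|\delta_{x,t}\|^2$ which merges into the variance term. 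Combining the $x$ and $y$ martingale bounds and dividing by $T$ yields the $\tfrac{4(B_1+B_2)R\sqrt{3\log(1/\tildedelta)}}{\sqrt T}$ term. The two distance halves from the two descent arguments sum to the $\|x-x_0\|^2/(\eta_x T) + \|y-y_0\|^2/(\eta_y T)$ factor in the stated bound (rather than the customary factor $1/(2\eta T)$). A final union bound over the four high-probability events costs at most $\tildedelta$ total. The only delicate bookkeeping is matching the explicit constants $5$, $3\log(1/\tildedelta)$, and $\sqrt{3\log(1/\tildedelta)}$, which forces the use of the sharpened Bernstein-type inequality for martingales with sub-Gaussian-norm-squared increments.
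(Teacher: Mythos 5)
Your proposal matches the paper's proof in all essentials: both run the projection-descent inequality on the actual iterates, introduce the auxiliary (virtual) sequences driven by the gradient noise to decouple the martingale term from the running $(x,y)$, bound $\sum_t\|\mathcal G_{x,t}\|^2$ via the sub-exponential moment assumption, apply a Bernstein-type martingale inequality with $|U_t|\le 2R(\|\partial_x^t\|+\|\Delta_x^t\|)$ to the $\sum_t\delta_{x,t}^\top(x_t-\tilde x_t)$ term, and take a union bound. The only nit is bookkeeping: the factor $(5+3\log(1/\tilde\delta))$ is not the bound on $\sum_t\|\mathcal G_{x,t}\|^2$ alone (that gives $B_1^2(T+\log(1/\tilde\delta))$) but emerges after merging the squared-noise contribution from the auxiliary descent inequality, which you already acknowledge merges in.
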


\begin{remark}
Lemma \ref{lemma:convergence_RSPDsc_per_stage} is a standard analysis for an epoch of Algorithm \ref{alg:RSPD_scsc}.
The difficulty arises when attempting to plug $x$ and $y$ into (\ref{eq1:convergence_per_stage}).
In order to derive the duality gap on the LHS of (\ref{eq1:convergence_per_stage}), we have to plug in $x \leftarrow \hat x(\bar y)$ and $y \leftarrow \hat y(\bar x)$.
Nevertheless, it is unclear whether $\hat x(\bar y) \in \calB(x_0, R)$ and $\hat y(\bar x) \in \calB(y_0, R)$, which is the requirement for $x$ and $y$ to be plugged into (\ref{eq1:convergence_per_stage}).
In the following lemma, we investigate the condition to make $\hat x(\bar y) \in \calB(x_0, R)$ and $\hat y(\bar x) \in \calB(y_0, R)$ based on Lemma \ref{lemma:key_lemma}.
\end{remark}

\begin{lem}\label{lem:conditions_for_primaldual_gap}
Suppose Assumption~\ref{ass:2} holds. 
% Denote $(x^*, y^*)$ the unique optimal solution of $f(x, y)$,
Let
$\hat{x}_R (y) := \arg\min_{x \in X \cap \mathcal B(x_0, R)} f(x, y)$
and
$\hat{y}_R (x) := \arg\max_{y \in Y \cap \mathcal B(y_0, R)} f(x, y)$.
Assume the initial duality gap 
%$\max_{y \in Y} f(x_0, y) - \min_{x \in X} f(x, y_0) \leq \epsilon_0$.
$\Gap(x_0, y_0) \leq \epsilon_0$.
Let Lines \ref{alg1_modified:line:pd_sa_inner_loop_start} to \ref{alg1_modified:line:pd_sa_inner_loop_end} of Algorithm \ref{alg:RSPD_scsc} run $T$ iterations with
$\tilde \delta \in (0, 1)$,
$R \geq 2 \sqrt{ \frac{ 2 \epsilon_0 }{ \min\{ \mu, \lambda \} } }$,
$\eta_x = \frac{ \min\{ \mu, \lambda \} R^2}{ 40 ( 5 + 3 \log(1 / \tilde \delta) ) B_1^2 }$,
$\eta_y = \frac{ \min\{ \mu, \lambda \} R^2}{ 40 ( 5 + 3 \log(1 / \tilde \delta) ) B_2^2 }$ and
$$T \geq \frac{ \max \Big\{ 320^2 (B_1 + B_2)^2 3 \log(\frac{1}{\tildedelta}), 3200 ( 5 + 3 \log(1 / \tilde \delta) ) \max\{ B_1^2, B_2^2 \} \Big\} }{ \mu^2 R^2 }.$$
Then, with probability at least $1 - \tilde \delta$, it holds
$
\| \hat x_R(\bar y) - x_0 \| < R, 
\| \hat y_R(\bar x) - y_0 \| < R    .
$
\end{lem}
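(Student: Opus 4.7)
The plan is to invoke Lemma~\ref{lemma:key_lemma} on the \emph{restricted} problem whose feasible set is $\Omega_1\times\Omega_2$ with $\Omega_1=X\cap\mathcal{B}(x_0,R)$ and $\Omega_2=Y\cap\mathcal{B}(y_0,R)$. On this restricted domain $f$ remains $\mu$-strongly convex in $x$ and $\lambda$-strongly concave in $y$, and the best-response mappings on $\Omega_1\times\Omega_2$ are, by definition, exactly $\hat{x}_R$ and $\hat{y}_R$. Applying the key lemma to the two points $(x_0,y_0)$ and $(\bar x,\bar y)$ and writing $A=\|\hat{x}_R(\bar y)-x_0\|^2$, $B=\|\hat{y}_R(\bar x)-y_0\|^2$, we obtain
\begin{align*}
\frac{\mu}{4}A+\frac{\lambda}{4}B \leq G_R(x_0,y_0)+G_R(\bar x,\bar y),
\end{align*}
where $G_R(\cdot,\cdot)$ denotes the duality gap of the restricted problem. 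Since both strong-convexity constants are at least $\min\{\mu,\lambda\}$, it suffices to prove $A+B<R^2$.

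The initial restricted gap is immediate: shrinking the feasible sets can only decrease the inner max over $y$ and only increase the inner min over $x$, so $G_R(x_0,y_0)\leq\Gap(x_0,y_0)\leq\epsilon_0$. For the second gap I would apply Lemma~\ref{lemma:convergence_RSPDsc_per_stage} with the specific comparators $x\leftarrow\hat{x}_R(\bar y)$ and $y\leftarrow\hat{y}_R(\bar x)$; these are admissible \emph{by construction}, because $\hat{x}_R(\bar y)\in X\cap\mathcal{B}(x_0,R)$ and $\hat{y}_R(\bar x)\in Y\cap\mathcal{B}(y_0,R)$. With probability at least $1-\tilde\delta$, the per-epoch bound then reads
\begin{align*}
G_R(\bar x,\bar y)\leq \frac{A}{\eta_x T}+\frac{B}{\eta_y T}+\frac{\eta_x B_1^2+\eta_y B_2^2}{2}(5+3\log(1/\tilde\delta))+\frac{4(B_1+B_2)R\sqrt{3\log(1/\tilde\delta)}}{\sqrt{T}}.
\end{align*}

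What remains is a bookkeeping verification that every term on the right is a controlled fraction of $\min\{\mu,\lambda\}\cdot R^2$. The radius condition $R\geq 2\sqrt{2\epsilon_0/\min\{\mu,\lambda\}}$ gives $\epsilon_0\leq\min\{\mu,\lambda\}R^2/8$; the prescribed choices of $\eta_x,\eta_y$ make the step-size noise term equal to $\min\{\mu,\lambda\}R^2/40$; the first branch of the lower bound on $T$ forces the Azuma term to be at most $\min\{\mu,\lambda\}R^2/80$; and the second branch of the lower bound on $T$ combined with the step-size formulas yields $\eta_x T,\eta_y T\geq 80/\min\{\mu,\lambda\}$, so the first two terms together contribute at most $\min\{\mu,\lambda\}(A+B)/80$. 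Substituting into the key-lemma inequality and absorbing the $(A+B)$ contribution to the left,
\begin{align*}
\Bigl(\frac{1}{4}-\frac{1}{80}\Bigr)\min\{\mu,\lambda\}(A+B) \leq \Bigl(\frac{1}{8}+\frac{1}{40}+\frac{1}{80}\Bigr)\min\{\mu,\lambda\}R^2,
\end{align*}
which simplifies to $A+B\leq\tfrac{13}{19}R^2<R^2$ and yields the two strict inequalities claimed.

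The main obstacle, and the reason Lemma~\ref{lemma:key_lemma} is called into service at all, is that the per-epoch bound in Lemma~\ref{lemma:convergence_RSPDsc_per_stage} is valid only when the comparators lie in $\mathcal{B}(x_0,R)\times\mathcal{B}(y_0,R)$, so one cannot directly instantiate it at the \emph{unrestricted} best responses $\hat{x}(\bar y),\hat{y}(\bar x)$. The resolution is to set up a self-referential inequality through the \emph{restricted} best responses: the key lemma turns their squared distances from $(x_0,y_0)$ into two restricted duality gaps, and the per-epoch bound estimates the second of these gaps in terms of the same squared distances. The calibration of $R,\eta_x,\eta_y,T$ is precisely tight enough that the self-reference can be absorbed with room to spare, giving the strict containment $A+B<R^2$.
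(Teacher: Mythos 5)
Your proposal is correct and follows essentially the same route as the paper: apply the key lemma to the ball-restricted problem, bound the restricted gap at $(x_0,y_0)$ by $\epsilon_0$, and bound the restricted gap at $(\bar x,\bar y)$ via Lemma~\ref{lemma:convergence_RSPDsc_per_stage} instantiated at $\hat x_R(\bar y)$ and $\hat y_R(\bar x)$. The only (harmless) difference is bookkeeping: you keep $A=\|\hat x_R(\bar y)-x_0\|^2$ and $B=\|\hat y_R(\bar x)-y_0\|^2$ explicit in the per-epoch bound and absorb them back into the left side to get $A+B\le \tfrac{13}{19}R^2$, whereas the paper uses the trivial bound $A,B\le R^2$ upfront to get $G_R(\bar x,\bar y)\le \min\{\mu,\lambda\}R^2/16$ directly, yielding $A+B\le \tfrac{3}{4}R^2$; both arrive at the strict containment.
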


\begin{remark}
Lemma~\ref{lem:conditions_for_primaldual_gap} shows that if we properly set the values of $R$, $\eta_x$, $\eta_y$ and $T$, then $\hat x_R(\bar y)$ and $\hat y_R(\bar x)$ are the interior points of $\calB(x_0, R)$ and $\calB(y_0, R)$ with high probability.
Therefore, we conclude that $\hat x(\bar y) = \hat x_R(\bar y)$ and $\hat y(\bar x) = \hat y_R(\bar x)$ with probability $1 - \tilde \delta$ under the conditions of Lemma \ref{lem:conditions_for_primaldual_gap},
which allows us to derive the duality gap in LHS of (\ref{eq1:convergence_per_stage}) of Lemma \ref{lemma:convergence_RSPDsc_per_stage}.
\end{remark}

% \yanred{
% We would like to emphasize that $\hat x(\bar y) \in \calB(x_0, R)$ and $\hat y(\bar x) \in \calB(y_0, R)$ have to be confirmed in high probability, rather than in expectation.
% If we show $\E[ \| \hat x(\bar y) - x_0 \| ] < R$, it is still unclear $\hat x(\bar y) \in \calB(x_0, R)$, as pointed in \cite{zhao2019optimal}.
% }
The following theorem gives the relation of duality gaps between two consecutive epochs of Algorithm~\ref{alg:RSPD_scsc} by using Lemma \ref{lemma:convergence_RSPDsc_per_stage} and the conditions proven by Lemma~\ref{lem:conditions_for_primaldual_gap}.

\begin{thm}\label{thm:connected_PD_gap_scsc}
Consider the $k$-th epoch of Algorithm~\ref{alg:RSPD_scsc} with an initial solution $(x_0^k, y_0^k)$ and the ending averaged solution $(\bar x_k, \bar y_k)$. 
Suppose Assumption~\ref{ass:2} holds and $\Gap (x_0^k, y_0^k) \leq \epsilon_{k-1}$.
Let 
$R_k \geq 2 \sqrt{ \frac{ 2 \epsilon_{k-1} }{ \min\{ \mu, \lambda \} } }$ (i.e. $\epsilon_{k-1} \leq \frac{ \min\{ \mu, \lambda \} R_k^2 }{ 8 }$),
$\eta_x^k = \frac{ \min\{ \mu, \lambda \} R_k^2 }{ 40 ( 5 + 3 \log(1/\tilde \delta) ) B_1^2 }$,
$\eta_y^k = \frac{ \min\{ \mu, \lambda \} R_k^2 }{ 40 ( 5 + 3 \log(1/\tilde \delta) ) B_2^2 }$ and
$$T_k \geq \frac{ \max \Big\{ 320^2 (B_1 + B_2)^2 3 \log( 1 / \tilde \delta ), 3200 ( 5 + 3 \log(1/\tilde \delta) ) \max\{ B_1^2, B_2^2 \} \Big\} }{ \min\{\mu, \lambda\}^2 R_k^2 }.$$
Then we have with probability $1 - \tildedelta$,
$
\Gap (\bar x_k, \bar y_k)
\leq
\frac{ \min\{ \mu, \lambda \} R_k^2}{16} $. 
\end{thm}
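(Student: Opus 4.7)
The plan is to show that once the parameters $R_k$, $\eta_x^k$, $\eta_y^k$ and $T_k$ are tuned as stated, the averaged iterates $(\bar x_k, \bar y_k)$ satisfy a per-stage duality-gap bound that reduces the initial ball size $R_k^2$ to $R_k^2/16$ (up to the strong convexity constant). The argument proceeds in three steps, chaining the two lemmas established earlier in Section~4.

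First I would invoke Lemma~\ref{lem:conditions_for_primaldual_gap}. Under the assumed hypotheses $\Gap(x_0^k, y_0^k) \leq \epsilon_{k-1} \leq \min\{\mu,\lambda\}R_k^2/8$, together with the prescribed $\eta_x^k$, $\eta_y^k$, $T_k$, the lemma guarantees with probability at least $1-\tilde\delta$ that the restricted best responses lie strictly inside the balls, i.e.\ $\|\hat x_{R_k}(\bar y_k) - x_0^k\| < R_k$ and $\|\hat y_{R_k}(\bar x_k) - y_0^k\| < R_k$. Because the ball constraints are inactive at these minimizers/maximizers, by a standard first-order optimality argument we conclude $\hat x_{R_k}(\bar y_k) = \hat x(\bar y_k)$ and $\hat y_{R_k}(\bar x_k) = \hat y(\bar x_k)$. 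This is the crucial bridge that lets us reason about the \emph{unconstrained} duality gap using an inequality that a priori only holds for points in the balls.

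Next I would apply Lemma~\ref{lemma:convergence_RSPDsc_per_stage} with the admissible test points $x \leftarrow \hat x(\bar y_k)$ and $y \leftarrow \hat y(\bar x_k)$. The left-hand side of~(\ref{eq1:convergence_per_stage}) then becomes $f(\bar x_k, \hat y(\bar x_k)) - f(\hat x(\bar y_k), \bar y_k) = \Gap(\bar x_k, \bar y_k)$, so we obtain
\begin{align*}
\Gap(\bar x_k, \bar y_k)
&\leq \frac{R_k^2}{\eta_x^k T_k} + \frac{R_k^2}{\eta_y^k T_k}
      + \frac{\eta_x^k B_1^2 + \eta_y^k B_2^2}{2}\bigl(5 + 3\log(1/\tilde\delta)\bigr) \\
&\quad + \frac{4(B_1+B_2)R_k\sqrt{3\log(1/\tilde\delta)}}{\sqrt{T_k}},
\end{align*}
where I used $\|\hat x(\bar y_k)-x_0^k\|\leq R_k$ and $\|\hat y(\bar x_k)-y_0^k\|\leq R_k$ from Step~1.

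Finally I would plug in the chosen values of $\eta_x^k$, $\eta_y^k$, $T_k$ and verify term by term. A direct computation gives $\frac{R_k^2}{\eta_x^k T_k} + \frac{R_k^2}{\eta_y^k T_k} \leq \frac{\min\{\mu,\lambda\}R_k^2}{40}$ using the lower bound on $T_k$ from the $3200(5+3\log(1/\tilde\delta))\max\{B_1^2,B_2^2\}$ branch; the variance term evaluates exactly to $\frac{\min\{\mu,\lambda\}R_k^2}{40}$ by cancellation with the denominator in $\eta_x^k,\eta_y^k$; and the $\sqrt{T_k}$ term is at most $\frac{\min\{\mu,\lambda\}R_k^2}{80}$ using the $320^2(B_1+B_2)^2 \cdot 3\log(1/\tilde\delta)$ branch of the $T_k$ bound. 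Summing the three contributions gives $\frac{2+2+1}{80}\min\{\mu,\lambda\}R_k^2 = \frac{\min\{\mu,\lambda\}R_k^2}{16}$, which is the desired inequality.

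The main conceptual obstacle is really concentrated in Step~1: without the interior-point certificate coming from Lemma~\ref{lem:conditions_for_primaldual_gap} (which itself relies on the key distance-to-gap Lemma~\ref{lemma:key_lemma}), the unconstrained best responses $\hat x(\bar y_k), \hat y(\bar x_k)$ need not be admissible test points in Lemma~\ref{lemma:convergence_RSPDsc_per_stage}, and one could at best bound a restricted gap that does not telescope across epochs. Everything after that point is a careful but routine bookkeeping of the four terms with the specific parameter choices.
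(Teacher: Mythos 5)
Your proof is correct and follows essentially the same route as the paper: invoke Lemma~\ref{lem:conditions_for_primaldual_gap} to certify that $\hat x(\bar y_k)$ and $\hat y(\bar x_k)$ lie inside the balls, substitute them as test points in Lemma~\ref{lemma:convergence_RSPDsc_per_stage}, and verify that the chosen $\eta_x^k,\eta_y^k,T_k$ make the four terms sum to $\min\{\mu,\lambda\}R_k^2/16$. Your explicit $\tfrac{1}{40}+\tfrac{1}{40}+\tfrac{1}{80}=\tfrac{1}{16}$ bookkeeping matches the paper's inequality $(b)$ exactly; the only cosmetic difference is that the paper writes the per-stage bound first and then invokes the interior-point lemma, whereas you reverse the order.
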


\begin{remark}
% {\bf Remark:}
Theorem \ref{thm:connected_PD_gap_scsc} shows that after running $T_k$ iterations at the $k$-th stage, the upper bound of the duality gap would be halved with high probability, i.e., from $\frac{ \min\{ \mu, \lambda \} R_k^2}{8}$ to $\frac{ \min\{ \mu, \lambda \} R_k^2}{16} $.
Then, in order to make the duality gap of each outer loop of Algorithm \ref{alg:RSPD_scsc} halved from the last epoch, we can simply set $R_{k+1}^2 = \frac{R_k^2}{2}$,
and accordingly, $\eta_{x, k+1} = \frac{\eta_x^k}{2}$, $\eta_{y, k+1} = \frac{\eta_y^k}{2}$ and $T_{k+1} = 2T_k$.
\end{remark}

\begin{proof} % Proof of Theorem 1 (outer loop of SCSC)
(of Theorem \ref{thm:connected_PD_gap_scsc})
% Denote $\bar x_k = \frac{1}{T_k} \sum_{t=0}^{T_k-1} x_t^k$ and $\bar y_k = \frac{1}{T_k} \sum_{t=0}^{T_k-1} y_t^k$.
For any $x \in \calB(x_0^k, R_k)$ and $y \in \calB(y_0^k, R_k)$, we have $\| x - x_0^k \| \leq R$ and $\| y - y_0^k \| \leq R$,
so by (\ref{eq1:convergence_per_stage}) of Lemma \ref{lemma:convergence_RSPDsc_per_stage}, we have with probability $1 - \tilde \delta$
\begin{align}\label{eq:combined_all3_primaldual_gap}
f(\bar x_k, y) - f(x, \bar y_k) 
%\leq &
%      \frac{|| x - x_{0}^k ||^{2} }{\eta_x^k   T_k  }
%      + \frac{|| y - y_{0}^k ||^{2} }{\eta_y^k T_k  }
%      + \frac{ \eta_x^k B_1^2 }{ 2 } ( 5 + 3 \log(1 / \tilde \delta) ) 
%      \nonumber\\
%      &
%      + \frac{ \eta_y^k B_2^2 }{ 2 } ( 5 + 3 \log(1 / \tilde \delta) ) 
%      + \frac{ 4 ( B_1 + B_2 ) R_k \sqrt{3 \log\frac{1}{\tildedelta}} }{ \sqrt{T_k} } 
%      \nonumber\\
 \stackrel{(a)}{\leq} &
     \frac{R_k^2}{\eta_x^k T_k}
     + \frac{R_k^2}{\eta_y^k T_k}
     + \frac{ \eta_x^k B_1^2 }{ 2 } ( 5 + 3 \log(1 / \tilde \delta) )  
     + \frac{ \eta_y^k B_2^2 }{ 2 } ( 5 + 3 \log(1 / \tilde \delta) )  
     \nonumber\\
     &
     + \frac{ 4 ( B_1 + B_2 ) R_k \sqrt{3 \log ( 1 / \tilde \delta ) } }{ \sqrt{T_k} }
\stackrel{(b)}{\leq}
\frac{ \min \{ \mu, \lambda \} R_k^2}{16}  
,
\end{align}
where inequality $(a)$ is due to $x \in \calB(x_0^k, R_k)$ and $y \in \calB(y_0^k, R_k)$.
Inequality $(b)$ is due to the values of $\eta_x^k$, $\eta_y^k$ and $T_k$.
Recall the definitions $\hat x(\bar y_k) = \arg\min_{x \in X} f(x, \bar y_k)$ and $\hat y(\bar x_k) = \arg\max_{y \in Y} f(\bar x_k, y)$.
By Lemma \ref{lem:conditions_for_primaldual_gap}, we have $\hat x(\bar y_k) \in \calB(x_0^k, R_k)$ and $\hat y(\bar x_k) \in \calB(y_0^k, R_k)$ with probability $1 - \tilde \delta$.
Then from (\ref{eq:combined_all3_primaldual_gap}) we have
\begin{align*}
\Gap(\bar x_k, \bar y_k)
=
\max_{y \in Y} f(\bar x_k, y) - \min_{x \in X} f(x, \bar y_k) 
\leq
\frac{ \min \{ \mu, \lambda \} R_k^2}{16} .
\end{align*}
\end{proof}
% \textcolor{red}{(I think the following sentence should be moved outside the proof.)}
Given the condition 
$
\Gap (x_0^k, y_0^k)
\leq
\epsilon_{k-1}
\leq
\frac{ \min\{ \mu, \lambda \} R_k^2}{8}  ,
$
we then conclude that running $T_k$ iterations in an epoch of Algorithm~\ref{alg:RSPD_scsc} would halve the duality gap with high probability.
As indicated in Theorem \ref{thm:connected_PD_gap_scsc}, the duality gap $\Gap (\bar x_k, \bar y_k)$ can be halved as long as the condition of Theorem \ref{thm:connected_PD_gap_scsc} holds. 
% i.e., $\Gap(\bar x_k, \bar y_k) \leq \frac{ \Gap(\bar x_{k-1}, \bar y_{k-1}) }{ 2 }$.
Then Theorem \ref{thm:tot_iter_scsc} is implied (the detailed proof is in Supplementary Materials).
% The remaining proof can be found in the appendix.

\subsection{Proof Sketch of Theorem~\ref{thm:wcc_convergence} for the WCSC setting}

Due to limit of space, we only present a sketch here and present the full proof in the Supplement.  Recall $\hat f_k(x, y) = f(x, y) + \frac{ \gamma }{ 2 } \| x - x_0^k \|^2$. Let us denote its duality gap by $\widehat \Gap_k(x, y) = \hat f_k(x, \hat y_k(x)) - \hat f_k(\hat x_k(y), y)$, 
where we define $\hat y_k(x) := \arg\max_{y' \in Y} \hat f_k(x, y')$ given $x \in X$ and $\hat x_k(y) := \arg\min_{x' \in X} \hat f_k(x', y)$ given $y \in Y$.
Its saddle point solution is  denoted by $(\hat x^*_k, \hat y^*_k)$, i.e., $\hat f_k(\hat x^*_k, y) \leq \hat f_k(\hat x^*_k, \hat y^*_k) \leq \hat f_k(x, \hat y^*_k)$ for any $x \in X$ and $y \in Y$.
The key idea of our analysis is to connect the duality gap $\widehat \Gap_k(x_0^k , y_0^k)$ to $\gamma^2 \| \hat x_k^* - x_0^k \|^2$, 
and then by making $\gamma^2 \| \hat x_k^* - x_0^k \|^2 \leq \epsilon^2$, we can show that $x_0^k$ is a nearly $\epsilon$-stationary point.
To this end we first establish a bound of the duality gap for the regularized problem $\hat f_k(x, y)$ for the $k$-th epoch (Lemma \ref{lemma:inner_loop_wcc}).
 Then we connect  it to $\gamma \| \hat x_k^* - x_0^k \|^2$ (Lemma \ref{lemma:lhs_hat_gap}). Finally, we bound $\gamma \| \hat x_k^* - x_0^k \|^2$  by a telescoping sum of $\E [ \hatGap_k(x_0^k, y_0^k) ] - \E [ \hatGap_{k+1}(x_0^{k+1}, y_0^{k+1}) ] $ and $\E[P(x_0^k) - P(x_0^{k+1})]$.

\section{Conclusions}

In this paper, we filled the gaps between stochastic min-max and minimization optimization problems.
We proposed Epoch-GDA algorithms for general SCSC and general WCSC problems,
which do not impose any additional assumptions on the smoothness or the structure of the objective function.
Our key lemma provides sharp analysis of Epoch-GDA for both problems.
For SCSC min-max problems, to the best of our knowledge, our result is the first one to show that Epoch-GDA achieves the optimal rate of $O(1/T)$ for the duality gap of general SCSC min-max problems. 
For WCSC min-max problems, our analysis allows us to derive the best complexity $\tilde O(1/\epsilon^4)$ of Epoch-GDA to reach a nearly $\epsilon$-stationary point, which does not require smoothness, large mini-batch sizes or other structural conditions.

% restart the primal and dual updates with averaged solution from the previous stage.
% Our analysis derives a fast rate of $O(1/T)$ for the {\it duality gap} of general strongly-convex strongly-concave min-max problems, which matches the rate of strongly-convex minimization problems.
% To the best of our knowledge, we are the first time to establish such rate for finding the duality gap without assuming any special structure of objective function and deterministic udpates.
% For weakly-convex strongly-concave min-max problems, we derive an iteration complexity of $\tilde O(1/\epsilon^4)$ for finding $\epsilon$-nearly stationary point.
% Our analysis retains the same iteration complexity as that of weakly-convex minimization problems for reaching $\epsilon$-stationary point,
% and improves the existing result of min-max problems by eliminating the requirements of the special structure of the objective function, deterministic updates and large mini-batch sizes.

\bibliographystyle{plain}
\bibliography{all,spd}

\newpage
\appendix

\section{Proof of Theorem~\ref{thm:wcc_convergence} for the WCSC setting}
 Recall $\hat f_k(x, y) = f(x, y) + \frac{ \gamma }{ 2 } \| x - x_0^k \|^2$. Let us denote its duality gap by $\widehat \Gap_k(x, y) = \hat f_k(x, \hat y_k(x)) - \hat f_k(\hat x_k(y), y)$, 
where we define $\hat y_k(x) := \arg\max_{y' \in Y} \hat f_k(x, y')$ given $x \in X$ and $\hat x_k(y) := \arg\min_{x' \in X} \hat f_k(x', y)$ given $y \in Y$.
Its saddle point solution is  denoted by $(\hat x^*_k, \hat y^*_k)$, i.e., $\hat f_k(\hat x^*_k, y) \leq \hat f_k(\hat x^*_k, \hat y^*_k) \leq \hat f_k(x, \hat y^*_k)$ for any $x \in X$ and $y \in Y$.
The key idea of our analysis is to connect the duality gap $\widehat \Gap_k(x_0^k , y_0^k)$ to $\gamma^2 \| \hat x_k^* - x_0^k \|^2$, 
and then by making $\gamma^2 \| \hat x_k^* - x_0^k \|^2 \leq \epsilon^2$, we can show that $x_0^k$ is a nearly $\epsilon$-stationary point.
To this end we first establish a bound of the duality gap for the regularized problem $\hat f_k(x, y)$ for the $k$-th epoch (Lemma \ref{lemma:inner_loop_wcc}).
 Then we connect  it to $\gamma \| \hat x_k^* - x_0^k \|^2$ (Lemma \ref{lemma:lhs_hat_gap}). Finally, we bound $\gamma \| \hat x_k^* - x_0^k \|^2$  by a telescoping sum of $\E [ \hatGap_k(x_0^k, y_0^k) ] - \E [ \hatGap_{k+1}(x_0^{k+1}, y_0^{k+1}) ] $ and $\E[P(x_0^k) - P(x_0^{k+1})]$. % Finally, we show how to make $\gamma^2 \E [ \| \hat x_\tau^* - x_0^\tau \|^2 ] \leq \epsilon^2$ for $\tau$ randomly sampled from $\{ 1, ..., K \}$ (Theorem \ref{thm:wcc_convergence}).

\begin{lem}
\label{lemma:inner_loop_wcc}
Suppose Assumption~\ref{ass:3} holds and $\gamma = 2\rho$.
% Let 
% $\hat x_k(\bar y_k) := \arg\min_{x \in X} \hat f_k(x, \bar y_k)$ 
% and 
% $\hat y_k(\bar x_k) := \arg\max_{y \in Y} \hat f_k(\bar x_k, y)$.
For $k \geq 1$, Lines \ref{alg:wcc:line:inner0} to \ref{alg:wcc:line:inner1} of Algorithm \ref{alg:wcc} guarantee
\begin{align}\label{eq:wcc_inner_loop}
&
\E [ \hatGap_k(\bar x_k, \bar y_k) ]
=
\E [ \max_{y \in Y} \hat f_k(\bar{x}_k, y) - \min_{x \in X} \hat f_k(x, \bar{y}_k) ]
=
\E [ \hat f_k(\bar x_k, \hat y_k(\bar x_k)) - \hat f_k(\hat x_k(\bar y_k), \bar y_k) ]
\nonumber\\
& \quad
\leq 
\frac{5 \eta_x^k M_1^2}{2} + \frac{5 \eta_y^k M_2^2}{2}
+ \frac{1}{T_k} \Big\{ ( \frac{1}{\eta_x^k} + \frac{\rho}{2} ) \E [ \| \hat x_k(\bar y_k) - x_0^k \|^2 ]
+ \frac{1}{\eta_y^k} \E [ \| \hat y_k(\bar x_k) - y_0^k \|^2 ] \Big\}  .
\end{align}
\end{lem}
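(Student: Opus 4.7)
The plan is to derive per-iteration three-point inequalities from the optimality conditions of both prox updates, sum them over $t = 0, \ldots, T_k - 1$, use Jensen's inequality to lift the averaged iterates to the duality gap $\hatGap_k(\bar x_k, \bar y_k)$, and finally take expectations to isolate the stated variance plus distance terms.

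The first step would start from the optimality condition of
\[ x_{t+1}^k = \arg\min_{x \in X}\Big\{ x^\top \mathcal G_{x,t}^k + \tfrac{1}{2\eta_x^k}\|x - x_t^k\|^2 + \tfrac{\gamma}{2}\|x - x_0^k\|^2\Big\}, \]
namely $\langle \mathcal G_{x,t}^k + \tfrac{1}{\eta_x^k}(x_{t+1}^k - x_t^k) + \gamma(x_{t+1}^k - x_0^k),\, x - x_{t+1}^k\rangle \geq 0$ for all $x \in X$. Expanding each inner product via $2\langle a-b, c-a\rangle = \|b-c\|^2 - \|a-b\|^2 - \|a-c\|^2$, a Young step on $\mathcal G_{x,t}^{k\top}(x_t^k - x_{t+1}^k)$, and the $\rho$-weak convexity relation $f(x_t^k, y_t^k) - f(x, y_t^k) \leq g_{x,t}^{k\top}(x_t^k - x) + \tfrac{\rho}{2}\|x - x_t^k\|^2$ (then rewritten in terms of $\hat f_k$) would yield
\begin{align*}
\hat f_k(x_t^k, y_t^k) - \hat f_k(x, y_t^k)
& \leq \tfrac{1}{2\eta_x^k}\bigl(\|x_t^k - x\|^2 - \|x_{t+1}^k - x\|^2\bigr)
 + \tfrac{\gamma}{2}\bigl(\|x_t^k - x_0^k\|^2 - \|x_{t+1}^k - x_0^k\|^2\bigr) \\
& \quad - \tfrac{\gamma}{2}\|x_{t+1}^k - x\|^2 + \tfrac{\rho}{2}\|x - x_t^k\|^2 + \tfrac{\eta_x^k}{2}\|\mathcal G_{x,t}^k\|^2 - \delta_{x,t}^{k\top}(x_t^k - x),
\end{align*}
where $\delta_{x,t}^k := \mathcal G_{x,t}^k - \E[\mathcal G_{x,t}^k \mid \mathcal F_t]$. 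The parallel argument for the dual update together with $\lambda$-strong concavity of $\hat f_k$ in $y$ gives the matching per-step bound on $\hat f_k(x_t^k, y) - \hat f_k(x_t^k, y_t^k)$, with a favorable $-\tfrac{\lambda}{2}\|y - y_t^k\|^2$ and no $\gamma$-regularizer contribution.

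Next I would sum both inequalities from $t = 0$ to $T_k - 1$. The $\tfrac{1}{2\eta_x^k}$ and $\tfrac{1}{2\eta_y^k}$ differences telescope to $\tfrac{1}{2\eta_x^k}\|x_0^k - x\|^2$ and $\tfrac{1}{2\eta_y^k}\|y_0^k - y\|^2$, the $\tfrac{\gamma}{2}\|x_t^k - x_0^k\|^2$ differences telescope to zero, and, exploiting $\gamma = 2\rho$, an index-shift gives
\[ \sum_{t=0}^{T_k-1}\Bigl[\tfrac{\rho}{2}\|x - x_t^k\|^2 - \tfrac{\gamma}{2}\|x_{t+1}^k - x\|^2\Bigr] \leq \tfrac{\rho}{2}\|x - x_0^k\|^2, \]
which is the source of the extra $\tfrac{\rho}{2}$ in the coefficient of $\|\hat x_k(\bar y_k) - x_0^k\|^2$ in the lemma. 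By convexity of $\hat f_k$ in $x$ and concavity in $y$, Jensen's inequality gives $\hatGap_k(\bar x_k, \bar y_k) \leq \frac{1}{T_k}\sum_t [\hat f_k(x_t^k, \hat y_k(\bar x_k)) - \hat f_k(\hat x_k(\bar y_k), y_t^k)]$, so specializing $x = \hat x_k(\bar y_k)$ and $y = \hat y_k(\bar x_k)$ in the summed inequality produces a pathwise bound on the duality gap.

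The main obstacle is controlling the expectation of the martingale-noise terms $\sum_t \delta_{x,t}^{k\top}(x_t^k - \hat x_k(\bar y_k))$ (and its dual counterpart), since $\hat x_k(\bar y_k)$ depends on the entire epoch trajectory — including $\delta_{x,s}^k$ for $s \geq t$ — so a direct tower-property argument fails. I plan to resolve this with the centered decomposition $x_t^k - \hat x_k(\bar y_k) = (x_t^k - x_0^k) - (\hat x_k(\bar y_k) - x_0^k)$: the first piece is $\mathcal F_t$-measurable and has zero conditional expectation against $\delta_{x,t}^k$, while the second factors out of the sum as $(\sum_t \delta_{x,t}^k)^\top(x_0^k - \hat x_k(\bar y_k))$. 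Applying Cauchy--Schwarz, the orthogonality of martingale differences $\E\|\sum_t \delta_{x,t}^k\|^2 = \sum_t \E\|\delta_{x,t}^k\|^2$, the crude variance bound $\E\|\delta_{x,t}^k\|^2 \leq 4 M_1^2$ (obtained from $\|\delta\|^2 \leq 2\|\mathcal G\|^2 + 2\|g\|^2$), and Young's inequality with parameter $1/\eta_x^k$ would contribute $\tfrac{1}{2\eta_x^k T_k}\|\hat x_k(\bar y_k) - x_0^k\|^2$ plus $2\eta_x^k M_1^2$ after averaging. Combined with the $\tfrac{\eta_x^k M_1^2}{2}$ from the direct $\|\mathcal G_{x,t}^k\|^2$ sum and the $\tfrac{1}{2\eta_x^k T_k}$ from the standard telescoping, this accounts for the announced $\tfrac{5}{2}\eta_x^k M_1^2$ and $\tfrac{1}{\eta_x^k T_k} + \tfrac{\rho}{2T_k}$ constants; an identical treatment of the dual side handles $\delta_{y,t}^k$ and yields the matching $\tfrac{5}{2}\eta_y^k M_2^2$ and $\tfrac{1}{\eta_y^k T_k}$ terms, completing the lemma.
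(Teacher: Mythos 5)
Your proposal is correct and gives the stated bound, but it routes around the key difficulty differently from the paper. Both proofs produce the per-step bound
\[
\hat f_k(x_t, y_t) - \hat f_k(x, y_t) \leq \tfrac{1}{2\eta_x^k}\bigl(\|x_t-x\|^2-\|x_{t+1}-x\|^2\bigr) + \tfrac{\gamma}{2}\bigl(\|x_t-x_0\|^2-\|x_{t+1}-x_0\|^2\bigr) - \tfrac{\gamma}{2}\|x_{t+1}-x\|^2 + \tfrac{\rho}{2}\|x-x_t\|^2 + \tfrac{\eta_x^k}{2}\|\Delta_x^t\|^2 + (x-x_t)^\top(\Delta_x^t-\partial_x^t)
\]
and its dual counterpart, sum over $t$, exploit $\gamma = 2\rho$ to discard the negative $\sum_{t=1}^{T-1}\|x-x_t\|^2$ block while retaining $\tfrac{\rho}{2}\|x-x_0\|^2$, and Jensen onto $\hatGap_k(\bar x_k,\bar y_k)$. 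The divergence is in how the cross terms $\sum_t(x-x_t)^\top(\Delta_x^t-\partial_x^t)$ with the trajectory-dependent $x=\hat x_k(\bar y_k)$ are handled. The paper introduces the auxiliary ghost sequences $\tilde x_t, \tilde y_t$ driven by the noise $\partial^t - \Delta^t$ (Nemirovski's virtual-iterate device), so that the summed cross term becomes $\sum_t(\tilde x_t - x_t)^\top(\Delta_x^t-\partial_x^t)$ whose factors are $\mathcal F_t$-measurable and hence kill in expectation, at the cost of an extra $\tfrac{1}{2\eta_x^k}\|x-x_0\|^2$ telescoping term and the extra $\eta_x^k(\|\partial_x^t\|^2+\|\Delta_x^t\|^2)$ per step. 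You instead split $x_t - \hat x_k(\bar y_k) = (x_t - x_0) - (\hat x_k(\bar y_k)-x_0)$, kill the $\mathcal F_t$-measurable piece by the tower property, and on the trajectory-dependent piece factor out $\bigl(\sum_t \delta_{x,t}^k\bigr)^\top(\hat x_k(\bar y_k)-x_0)$, which you then bound by Young's inequality together with martingale orthogonality $\E\|\sum_t\delta_{x,t}^k\|^2 = \sum_t\E\|\delta_{x,t}^k\|^2 \leq 4M_1^2 T_k$. Tracking constants, $\tfrac{\eta_x^k}{2}\cdot 4M_1^2 + \tfrac{\eta_x^k M_1^2}{2} = \tfrac{5}{2}\eta_x^k M_1^2$ and $\tfrac{1}{2\eta_x^k} + \tfrac{1}{2\eta_x^k} + \tfrac{\rho}{2} = \tfrac{1}{\eta_x^k}+\tfrac{\rho}{2}$ (dividing by $T_k$), matching the lemma, and the same bookkeeping on the $y$ side matches $\tfrac{5}{2}\eta_y^k M_2^2$ and $\tfrac{1}{\eta_y^k T_k}$. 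Your decomposition is the more elementary of the two since it introduces no auxiliary iterates; the ghost-sequence technique the paper uses has the separate advantage of pairing the noise with pointwise-bounded martingale differences, which is what enables the high-probability version in Lemma~\ref{lemma:convergence_RSPDsc_per_stage}, but for the in-expectation statement here either route works and gives identical constants.
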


For RHS of (\ref{eq:wcc_inner_loop}), particularly, due to $k \geq 1$, $T_k = \frac{106(k+1)}{3}$, $\eta_x^k = \frac{4}{\rho(k+1)}$ and $\eta_y^k = \frac{2}{\lambda (k+1)}$ in Algorithm \ref{alg:wcc}, 
we have $\frac{1}{T_k} ( \frac{1}{\eta_x^k} + \frac{\rho}{2} ) \leq \frac{3\rho}{212}$ and $\frac{1}{T_k \eta_y^k} = \frac{3\lambda}{212}$.
% The following lemma then transfer $\| \hat x_k(\bar y_k) - y_0^k \|^2 + \| \hat y_k(\bar x_k) - y_0^k \|^2$ into $\widehat \Gap_k(x_0^k) + \widehat \Gap_k(\bar x_k, \bar y_k)$, which prepares telescoping sums over $\widehat \Gap_k(x_0^k, y_0^k) - \widehat \Gap_k(x_0^k, y_0^k)$.
% The following lemma then prepares telescoping sums over $\widehat \Gap_{k+1}(x_0^{k+1}, y_0^{k+1}) - \widehat \Gap_k(x_0^k, y_0^k)$.
% \begin{lem}
% \label{lemma:rhs_hat_gap}
Then for the last two terms in the RHS of (\ref{eq:wcc_inner_loop}), we could have the following upper bound by the key lemma (Lemma \ref{lemma:key_lemma})
\begin{align}
\label{eq:rhs_hat_gap}
\frac{3}{53} \Big( \frac{\rho}{4} \| \hat x_k(\bar y_k) - x_0^k \|^2
+ \frac{\lambda}{4} \| \hat y_k(\bar x_k) - y_0^k \|^2 \Big)
\leq &
\frac{3}{53} \Big( 
\hatGap_k(x_0^k, y_0^k)
+ \hatGap_k(\bar x_k, \bar y_k) 
\Big).
\end{align}
% \end{lem}
On the other hand, the following lemma lower bounds LHS of (\ref{eq:wcc_inner_loop}) to construct telescoping sums.

\begin{lem}
\label{lemma:lhs_hat_gap}
We could derive the following lower bound for $\hatGap_k(\bar x_k, \bar y_k)$
\begin{align}
\label{eq:lhs_hat_gap}
\hatGap_k(\bar x_k, \bar y_k)
\geq
\frac{3}{50} \hatGap_{k+1}( x_0^{k+1}, y_0^{k+1} )
+ \frac{4}{5} ( P(x^{k+1}_0) - P(x_0^k) ) 
+ \frac{\gamma}{80} \| x_0^k - \hat x^*_k \|^2  .
\end{align}
\end{lem}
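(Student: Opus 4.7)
The plan is to lower-bound $\hatGap_k(\bar x_k, \bar y_k)$ by splitting it around the saddle point $(\hat x_k^*, \hat y_k^*)$ of $\hat f_k$, and then to relate the next-epoch gap $\hatGap_{k+1}$ back to $\hatGap_k$ through the prox-center shift $x_0^k \to x_0^{k+1} = \bar x_k$.

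First, writing $\hatGap_k(\bar x_k, \bar y_k) = \tilde P_k(\bar x_k) - \tilde D_k(\bar y_k)$ with $\tilde P_k(x) := \max_y \hat f_k(x, y) = P(x) + \tfrac{\gamma}{2}\|x - x_0^k\|^2$ and $\tilde D_k(y) := \min_x \hat f_k(x, y)$, I would decompose the gap as $[\tilde P_k(\bar x_k) - \tilde P_k(x_0^k)] + [\tilde P_k(x_0^k) - \tilde P_k(\hat x_k^*)] + [\tilde D_k(\hat y_k^*) - \tilde D_k(\bar y_k)]$, using the saddle-point identity $\tilde P_k(\hat x_k^*) = \tilde D_k(\hat y_k^*)$. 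The first bracket equals exactly $(P(\bar x_k) - P(x_0^k)) + \tfrac{\gamma}{2}\|\bar x_k - x_0^k\|^2$; the second is at least $\tfrac{\rho}{2}\|x_0^k - \hat x_k^*\|^2 = \tfrac{\gamma}{4}\|x_0^k - \hat x_k^*\|^2$ by the $\rho$-strong convexity of $\tilde P_k$ (which follows from $\gamma = 2\rho$ and $\rho$-weak convexity of $P$); the third is at least $\tfrac{\lambda}{2}\|\bar y_k - \hat y_k^*\|^2$ by the $\lambda$-strong concavity of $\tilde D_k$. This yields the key sharper lower bound
\begin{align*}
\hatGap_k(\bar x_k, \bar y_k)
\geq (P(\bar x_k) - P(x_0^k)) + \tfrac{\gamma}{2}\|\bar x_k - x_0^k\|^2 + \tfrac{\gamma}{4}\|x_0^k - \hat x_k^*\|^2 + \tfrac{\lambda}{2}\|\bar y_k - \hat y_k^*\|^2.
\end{align*}

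Second, I would relate $\hatGap_{k+1}(x_0^{k+1}, y_0^{k+1}) = P(\bar x_k) - \tilde D_{k+1}(\bar y_k)$ back to $\hatGap_k(\bar x_k, \bar y_k)$. Evaluating the minimizer $w^* := \arg\min_x \hat f_{k+1}(x, \bar y_k)$ inside $\hat f_k(\cdot, \bar y_k)$ and using the 3-point identity $\|w^* - x_0^k\|^2 - \|w^* - \bar x_k\|^2 = \|\bar x_k - x_0^k\|^2 + 2(w^* - \bar x_k)^T(\bar x_k - x_0^k)$ gives the upper bound $\hatGap_{k+1}(\bar x_k, \bar y_k) \leq \hatGap_k(\bar x_k, \bar y_k) + \gamma(w^* - \bar x_k)^T(\bar x_k - x_0^k)$. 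Comparing the optimality conditions for $w^*$ and $z^* := \hat x_k(\bar y_k)$ via monotonicity of the Fr\'echet subgradients of the $\rho$-weakly convex map $f(\cdot, \bar y_k)$ yields $\|w^* - z^*\| \leq (\gamma/\rho)\|\bar x_k - x_0^k\| = 2\|\bar x_k - x_0^k\|$; combining with $\|w^* - \bar x_k\|^2 \leq (2/\rho)\hatGap_{k+1}$ (from strong convexity of $\hat f_{k+1}(\cdot, \bar y_k)$ at its minimizer) and Young's inequality dominates the cross term by a small multiple of $\hatGap_{k+1}$ plus a constant multiple of $\|\bar x_k - x_0^k\|^2$.

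Finally, I would form a convex combination of these two bounds with weights calibrated so that the RHS coefficients land exactly on $4/5$, $\gamma/80$, and $3/50$. The first bound already absorbs $\tfrac{4}{5}(P(\bar x_k) - P(x_0^k))$ with slack $\tfrac{1}{5}$ on the primal-change term and $\tfrac{\gamma}{80}\|x_0^k - \hat x_k^*\|^2$ with the generous slack $\tfrac{19\gamma}{80}$ on the distance-squared term, and leaves the positive reserves $\tfrac{\gamma}{2}\|\bar x_k - x_0^k\|^2$ and $\tfrac{\lambda}{2}\|\bar y_k - \hat y_k^*\|^2$; these slacks, together with the Moreau-envelope-style inequality $P(x_0^k) - P(\hat x_k^*) \geq \tfrac{3\gamma}{4}\|x_0^k - \hat x_k^*\|^2$ (from $\tilde P_k(\hat x_k^*) \leq \tilde P_k(x_0^k)$), must dominate $\tfrac{3}{50}\hatGap_{k+1}$ after the cross term is absorbed. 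The main obstacle is exactly this bookkeeping: the cross term $\gamma(w^* - \bar x_k)^T(\bar x_k - x_0^k)$, once handled via Young's inequality, contributes simultaneously to both $\|\bar x_k - x_0^k\|^2$ and $\hatGap_{k+1}$ itself, so the Young parameter and the convex-combination weights must be chosen very precisely to pin down the three numerical constants $3/50$, $4/5$, $\gamma/80$ exactly, rather than up to universal factors; the fact that $\gamma = 2\rho$ is what makes the distance-contraction constant $\gamma/\rho = 2$ in the $\|w^* - z^*\|$ estimate, and this is decisive for the numerology to close out.
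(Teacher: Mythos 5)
Your proposal is correct in outline and takes a genuinely different route from the paper's. The paper proves a separate three-part helper lemma (Lemma~\ref{lemma:three_cases_lhs}) and combines its three lower bounds with the fixed weights $\tfrac{1}{10},\tfrac{4}{5},\tfrac{1}{10}$; your first bound instead decomposes $\hatGap_k(\bar x_k,\bar y_k)$ \emph{once}, around the saddle point $(\hat x_k^*,\hat y_k^*)$ of $\hat f_k$, to obtain
$\hatGap_k(\bar x_k,\bar y_k)\geq (P(\bar x_k)-P(x_0^k))+\tfrac{\gamma}{2}\|\bar x_k-x_0^k\|^2+\tfrac{\gamma}{4}\|x_0^k-\hat x_k^*\|^2+\tfrac{\lambda}{2}\|\bar y_k-\hat y_k^*\|^2$,
which simultaneously subsumes the paper's parts 2) and 3) with \emph{no} negative compensating term in $\|\bar x_k-x_0^k\|^2$; this is a cleaner and slightly sharper bound. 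Your cross-epoch inequality $\hatGap_{k+1}(\bar x_k,\bar y_k)\leq\hatGap_k(\bar x_k,\bar y_k)+\gamma(w^*-\bar x_k)^\top(\bar x_k-x_0^k)$ with $w^*=\arg\min_x\hat f_{k+1}(x,\bar y_k)$ is a slightly different packaging of the paper's part 1), and both ultimately rely on the same strong-convexity estimate $\|w^*-\bar x_k\|^2\leq(2/\rho)\hatGap_{k+1}$. You stopped short of verifying the numerology; it does close. Taking the convex combination with weight $\theta=\tfrac{4}{5}$ on your first bound and $1-\theta=\tfrac{1}{5}$ on your second, and applying Young's inequality with parameter $\epsilon=\tfrac{7}{20}$ so that $\gamma(w^*-\bar x_k)^\top(\bar x_k-x_0^k)\leq 2\epsilon\,\hatGap_{k+1}+\tfrac{\gamma}{2\epsilon}\|\bar x_k-x_0^k\|^2$, one gets coefficient $(1-\theta)(1-2\epsilon)=\tfrac{3}{50}$ on $\hatGap_{k+1}$, coefficient $\theta\tfrac{\gamma}{2}-(1-\theta)\tfrac{\gamma}{2\epsilon}=\tfrac{4\gamma}{35}\geq 0$ on $\|\bar x_k-x_0^k\|^2$ (dropped), and coefficient $\theta\tfrac{\gamma}{4}=\tfrac{\gamma}{5}\geq\tfrac{\gamma}{80}$ on $\|x_0^k-\hat x_k^*\|^2$, giving the stated inequality with room to spare. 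Two side remarks: the estimate $\|w^*-z^*\|\leq 2\|\bar x_k-x_0^k\|$ and the Moreau-envelope inequality you invoke are not needed for the argument just sketched, and both contain small slips --- the strong-monotonicity constant should be $\gamma/(\gamma-\rho)$ (not $\gamma/\rho$; the numerical value $2$ coincides only because $\gamma=2\rho$), and the Moreau-envelope constant should be $\gamma/2$ rather than $3\gamma/4$. Deleting these two extraneous pieces makes your proof tight.
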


% {\bf Remark:} 
Lemma \ref{lemma:lhs_hat_gap} lower bounds $\widehat \Gap_k(\bar x_k, \bar y_k)$ in LHS of (\ref{eq:wcc_inner_loop}) by three parts.
The first part constructs telescoping sum of $\widehat \Gap_{k+1}(x_0^{k+1}, y_0^{k+1}) - \widehat \Gap_k(x_0^k, y_0^k)$ together with (\ref{eq:rhs_hat_gap}).
The second part itself is an element of telescoping sums over the primal gap.
The third part $\| x_0^k - \hat x_k^* \|^2$ can be used as the measure of nearly $\epsilon$-stationary point, which is further explored in Theorem \ref{thm:wcc_convergence}.
% In the following is the proof for Theorem \ref{thm:wcc_convergence}.

\begin{proof} % proof of Theorem for WCSC problem (final convergence)
(of Theorem \ref{thm:wcc_convergence})
Consider the $k$-th stage.
Let us start from (\ref{eq:wcc_inner_loop}) in Lemma \ref{lemma:inner_loop_wcc} as follows
\begin{align*}
&
\E [ \hatGap_k(\bar x_k, \bar y_k) ]
\nonumber\\
\leq &
\frac{5 \eta_x^k M_1^2}{2} + \frac{5 \eta_y^k M_2^2}{2}
+ \frac{1}{T_k} \Big\{ ( \frac{1}{\eta_x} + \frac{\rho}{2} ) \E [ \| \hat x_k(\bar y_k) - x_0^k \|^2 ]
+ \frac{1}{\eta_y} \E [ \| \hat y_k(\bar x_k) - y_0^k \|^2 ] \Big\}  
\nonumber\\
\stackrel{(a)}{\leq} &
\frac{5 \eta_x^k M_1^2}{2} + \frac{5 \eta_y^k M_2^2}{2}
+ \frac{3}{53} \big( \frac{\rho}{4} \E [ \| \hat x_k(\bar y_k) - x_0^k \|^2 ]
+ \frac{\lambda}{4} \E [ \| \hat y_k(\bar x_k) - y_0^k \|^2 ]  \Big) 
\nonumber\\
\stackrel{(\ref{eq:rhs_hat_gap})}{\leq} &
\frac{5 \eta_x^k M_1^2}{2} + \frac{5 \eta_y^k M_2^2}{2}
+ \frac{3}{53} \E [ \hatGap_k(x_0^k, y_0^k) ]
+ \frac{3}{53} \E [ \hatGap_k(\bar x_k, \bar y_k) ]  ,
\end{align*}
where
$(a)$ is due to settings 
$T_k = \frac{106(k+1)}{3}$,
$\eta_x^k = \frac{4 }{\rho (k+1)}$, and
$\eta_y^k = \frac{2 }{\lambda (k+1)}$.
Re-organizing the above inequality, we have
\begin{align}
\label{eq:wcc_inner_loop_after_upper_bound2}
\frac{50}{53} \E [ \hatGap_k(\bar x_k, \bar y_k) ]
\leq
\frac{5 \eta_x^k M_1^2}{2} + \frac{5 \eta_y^k M_2^2}{2}
+ \frac{3}{53} \E [ \hatGap_k(x_0^k, y_0^k) ]  .
\end{align}

Then for the LHS of (\ref{eq:wcc_inner_loop_after_upper_bound2}), we apply (\ref{eq:lhs_hat_gap}) of Lemma \ref{lemma:lhs_hat_gap} as follows
\begin{align}
\label{eq:wcc_plugin_lhs1}
&
\frac{50}{53} \Big( \frac{3}{50} \hatGap_{k+1}( x_0^{k+1}, y_0^{k+1} )
+ \frac{4}{5} ( P(x^{k+1}_0) - P(x_0^k) ) 
+ \frac{\gamma}{80} \| x_0^k - \hat x^*_k \|^2 \Big)
\nonumber\\
\leq &
\frac{5 \eta_x^k M_1^2}{2} + \frac{5 \eta_y^k M_2^2}{2}
+ \frac{3}{53} \E [ \hatGap_k(x_0^k, y_0^k) ]   .
\end{align}
Next we have
\begin{align}
\label{eq:wcc_plugin_lhs2}
\frac{5 \gamma}{424} \E [ \| x_0^k - \hat x_k^* \|^2 ]
\leq &
\frac{5 \eta_x^k M_1^2}{2} + \frac{5 \eta_y^k M_2^2}{2}
+ \frac{40}{53} \E [ P(x_0^k) - P(x_0^{k+1}) ]
\nonumber\\
&
+ \frac{3}{53} \Big( \E [ \hatGap_k(x_0^k, y_0^k) ] - \E [ \hatGap_{k+1}(x_0^{k+1}, y_0^{k+1}) ] \Big)
\end{align}

Summing from $k=1$ to $k=K$, we have
\begin{align}
\frac{5 \gamma}{424} \sum_{k=1}^K \E [ \| x_0^k - \hat x_k^* \|^2 ]
\leq &
\underbrace{ \sum_{k=1}^K \frac{5 \eta_x^k M_1^2}{2} + \sum_{k=1}^K \frac{5 \eta_y^k M_2^2}{2} }_{ := A }
+ \frac{40}{53} \underbrace{ \sum_{k=1}^K \E [ P(x_0^k) - P(x_0^{k+1}) ] }_{ := B }
\nonumber\\
\label{eq:wcc_final_telescope0}
&
+ \frac{3}{53} \underbrace{ \sum_{k=1}^K \Big( \E [ \hatGap_k(x_0^k, y_0^k) ] - \E [ \hatGap_{k+1}(x_0^{k+1}, y_0^{k+1}) ] \Big) }_{ :=C }
\\
% \stackrel{(a)}{=} &
% \frac{ 5 M_1^2 }{\rho} \sum_{k=1}^{K} \frac{1}{k}
% + \frac{5 M_2^2}{\lambda} \sum_{k=1}^{K} \frac{1}{k}
% + \frac{40}{53} \E [ P(x_0^1)
% - P(x^{K+1}_0) ]
% \nonumber\\
% &
% + \frac{3}{53} \E [ \hatGap_1(x_0^1, y_0^1) ]
% - \frac{3}{53} \E [ \hatGap_{K+1} (x_0^{K+1}, y_0^{K+1}) ]
% \nonumber\\
% \stackrel{(b)}{\leq} &
% \frac{ 5 M_1^2 }{\rho} ( \ln(K) + 1 )
% + \frac{5 M_2^2}{\lambda} ( \ln(K) + 1 )
% + \frac{3}{53} \E [ \hatGap_1(x_0^1, y_0^1) ]
% \nonumber\\
% &
% + \frac{40}{53} \E [ f(x_0^1, \hat y_1(x_0^1))
% - f(\hat x(y_0^1), y_0^1 ) ]
% \nonumber\\
\label{eq:wcc_final_telescope}
\leq &
5 \Big( \frac{ 2 M_1^2 }{\rho} + \frac{M_2^2}{\lambda} \Big) \ln(K + 1)
+ \frac{43}{53} \E [ \Gap(x_0^1, y_0^1) ]  ,
\end{align}
where the last inequality is due to the upper bounds the three terms $A$, $B$ and $C$ as follows.

\noindent
For the term $A$, we have
\begin{align*}
A 
= &
\sum_{k=1}^K \frac{5 \eta_x^k M_1^2}{2} + \sum_{k=1}^K \frac{5 \eta_y^k M_2^2}{2}
=
\frac{ 10 M_1^2 }{\rho} \sum_{k=1}^{K} \frac{1}{k + 1}
+ \frac{5 M_2^2}{\lambda} \sum_{k=1}^{K} \frac{1}{k + 1}
\\
\leq &
5 \Big( \frac{ 2 M_1^2 }{\rho} + \frac{M_2^2}{\lambda} \Big) \ln(K + 1)  ,
\end{align*}
where the second equality is due to the setting of $\eta_x^k = \frac{4}{\rho ( k + 1 ) }$ and $\eta_y^k = \frac{2}{\lambda ( k + 1 ) }$.
The last inequality is due to $\sum_{k=1}^{K+1} \frac{1}{k} \leq \ln(K + 1) + 1$.

For the term $B$, we have
\begin{align*}
B 
= &
\sum_{k=1}^K \E [ P(x_0^k) - P(x_0^{k+1}) ]
= 
\E [ P(x_0^1) - P(x_0^{K+1}) ]
=
\E [ f( x_0^1, \hat y(x_0^1)) - f(x_0^{K+1}, \hat y(x_0^{K+1}) ) ]
\\
\leq &
\E [ f( x_0^1, \hat y(x_0^1)) - f(x_0^{K+1}, y_0^1 ) ]
\leq
\E [ f( x_0^1, \hat y(x_0^1)) - f( \hat x(y_0^1), y_0^1 ) ]  
=
\E [ \Gap(x_0^1, y_0^1) ] ,
\end{align*}
where the two inequalities are due to $f(x_0^{K+1}, \hat y(x_0^{K+1}) ) \geq f(x_0^{K+1}, y_0^1 ) \geq f( \hat x(y_0^1), y_0^1 )$.

For the term $C$, we have
\begin{align*}
C 
= &
\sum_{k=1}^K \Big( \E [ \hatGap_k(x_0^k, y_0^k) - \hatGap_{k+1}(x_0^{k+1}, y_0^{k+1}) ] \Big)
\\
= &
\E [ \hatGap_k(x_0^1, y_0^1) - \hatGap_{K+1}(x_0^{K+1}, y_0^{K+1}) ]
\leq 
\E [ \hatGap_k(x_0^1, y_0^1) ]
\\
% = &
% \E [ \hat f_1(x_0^1, \hat y_1(x_0^1)) - \hat f_1(\hat x_1(y_0^1), y_0^1) ]
% \\
=
&
\E [ f(x_0^1, \hat y(x_0^1)) + \frac{\gamma}{2} \| x_0^1 - x_0^1 \|^2
- f(\hat x_1(y_0^1), y_0^1) - \frac{\gamma}{2} \| \hat x_1(y_0^1) - x_0^1 \|^2 ]
\\
\leq &
\E [ f(x_0^1, \hat y(x_0^1)) - f(\hat x(y_0^1), y_0^1) ]
=
\E [ \Gap(x_0^1, y_0^1) ] ,
\end{align*}
where the first inequality is due to $\hatGap_{K+1}(x_0^{K+1}, y_0^{K+1}) \geq 0$.
% and the last inequality is due to $\frac{\gamma}{2} \| \hat x_1(y_0^1) - x_0^1 \|^2 \geq 0$.
By plugging the above upper bounds of the three terms $A$, $B$ and $C$ into (\ref{eq:wcc_final_telescope0}), we have (\ref{eq:wcc_final_telescope}).

Then by randomly sampling $\tau$ from $\{ 1, ..., K \}$, we have
\begin{align*}
\E [ \| x_0^\tau - \hat x^*_\tau \|^2 ]
\leq &
\frac{424}{\gamma K} \Big( \frac{ 2 M_1^2 }{\rho} + \frac{M_2^2}{\lambda} \Big) \ln(K + 1)
+ \frac{344}{5 \gamma K} \E [ \Gap(x_0^1, y_0^1) ]  .
% \\
% \leq &
% \frac{80}{(1-\frac{3}{53})\mu K} \{ 5 ( \frac{ M_1^2 }{\rho} + \frac{ M_2^2}{\lambda} ) \ln(K)
% + \frac{43}{53} \Gap(x_0^1, y_0^1) \}
% \\
% = &
% \frac{424}{\mu K} ( \frac{ M_1^2 }{\rho} + \frac{ M_2^2}{\lambda} ) \ln(K)
% + \frac{344}{5 \mu K} \Gap(x_0^1, y_0^1) \}
\end{align*}

\noindent
Since $\E [\text{Dist}(0, \partial P(\hat x^*_\tau))^2 ] \leq \gamma^2 \E [ \| x^*_\tau - x_0^\tau \|^2 ]$ and $\gamma = 2 \rho$, we could set
$$
K = \max \left\{ \frac{1696 \rho (\frac{2 M_1^2}{\rho} + \frac{M_2^2}{\lambda} ) }{ \epsilon^2 } \ln( \frac{1696 \rho (\frac{2 M_1^2}{\rho} + \frac{M_2^2}{\lambda} ) }{ \epsilon^2 }), 
\frac{ 1376 \rho \Gap(x_0^1, y_0^1) }{5 \epsilon^2} \right\}   ,
$$
which leads to $\gamma^2 \E[ \| x_\tau^* - x_0^\tau \|^2 ] \leq \epsilon^2$.
\noindent
Recall $T_k = \frac{106 (k+1)}{3}$.
To compute the total number of iterations, we have
\begin{align*}
T_{tot} 
= \sum_{k=1}^{K} T_k
= \frac{106}{3} \sum_{k=1}^{K} (k+1)
= O(K^2)
= \tilde O\left(\frac{1}{\epsilon^4}\right). 
\end{align*}
\end{proof}

\section{Proof of Lemma \ref{lemma:key_lemma}}

\begin{proof}

Let us first consider the first term in LHS of (\ref{eq:key_lemma}) as follows,
\begin{align}\label{eq:upper_bound1_key_lemma}
&
\frac{\mu}{4} \| \hat x(y_1) - x_0 \|^2
\nonumber\\
\leq & 
\frac{\mu}{2} \| \hat x(y_1) - x^* \|^2
+ \frac{\mu}{2} \| x^* - x_0^k \|^2
\nonumber\\
\stackrel{(a)}{\leq} &
f (x^*, y_1) - f (\hat x(y_1), y_1)
+ f (x_0, y^*) - f (x^*, y^*)
\nonumber\\
\stackrel{(b)}{\leq} &
f (x^*, y^*) - f (\hat x(y_1), y_1)
+ f (x_0, y^*) - f (x^*, y^*)
\nonumber\\
\stackrel{(c)}{\leq} &
f (x_0, \hat y(x_0)) - f (\hat x(y_1), y_1)  ,
\end{align}
where inequality $(a)$ is due to $\mu$-strong convexity of $f(x, y_1)$ in $x$ with fixed $y_1$ (with optimality at $\hat x(y_1)$) 
and 
$\mu$-strong convexity of $f(x, y^*)$ in $x$ with fixed $y^*$ (with optimality at $x^*$).
Inequality $(b)$ is due to $f (x^*, y_1) \leq f (x^*, y^*)$.
Inequality $(c)$ is due to $f (x_0, y^*) \leq f (x_0, \hat y(x_0))$.

In a similar way, for the second term, we have
\begin{align}\label{eq:upper_bound2_key_lemma}
&
\frac{\lambda}{4} \| \hat y(x_1) - y_0 \|^2
\nonumber\\
\leq &
\frac{\lambda}{2} \| \hat y(x_1) - y^* \|^2
+ \frac{\lambda}{2} \| y^* - y_0 \|^2
\nonumber\\
\stackrel{(a)}{\leq} &
f(x_1, \hat y(x_1)) - f(x_1, y^*)
+ f(x^*, y^*) - f(x^*, y_0)
\nonumber\\
\stackrel{(b)}{\leq} &
f(x_1, \hat y(x_1)) - f(x^*, y^*)
+ f(x^*, y^*) - f(x^*, y_0)
\nonumber\\
\stackrel{(c)}{\leq} &
f(x_1, \hat y(x_1)) - f(\hat x(y_0), y_0)   ,
\end{align}
where inequality $(a)$ is due to $\lambda$-strong concavity of $f(x_1, y)$ in $y$ with fixed $x_1$ (optimality at $\hat y(x_1)$) and 
$f(x^*, y)$ in $y$ with fixed $x^*$ (optimality at $\hat y^*$).
Inequality $(b)$ is due to $f(x_1, y^*) \geq f(x^*, y^*)$.
Inequality $(c)$ is due to $f(x^*, y_0) \geq f(\hat x(y_0), y_0)$.

Then, combining inequalities (\ref{eq:upper_bound1_key_lemma}) and (\ref{eq:upper_bound2_key_lemma}), we have
\begin{align*}
&
\frac{\mu}{4} \| \hat x(y_1) - x_0 \|^2
+ \frac{\lambda}{4} \| \hat y(x_1) - y_0 \|^2
\\
\leq &
f(x_0, \hat y(x_0)) - f(\hat x(y_1), y_1)
+ f(x_1, \hat y(x_1)) - f(\hat x(y_0), y_0)
\\
= &
\Big( \max_{y' \in \Omega_2} f(x_0, y') - \min_{x' \in \Omega_1} f(x', y_0) \Big)
+ \Big( \max_{y' \in \Omega_2} f(x_1, y') - \min_{x' \in \Omega_1} f(x', y_1) \Big)  .
\end{align*}
\end{proof}

\section{Proof of Lemma~\ref{lemma:convergence_RSPDsc_per_stage} }
\label{section_proof:lemma:convergence_RSPDsc_per_stage}

\begin{proof}
% Before the proof, we first present the Azuma's inequality for martingale difference sequence as follows
Before the proof, we first present the following two lemmas as follows.
\begin{lem}
\label{lemma:hp_bound_sum_grad}
Let $X_1, X_2, ..., X_T$ be independent random variables and $\E_t[ \exp( \frac{ X_t^2 }{ B^2 } ) ] \leq \exp(1)$ for any $t \in \{ 1, ..., T \}$.
Then we have with probability at least $1 - \tilde \delta$
\begin{align*}
\sum_{t=1}^T X_t \leq B^2 ( T + \log(1/\tilde \delta) )  .
\end{align*}
\end{lem}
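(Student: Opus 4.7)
The idea is to convert the Orlicz-type (sub-Gaussian) hypothesis on $X_t^2$ into a linear-MGF bound on $X_t$ itself, then apply the standard Chernoff/Markov argument tensorized across the independent $X_t$'s. The target is a sub-exponential tail of the form $\Pr\bigl(\sum_t X_t \ge s\bigr) \le e^{T - s/B^2}$; with the choice $s = B^2(T+\log(1/\tilde\delta))$ the right-hand side collapses to $\tilde\delta$, delivering the stated bound.

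\emph{Step 1 (single-variable MGF).} I would begin from the elementary inequality $\alpha X^2 - X + \beta \ge 0$, valid for every real $X$ whenever $\alpha>0$ and $\alpha\beta \ge 1/4$ (complete the square). Choosing $\alpha = \beta = 1/2$ and applying this to $X_t$ gives $X_t \le X_t^2/2 + B^2/2$ for every $X_t$ as long as $B \ge 1$; dividing by $B^2$ yields $X_t/B^2 \le X_t^2/(2B^2) + 1/2$. Exponentiating, taking expectations, and using Jensen's inequality on the concave map $u\mapsto u^{1/2}$ together with the hypothesis gives
\[
\E\bigl[\exp(X_t/B^2)\bigr] \le e^{1/2}\,\E\bigl[\exp(X_t^2/(2B^2))\bigr] \le e^{1/2}\bigl(\E[\exp(X_t^2/B^2)]\bigr)^{1/2} \le e^{1/2}\cdot e^{1/2}=e.
\]

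\emph{Step 2 (tensorize and apply Markov).} By the mutual independence of $X_1,\dots,X_T$, $\E\bigl[\exp\bigl(\tfrac{1}{B^2}\sum_{t=1}^T X_t\bigr)\bigr] = \prod_{t=1}^T \E[\exp(X_t/B^2)] \le e^T$. Markov's inequality then gives, for any $s>0$,
\[
\Pr\bigl(\sum\nolimits_t X_t \ge s\bigr) = \Pr\bigl(\exp(\tfrac{1}{B^2}\sum\nolimits_t X_t)\ge e^{s/B^2}\bigr) \le e^{T}\,e^{-s/B^2},
\]
and setting $s = B^2(T+\log(1/\tilde\delta))$ makes the right-hand side equal to $\tilde\delta$. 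Taking complements gives the claimed high-probability bound on $\sum_t X_t$.

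\emph{Main obstacle.} The delicate point is Step 1: one needs a linear-MGF bound with constant \emph{exactly} $e$ so that tensorization yields the leading factor $e^T$ (not some larger $e^{cT}$ that would pollute the $T$-term on the right). The clean choice $\alpha=\beta=1/2$ works only when $B\ge 1$, which is the regime in which the lemma is actually invoked in the paper (there $B$ upper-bounds the sub-Gaussian norm of a stochastic subgradient, which is at least a constant after standard normalization). For $B<1$ one can recover the same structure with a different pairing $\alpha\in(0,1)$, $\beta=1-\alpha$ subject to $\alpha\beta\ge 1/(4B^2)$, at the cost of an additive $O(T)$ correction in $s$; this case is not needed in the sequel. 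Nothing else in the argument uses boundedness, centeredness, or identical distribution---only mutual independence of the $X_t$'s.
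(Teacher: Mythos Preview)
Your Step~2 is exactly the paper's proof: tensorize the single-variable MGF bound across the independent coordinates to get $\E\bigl[\exp(\tfrac{1}{B^2}\sum_t X_t)\bigr]\le e^T$, then apply Markov with threshold $s=B^2(T+\log(1/\tilde\delta))$. The paper writes this out via an iterated-conditioning chain rather than a product, but the computation is identical.

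Your Step~1, however, does not appear in the paper at all. The paper's proof invokes ``the assumption'' to bound $\E_T[\exp(X_T/B^2)]\le e$ \emph{directly}, i.e.\ it treats the hypothesis as a linear-MGF bound on $X_t$, not on $X_t^2$. If you look at how the lemma is actually used (to bound $\sum_t \|\Delta_x^t\|^2$ with $X_t:=\|\Delta_x^t\|^2$ under Assumption~2, which says $\E[\exp(\|\Delta_x^t\|^2/B_1^2)]\le e$), the intended hypothesis is clearly $\E[\exp(X_t/B^2)]\le e$; the square in the lemma statement is almost certainly a typo. So your Step~1 is extra work bridging the literal statement to the intended one. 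It is correct under the side condition $B\ge 1$ you identify (the inequality $X_t\le X_t^2/2+1/2\le X_t^2/2+B^2/2$ and the Jensen step $\E[e^{Y/2}]\le(\E[e^Y])^{1/2}$ are both fine), and the restriction is harmless in context; but it is not part of the paper's own argument.
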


\begin{lem}\label{lemma:inner_product_term_hp_bound} (Lemma 2 of \cite{lan2012validation})
Let $X_1, ..., X_T$ be a martingale difference sequence, i.e., $E_t [X_t] = 0$ for all $t$.
Suppose that for some values $\sigma_t$, for $t = 1, 2, ..., T$, we have $E_t [ \exp( \frac{  X^2_t }{ \sigma_t^2 } ) ] \leq \exp(1)$. 
Then with probability at least $1 - \delta$, we have
\begin{align*}
\sum_{t=1}^T X_t \leq \sqrt{ 3 \log(1/\delta) \sum_{t=1}^T \sigma_t^2 }  .
\end{align*}
\end{lem}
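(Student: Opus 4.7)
The plan is to establish this tail bound via the classical Chernoff / exponential-supermartingale technique. I will proceed in four steps: (i) deduce a conditional sub-Gaussian moment generating function (MGF) bound for each $X_t$ from the exponential-squared-moment hypothesis; (ii) build an exponential supermartingale from the partial sums; (iii) apply Markov's inequality; and (iv) optimize the free parameter to obtain the claimed form.

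First I would establish the single-step conditional MGF bound: for each $t$ and every real $\lambda$,
\begin{align*}
\E_{t-1}[\exp(\lambda X_t)] \leq \exp\!\left(c\,\lambda^2 \sigma_t^2\right),
\end{align*}
with a constant $c \leq 3/4$, which is precisely what is needed to produce the factor $3$ in the final bound. The starting point is Young's inequality $\lambda x \leq \frac{\lambda^2 \sigma^2}{4} + \frac{x^2}{\sigma^2}$, which combined with the hypothesis only gives $\E_{t-1}[\exp(\lambda X_t)] \leq e \cdot \exp(\lambda^2 \sigma_t^2/4)$. The spurious additive $1$ in the exponent must be eliminated by using the zero-mean property $\E_{t-1}[X_t]=0$. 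The cleanest route is a term-by-term Taylor-series argument: expanding $\exp(X_t^2/\sigma_t^2)$ shows $\E_{t-1}[X_t^{2k}] \leq e\, k!\, \sigma_t^{2k}$ for every $k \geq 0$, the odd moments are controlled via Cauchy--Schwarz, the linear term in $\lambda$ vanishes by the martingale-difference property, and the remaining series can be resummed to the desired form $\exp(c\lambda^2\sigma_t^2)$.

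Next I would introduce $S_t := \sum_{s=1}^t X_s$ and $V_t := \sum_{s=1}^t \sigma_s^2$ and define the exponential process
\begin{align*}
Z_t := \exp\!\left(\lambda S_t - c\,\lambda^2 V_t\right).
\end{align*}
The single-step MGF bound immediately yields $\E_{t-1}[Z_t] \leq Z_{t-1}$, so $\{Z_t\}$ is a nonnegative supermartingale with $\E[Z_0]=1$. Markov's inequality then gives $\Pr(S_T \geq a) \leq \exp(-\lambda a + c\lambda^2 V_T)$ for every $\lambda > 0$, and optimizing in $\lambda$ (with optimizer $\lambda^{\ast}= a/(2cV_T)$) produces $\Pr(S_T \geq a) \leq \exp(-a^2/(4cV_T))$. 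Choosing $a := \sqrt{3\log(1/\delta)\,V_T}$ together with $c = 3/4$ makes the right-hand side equal to $\delta$, which is exactly the advertised bound.

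The main obstacle is step (i): extracting a clean sub-Gaussian MGF bound, with a constant small enough to produce the factor $3$, from the exponential-squared-moment hypothesis. A naive Young's inequality is too lossy because it leaves an additive constant in the exponent that would accumulate multiplicatively across time; removing it forces a non-trivial use of the zero-mean property, either via the moment-expansion bookkeeping described above or via a symmetrization argument with an independent copy of $X_t$. Once the correct MGF bound is secured, the supermartingale construction, Markov step, and $\lambda$-optimization are routine.
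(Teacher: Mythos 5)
The paper does not prove this lemma; it is cited verbatim as Lemma~2 of Lan, Nemirovski, and Shapiro (2012), so there is no in-paper argument to compare against. Your high-level plan --- a conditional sub-Gaussian MGF bound with constant $c=3/4$, then an exponential supermartingale, Markov, and $\lambda$-optimization --- is the right skeleton, and the arithmetic checks out: the single-step bound $\E_{t-1}[\exp(\lambda X_t)]\le\exp(\tfrac34\lambda^2\sigma_t^2)$ does yield, after peeling, Markov, and optimizing $\lambda$, the tail threshold $\sqrt{3\log(1/\delta)\sum_t\sigma_t^2}$.

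The genuine gap is in the step you yourself flag as the crux, and your sketch does not close it. The clean route to $c=3/4$ is the elementary pointwise inequality $e^u\le u+e^{3u^2/4}$ (valid for all real $u$), which together with $\E_{t-1}[X_t]=0$ gives $\E_{t-1}[e^{\lambda X_t}]\le\E_{t-1}[e^{3\lambda^2X_t^2/4}]$. But the next step --- controlling $\E_{t-1}[e^{3\lambda^2X_t^2/4}]$ by $e^{3\lambda^2\sigma_t^2/4}$ --- invokes Jensen applied to $u\mapsto u^{3\lambda^2\sigma_t^2/4}$, which is concave (so Jensen runs the right way) only when the exponent is at most one, i.e., only for $|\lambda|\sigma_t\le 2/\sqrt3$. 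For larger $|\lambda|$ Jensen reverses. Your assertion that the MGF bound holds ``for every real $\lambda$'' is therefore not supported by the tools you invoke: a Young-plus-Jensen argument covers a disjoint large-$\lambda$ regime but leaves an uncovered middle band, and the Taylor-series resummation you describe does not cleanly produce the uniform constant $3/4$ either, since the hypothesis only bounds each even moment $\E_{t-1}[X_t^{2k}]$ up to a multiplicative factor of $e$ that does not cancel term by term. A correct proof must either establish the uniform MGF bound by a genuinely finer argument, or carry the \emph{restricted} MGF bound (valid for $|\lambda|\sigma_t\le 2/\sqrt3$) through the supermartingale and Markov steps and then verify that the optimizing $\lambda^\ast=2a/(3V_T)$ lies in the admissible range --- equivalently, that $\log(1/\delta)\max_t\sigma_t^2\le V_T$ --- handling the complementary regime by a separate argument. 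As written, your proposal omits this range check and the accompanying case split, so it is not yet a complete proof.
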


For simplicity of presentation, we use the notations $\Delta_{x}^{t} = \partial_{x} f(x_{t}, y_{t}; \xi_{t})$, $\Delta_{y}^{t} = \partial{y} f(x_{t}, y_{t}, ; \xi_{t})$, $\partial_{x}^{t} = \partial_{x} f(x_{t}, y_{t})$ and $\partial_{y}^{t} = \partial_{y} f(x_{t}, y_{t})$.
To prove Lemma~\ref{lemma:convergence_RSPDsc_per_stage}, we would leverage the following two update approaches:
\begin{align}
\label{eq1:two_update_sequences}
  & 
  \left \{ 
    \begin{array}{cc}
    x_{t+1} = \arg \min_{x \in X \cap \calB(x_0, R)} &  x^{\top} \Delta_{x}^{t} + \frac{1}{2 \eta_x } || x - x_{t} ||^{2}   \\
    y_{t+1} = \arg \min_{y \in Y \cap \calB(y_0, R)} & -y^{\top} \Delta_{y}^{t} + \frac{1}{2 \eta_y } || y - y_{t} ||^{2}   
    \end{array}
  \right.     \nonumber   \\
  & 
  \left \{ 
    \begin{array}{cc}
    \xtilde_{t+1} = \arg \min_{x \in X \cap \calB(x_0, R)} &  x^{\top} ( \partial_{x}^{t} - \Delta_{x}^{t} ) + \frac{1}{2 \eta_x } || x - \xtilde_{t} ||^{2}   \\
    \ytilde_{t+1} = \arg \min_{y \in Y \cap \calB(y_0, R)} & -y^{\top} ( \partial_{y}^{t} - \Delta_{y}^{t} ) + \frac{1}{2 \eta_y } || y - \ytilde_{t} ||^{2}    ,
    \end{array}
  \right. 
\end{align}
where $x_{0} = \xtilde_{0}$ and $y_{0} = \ytilde_{0}$.
%The first two updates are identical to Line~\ref{alg1:line:pd_sa_update_x} and Line~\ref{alg1:line:pd_sa_update_y} in Algorithm~\ref{alg:RSPDsc}.
The first two updates are identical to Line 4 and Line 5 in Algorithm~\ref{alg:RSPD_scsc}.
This can be verified easily.
Take the first one as example:
\begin{align*}
      x_{t+1} 
=  & 
      \Pi_{X } ( x_{t} - \eta_x \Delta_{x}^{t} )  
=     \arg\min_{x \in X \cap \calB(x_0, R)} || x - (x_{t} - \eta_x \Delta_{x}^{t}) ||^{2} 
\\
=  & \arg\min_{x \in X \cap \calB(x_0, R)} \frac{1}{2 \eta_x} || x - x_{t} ||^{2} + x^{\top} \Delta_{x}^{t}  .
\end{align*}
Let $\psi(x) = x^{\top} u + \frac{1}{2\gamma} || x - v ||^{2}$ with $x' = \arg \min_{x \in X'} \psi(x)$, which includes the four update approaches in~(\ref{eq1:two_update_sequences}) as special cases.
By using the strong convexity of $\psi(x)$ and the first order optimality condition ($\partial \psi(x')^{\top} ( x - x' ) \geq 0$), for any $x \in X'$, we have
\begin{align*}
       \psi(x) - \psi(x') \geq & \partial \psi(x')^T ( x - x') + \frac{1}{2\gamma} || x - x' ||^{2} \geq \frac{1}{2\gamma} || x - x' ||^{2},
\end{align*}
which implies
\begin{align*}
       0 \leq & ( x - x' )^{\top} u + \frac{1}{2\gamma}  || x - v ||^{2} - \frac{1}{2\gamma}  || x' - v ||^{2} - \frac{1}{2\gamma} || x - x' ||^{2}  \\
=    & ( v- x' )^{\top}  u- (v - x )^{\top} u + \frac{1}{2\gamma}  || x - v ||^{2} - \frac{1}{2\gamma} || x' - v ||^{2} - \frac{1}{2\gamma}  || x - x' ||^{2} \\
=    & - \frac{1}{2\gamma}  || x' - v ||^{2} + ( v - x' )^{\top} u  + \frac{1}{2\gamma} || x - v ||^{2}  - \frac{1}{2\gamma}  || x - x' ||^{2} - (v - x )^{\top} u  \\
\leq &  \frac{\gamma}{2} || u ||^{2} + \frac{1}{2\gamma}  || x - v ||^{2}  - \frac{1}{2\gamma} || x - x' ||^{2} - (v - x )^{\top} u. 
\end{align*}
Then
\begin{align}
\label{eq1:proof_expectation1}
(v - x )^{\top} u  \leq \frac{\gamma}{2} || u ||^{2} + \frac{1}{2\gamma}  || x - v ||^{2}  - \frac{1}{2\gamma}  || x - x' ||^{2} .
\end{align}
Applying the above result to the updates in~(\ref{eq1:two_update_sequences}), we have for any $x \in X \cap \calB(x_0, R)$ and $y \in Y \cap \calB(y_0, R)$,
\begin{align}
\label{eq1:seperate_all}
& ( x_{t} - x )^{\top} \Delta_{x}^{t} \leq \frac{1}{2 \eta_x} || x - x_{t} ||^{2} - \frac{1}{2 \eta_x} || x - x_{t+1} ||^{2} + \frac{\eta_x}{2} || \Delta_{x}^{t} ||^{2}   \nonumber\\
& ( y - y_{t} )^{\top} \Delta_{y}^{t} \leq \frac{1}{2 \eta_y} || y - y_{t} ||^{2} - \frac{1}{2 \eta_y} || y - y_{t+1} ||^{2} + \frac{\eta_y}{2} || \Delta_{y}^{t} ||^{2}   \nonumber\\
& ( \xtilde_{t} - x )^{\top} ( \partial_{x}^{t} - \Delta_{x}^{t} ) \leq \frac{1}{2 \eta_x} || x - \xtilde_{t} ||^{2} - \frac{1}{2 \eta_x} || x - \xtilde_{t+1} ||^{2} + \frac{\eta_x}{2} || \partial_{x}^{t} - \Delta_{x}^{t} ||^{2}   \nonumber\\
& ( y - \ytilde_{t} )^{\top} ( \partial_{y}^{t} - \Delta_{y}^{t} ) \leq \frac{1}{2 \eta_y} || y - \ytilde_{t} ||^{2} - \frac{1}{2 \eta_y} || y - \ytilde_{t+1} ||^{2} + \frac{\eta_y}{2} || \partial_{y}^{t} - \Delta_{y}^{t} ||^{2} .
\end{align}
Adding the above four inequalities together, we have
\begin{align}
\label{eq1:combine_all}
\LHS 
=    & 
       ( x_{t} - x )^{\top} \Delta_{x}^{t} + ( y - y_{t} )^{\top} \Delta_{y}^{t} + ( \xtilde_{t} - x )^{\top} ( \partial_{x}^{t} - \Delta_{x}^{t} ) + ( y - \ytilde_{t} )^{\top} ( \partial_{y}^{t} - \Delta_{y}^{t} )      
       \nonumber\\
=    & 
       ( x_{t} - x )^{\top} \partial_{x}^{t} + ( x_{t} - x )^{\top} ( \Delta_{x}^{t} - \partial_{x}^{t} )
       + ( y - y_{t} )^{\top} \partial_{y}^{t} + ( y - y_{t} )^{\top} ( \Delta_{y}^{t} - \partial_{y}^{t} )    
       \nonumber\\
     & + ( \xtilde_{t} - x )^{\top} ( \partial_{x}^{t} - \Delta_{x}^{t} ) + ( y - \ytilde_{t} )^{\top} ( \partial_{y}^{t} - \Delta_{y}^{t} )   
     \nonumber\\
=    & 
       - ( x - x_{t} )^{\top} \partial_{x}^{t} + (y - y_{t})^{\top} \partial_{y}^{t}  - ( x_{t} - \xtilde_{t} )^{\top} ( \partial_{x}^{t} - \Delta_{x}^{t} ) - ( \ytilde_{t} - y_{t} )^{\top} ( \partial_{y}^{t} - \Delta_{y}^{t} ) 
       \nonumber\\
\stackrel{(a)}{\geq} & 
       - ( f(x, y_{t}) - f(x_{t}, y_{t}) ) + ( f(x_{t}, y) - f(x_{t}, y_{t}) ) 
       - ( x_{t} - \xtilde_{t} )^{\top} ( \partial_{x}^{t} - \Delta_{x}^{t} ) - ( \ytilde_{t} - y_{t} )^{\top} ( \partial_{y}^{t} - \Delta_{y}^{t} )  
       \nonumber\\
=    &
       f(x_t, y) - f(x, y_t)
       - ( x_{t} - \xtilde_{t} )^{\top} ( \partial_{x}^{t} - \Delta_{x}^{t} ) - ( \ytilde_{t} - y_{t} )^{\top} ( \partial_{y}^{t} - \Delta_{y}^{t} )  
       \nonumber\\
\RHS
=    & \frac{1}{2\eta_x} \Big\{ || x - x_{t} ||^{2} - || x - x_{t+1} ||^{2} + || x - \xtilde_{t} ||^{2} - || x - \xtilde_{t+1} ||^{2} \Big\} 
       + \frac{\eta_x}{2} \Big\{ || \Delta_{x}^{t} ||^{2} + || \partial_{x}^{t} - \Delta_{x}^{t} ||^{2} \Big\}   \nonumber\\
     & + \frac{1}{2\eta_y} \Big\{  || y - y_{t} ||^{2} - || y - y_{t+1} ||^{2}   + || y - \ytilde_{t} ||^{2} - || y - \ytilde_{t+1} ||^{2} \Big\} 
       + \frac{\eta_y}{2} \Big\{ || \Delta_{y}^{t} ||^{2} + || \partial_{y}^{t} - \Delta_{y}^{t} ||^{2} \Big\}  
    \nonumber\\
\stackrel{ (b) }{ \leq }
     & \frac{1}{2\eta_x} \Big\{ || x - x_{t} ||^{2} - || x - x_{t+1} ||^{2} + || x - \xtilde_{t} ||^{2} - || x - \xtilde_{t+1} ||^{2} \Big\} 
       + \frac{\eta_x}{2} \Big\{ 3 || \Delta_{x}^{t} ||^{2} + 2 || \partial_{x}^{t} ||^{2} \Big\}   \nonumber\\
     & + \frac{1}{2\eta_y} \Big\{  || y - y_{t} ||^{2} - || y - y_{t+1} ||^{2}   + || y - \ytilde_{t} ||^{2} - || y - \ytilde_{t+1} ||^{2} \Big\} 
       + \frac{\eta_y}{2} \Big\{ 3 || \Delta_{y}^{t} ||^{2} + 2 || \partial_{y}^{t} ||^{2} \Big\}  
% \leq & \frac{1}{2\eta_x} \Big\{ || x - x_{t} ||^{2} - || x - x_{t+1} ||^{2} + || x - \xtilde_{t} ||^{2} - || x - \xtilde_{t+1} ||^{2} \Big\} + \frac{5\eta_x M^2}{2}   \nonumber\\
%     &+ \frac{1}{2\eta_y} \Big\{  || y - y_{t} ||^{2} - || y - y_{t+1} ||^{2}  + || y - \ytilde_{t} ||^{2} - || y - \ytilde_{t+1} ||^{2} \Big\} + \frac{5 \eta_y B^2}{2},
\end{align}
% where the last inequality uses the facts that $|| \Delta_{x}^{t} || \leq M$, $|| \partial_{x}^{t} || \leq M$, $|| \Delta_{y}^{t} || \leq B$ and $|| \partial_{y}^{t} || \leq B$.
where inequality $(a)$ above is due to the convexity of $f(x, y_t)$ in $x$ and concavity of $f(x_t, y)$ in $y$.
Inequality $(b)$ is due to $(a+b)^2 \leq 2a^2 + 2b^2$.

Then we combine the LHS and RHS by summing up $t = 0, ..., T-1$:
\begin{align}
\label{eq1:combine_all2}
       \sum_{t=0}^{T-1} ( f(x_{t}, y) - f(x, y_{t}) )
\leq & \frac{1}{2\eta_x} \Big\{ || x - x_{0} ||^{2} - || x - x_{T} ||^{2} + || x - \xtilde_{0} ||^{2} - || x - \xtilde_{T} ||^{2}  \Big\} 
    %   + \frac{5\eta_x T M^2}{2}  
     \nonumber\\
     & \frac{1}{2\eta_y} \Big\{ || y - y_{0} ||^{2} - || y - y_{T} ||^{2} + || y - \ytilde_{0} ||^{2} - || y - \ytilde_{T} ||^{2} \Big\} 
    %   + \frac{5\eta_y T  B^2}{2} 
     \nonumber\\
     & + \frac{3 \eta_x}{2} \underbrace{ \sum_{t=1}^T || \Delta_{x}^{t} ||^{2}  }_{ :=A }
       + \eta_x \underbrace{ \sum_{t=1}^T || \partial_{x}^{t} ||^{2} }_{ := B }
     \nonumber\\
     & + \frac{3\eta_y}{2} \underbrace{ \sum_{t=1}^T || \Delta_{y}^{t} ||^{2} }_{ := C} 
       + \eta_y \underbrace{ \sum_{t=1}^T || \partial_{y}^{t} ||^{2} }_{ := D}
     \nonumber\\
     & + \underbrace{ \sum_{t=0}^{T-1} \Big( ( x_{t} - \xtilde_{t} )^{\top} ( \partial_{x}^{t} - \Delta_{x}^{t} ) + ( y_{t} - \ytilde_{t} )^{\top} ( \partial_{y}^{t} - \Delta_{y}^{t} ) \Big) }_{ := E} .
\end{align}

In the following, we show how to bound the above $A$ to $E$ terms.
To bound the above term $A$ in (\ref{eq1:combine_all2}), we apply Lemma \ref{lemma:hp_bound_sum_grad} as follows, which holds with probability $1 - \tilde \delta$,
\begin{align}\label{eq:upper_bound_term_A}
\sum_{t=1}^T \| \Delta_x^t \|^2 \leq B_1^2 ( T + \log(1/\tilde \delta) )  .
\end{align}
Similarly, term $C$ in (\ref{eq1:combine_all2}) can be bounded with probability $1 - \tilde \delta$ as follows
\begin{align}\label{eq:upper_bound_term_C}
\sum_{t=1}^T \| \Delta_y^t \|^2 \leq B_2^2 ( T + \log(1/\tilde \delta) )  .
\end{align}

To bound term $B$ of (\ref{eq1:combine_all2}), which contains only the full subgradients $\partial_x^t$, we have
\begin{align*}
% \label{eq:upper_bound_term_B}
\| \partial_x^t \|^2
= 
\| \E[\Delta_x^t] \|^2
\leq 
\E[ \| \Delta_x^t \|^2 ]
\leq B^2_1   ,
\end{align*}
where the first inequality is due to Jensen's inequality and the second inequality is due to
\begin{align*}
\exp( \E[\frac{ \| \Delta_x^t \|^2 }{ B_1^2 }] )
\leq
\E [ \exp(\frac{ \| \Delta_x^t \|^2 }{ B_1^2 }) ] \leq \exp(1)
~~\Rightarrow~~
\E[ \frac{ \| \Delta_x^t \|^2 }{ B_1^2 } ] \leq 1
~~\Rightarrow~~
\E [ \| \Delta_x^t \|^2 ] \leq B_1^2   .
\end{align*}
Therefore, we have 
\begin{align}\label{eq:upper_bound_term_B}
\sum_{t=1}^T \| \partial_x^t \|^2 \leq T B_1^2  .
\end{align}
Similarly, for term $D$ in (\ref{eq1:combine_all2}), we have 
\begin{align}\label{eq:upper_bound_term_D}
\sum_{t=1}^T \| \partial_y^t \|^2 \leq T B_2^2  .
\end{align}

To bound term $E$ of (\ref{eq1:combine_all2}), let $U_t = ( x_t - \tilde x_t )^\top( \partial_x^t - \Delta_x^t )$ 
and $V_t = ( y_t - \tilde y_t )^\top( \partial_y^t - \Delta_y^t )$ for $t \in \{ 0, ..., T-1 \}$, which are Martingale difference sequences.
We thus would like to use Lemma \ref{lemma:inner_product_term_hp_bound} to handle these terms.
To this end, we can first upper bound $|U_t|$ and $|V_t|$ as follows
\begin{align*}
|U_t| 
= &
| ( x_t - \tilde x_t )^\top( \partial_x^t - \Delta_x^t ) |
\leq 
\| x_t - x_0 + x_0 - \tilde x_t \| \cdot \| \partial_x^t - \Delta_x^t \|
\nonumber\\
\leq &
2R ( \| \partial_x^t \| + \| \Delta_x^t \| )
% \nonumber\\
\leq
2R ( B_1 + \| \Delta_x^t \| )   ,
\nonumber\\
| V_t |
= &
| ( y_t - \tilde y_t )^\top( \partial_y^t - \Delta_y^t ) |
\leq 
\| ( y_t - y_0 + y_0 - \tilde y_t \| \cdot \| \partial_y^t - \Delta_y^t ) \|
\nonumber\\
\leq &
2R ( \| \partial_y^t \| + \| \Delta_y^t ) \| )
\leq 
2R ( B_2 + \| \Delta_y^t ) \| )    .
\end{align*}
Then the above two inequalities implies that
\begin{align}\label{eq:inner_product_term_hp_bound_1}
\E_t[ \exp( \frac{ U_t^2 }{ 16 B_1^2 R^2 } ) ]
\leq &
\E_t[ \exp( \frac{ ( 2R ( B_1 + \| \Delta_x^t \| ) )^2 }{ 16 B_1^2 R^2 } ) ]
\stackrel{ ( a ) }{ \leq } 
\E_t[ \exp( \frac{ 4R^2 ( 2 B_1^2 + 2 \| \Delta_x^t \|^2 )  }{ 16 B_1^2 R^2 } ) ]
\nonumber\\
= &
\E_t[ \exp( \frac{ B_1^2 + \| \Delta_x^t \|^2   }{ 2 B_1^2 } ) ]
=
\E_t[ \exp( \frac{1}{2} + \frac{ \| \Delta_x^t \|^2   }{ 2 B_1^2 } ) ]
\nonumber\\
= &
\exp( \frac{1}{2} ) \cdot \E_t \Big[ \sqrt{ \exp( \frac{ \| \Delta_x^t \|^2 }{ B_1^2 } ) } \Big]
\stackrel{ ( b ) }{ \leq }
\exp( \frac{1}{2} ) \cdot \sqrt{ \E_t[ \exp( \frac{ \| \Delta_x^t \|^2   }{ B_1^2 } ) ] }
\nonumber\\
\stackrel{ ( c ) }{ \leq } &
\exp( \frac{1}{2} ) \sqrt{ \exp(1) }
= 
\exp(1)   ,
\end{align}
where inequality $(a)$ is due to $(a+b)^2 \leq 2 a^2 + 2 b^2$,
inequality $(b)$ is due to the concavity of $\sqrt{\cdot}$ and Jensen's inequality.
Inequality $(c)$ is due to the assumption.
In a similar way, we have
\begin{align}\label{eq:inner_product_term_hp_bound_2}
\E_t [ \exp ( \frac{ V_t^2 }{ 16 B_2^2 R^2 } ) ] 
\leq 
\exp(1).
\end{align}

Next, applying Lemma \ref{lemma:inner_product_term_hp_bound} with (\ref{eq:inner_product_term_hp_bound_1}) and (\ref{eq:inner_product_term_hp_bound_2}), we have with probability at least $1 - \tilde \delta$
\begin{align}\label{eq:inner_product_upper_bound}
\sum_{t=0}^{T-1} U_t \leq 4 B_1 R \sqrt{3 \log( 1 / \tilde \delta ) T}  ,
\nonumber\\
\sum_{t=0}^{T-1} V_t \leq 4 B_2 R \sqrt{3 \log( 1 / \tilde \delta ) T}  .
\end{align}
% Plugging the above two inequalities into (\ref{eq1:combine_all2}).

For LHS of (\ref{eq1:combine_all2}), by Jensen's inequality, we have
\begin{align}\label{eq:jensens}
\sum_{t=0}^{T-1} ( f(x_{t}, y) - f(x, y_{t}) )
\geq 
T ( f(\bar x, y) - f(x, \bar y) ) ,
\end{align}
where $\bar x = \frac{1}{T} \sum_{t=0}^{T-1} x_t$ and $\bar y = \frac{1}{T} \sum_{t=0}^{T-1}$.

% For term $(a)$ of (\ref{eq1:combine_all2}), we have 
% \begin{align*}
% \E[ \exp \Big( \frac{ \sum_{t=1}^T \| \Delta_x^t \|^2 }{ B_1^2 } \Big) ]
% = &
% \E [ \E_T [ \exp \Big( \frac{ \sum_{t=1}^{T-1} \| \Delta_x^t \|^2 + \| \Delta_x^t \|^2 }{ B_1^2 } \Big) ] ]
% \nonumber\\
% = &
% \E [ \exp \Big( \frac{ \sum_{t=1}^{T-1} \| \Delta_x^t \|^2 }{ B_1^2 } \Big) 
% \cdot
% \E_T [ \exp \Big( \frac{ \| \Delta_x^T \|^2 }{ B_1^2 } \Big) ] ]
% \nonumber\\
% \stackrel{(a)}{\leq} &
% \E [ \exp \Big( \frac{ \sum_{t=1}^{T-1} \| \Delta_x^t \|^2 }{ B_1^2 } \Big)
% \cdot \exp(1) ]
% \nonumber\\
% \leq & 
% \E [ \exp \Big( \frac{ \sum_{t=1}^{T-2} \| \Delta_x^t \|^2 }{ B_1^2 } \Big)
% \cdot \exp(2) ]
% \nonumber\\
% \leq &
% \exp(T)
% \end{align*}
% where the above inequality $(a)$ is due to the assumption.
% Then by Markov inequality, i.e., $P(X \geq a) \leq \frac{ \E[X] }{ a }$ for a random variable $X$, we have with probability at least $1 - \delta$,
% \begin{align*}
% \exp \Big(\frac{ \sum_{t=1}^T \| \Delta_x^t \|^2 }{ B_1^2 } \Big)
% \leq &
% \frac{ \E [ \exp \Big(\frac{ \sum_{t=1}^T \| \Delta_x^t \|^2 }{ B_1^2 } \Big) ] }{ \delta }
% \leq 
% \frac{ \exp(T) }{ \delta }
% \\
% \Rightarrow 
% \sum_{t=1}^T \| \Delta_x^t \|^2 
% \leq &
% \frac{ B_1^2 T }{ \log(1/\delta) }   .
% \end{align*}

Suppose $T \geq 1$.
By plugging (\ref{eq:jensens}), (\ref{eq:upper_bound_term_A}), (\ref{eq:upper_bound_term_C}), (\ref{eq:upper_bound_term_B}), (\ref{eq:upper_bound_term_D}) and (\ref{eq:inner_product_upper_bound}) back into (\ref{eq1:combine_all2}), with probability at least $1 - \tilde \delta$, we have
\begin{align}
\label{eq1:combine_all3}
       f(\bar x, y) - f(x, \bar y) 
\leq & \frac{ \| x - x_{0} \|^{2} }{ \eta_x T } 
       + \frac{ \| y - y_{0} \|^{2} }{ \eta_y T } 
       + \frac{ \eta_x B_1^2 }{ 2 } ( 5 + 3 \log(1/\tilde \delta) )
       + \frac{ \eta_y B_2^2 }{ 2 } ( 5 + 3 \log(1/\tilde \delta) )
     \nonumber\\
     & + \frac{ 4 ( B_1 + B_2 ) R \sqrt{ 3 \log(1/\tilde\delta) } }{ \sqrt{T} } 
\end{align}

\end{proof}

\section{Proof of Lemma \ref{lemma:hp_bound_sum_grad} }

\begin{proof}  % proof of bounded stochastic gradient with high probability
% for \ref{lemma:hp_bound_sum_grad}
First, we start from
\begin{align*}
\E[ \exp( \frac{ \sum_{t=1}^T X_t }{ B^2 } ) ]
= &
\E[ \E_T[ \exp( \frac{ \sum_{t=1}^{T-1} X_t + X_T }{ B^2 } ) ] ]
\nonumber\\
= &
\E[ \exp( \frac{ \sum_{t=1}^{T-1} X_t }{ B^2 } ) \cdot \E_T[ \exp(\frac{X_T}{B^2}) ] ]
\nonumber\\
\leq &
\E[ \exp( \frac{ \sum_{t=1}^{T-1} X_t }{ B^2 } ) \cdot \exp(1) ]
\nonumber\\
\leq &
\E[ \exp( \frac{ \sum_{t=1}^{T-2} X_t }{ B^2 } ) \cdot \exp(2) ]
\nonumber\\
\leq & 
\exp(T)    ,
\end{align*}
where the first inequality is due to the assumption.

Markov inequality indicates that $P( X \geq a ) \leq \frac{ \E[X] }{ a }$ for a random variable $X$,
which, by additionally introducing $\tilde \delta$, leads to 
\begin{align*}
P \Big( \exp( \frac{ \sum_{t=1}^T X_t }{ B^2 } ) \geq \frac{ \E[ \exp( \frac{ \sum_{t=1}^T X_t }{ B^2 } ) ] }{ \tilde \delta } \Big) 
\leq 
\tilde \delta  .
\end{align*}
Therefore, with probability at least $1 - \tilde \delta$, we have
\begin{align*}
&
\exp( \frac{ \sum_{t=1}^T X_t }{ B^2 } ) 
\leq 
\frac{ \E [ \exp( \frac{ \sum_{t=1}^T X_t }{ B^2 } ) ] }{ \tilde \delta }
\leq
\frac{ \exp(T) }{ \tilde \delta }
\\
\Rightarrow &
\frac{ \sum_{t=1}^T X_t }{ B^2 }
\leq 
\log( \frac{\exp(T) }{ \tilde \delta } )
=
\log( \exp(T) ) + \log( 1 / \tilde \delta )
=
T + \log( 1 / \tilde \delta )   
\\
\Rightarrow &
\sum_{t=1}^T X_t 
\leq 
B^2 ( T + \log( 1 / \tilde \delta ) )  .
\end{align*}
\end{proof}

\section{Proof of Lemma~\ref{lem:conditions_for_primaldual_gap} }
\label{section_proof:lem:conditions_for_primaldual_gap}

\begin{proof}

Here we consider the following problem
\begin{align*}
\min_{x \in X \cap \calB(x_0, R)} \max_{y \in Y \cap \calB(y_0, R)} f(x, y) 
\end{align*}
with two solutions $(x_0, y_0)$ and $(\bar x, \bar y)$.

By (\ref{lemma:key_lemma}) of Lemma \ref{lemma:key_lemma}, 
we have 
\begin{align}\label{eq:key_lemma_to_scsc}
\frac{\mu}{4} \| \hat x_R(\bar y) - x_0 \|^2
+ \frac{\lambda}{4} \| \hat y_R(\bar x) - y_0 \|^2
\leq &
\underbrace{ \max_{y' \in Y \cap \calB(y_0, R)} f(x_0, y') - \min_{x \in X \cap \calB(x_0, R)} f(x', y_0) }_{ := A }
\nonumber\\
&
+ \underbrace{ \max_{y' \in Y \cap \calB(y_0, R)} f(\bar x, y') - \min_{x \in X \cap \calB(x_0, R)} f(x', \bar y) }_{ := B }   .
\end{align}

We can bound the above term $A$ as follows
\begin{align}\label{eq:key_lemma_to_scsc_term_A}
&
\max_{ y' \in Y \cap \calB(y_0, R) } f(x_0, y') - \min_{ x \in X \cap \calB(x_0, R) } f(x', y_0)
\nonumber\\
\leq &
\max_{ y' \in Y } f(x_0, y') - \min_{ x \in X } f(x', y_0)
\leq 
\frac{ \min\{ \mu, \lambda \} R^2 }{ 8 }   ,
\end{align}
where the last inequality is due to the setting of $R$.

Recall the definitions 
$$
\hat x_R(\bar y) = \arg\min_{x' \in x \cap \calB(x_0, R)} f(x', \bar y)  ,
$$ 
$$
\hat y_R(\bar x) = \arg\max_{y' \in Y \cap \calB(y_0, R)} f(\bar x, y')  .
$$
To Bound term $B$ in (\ref{eq:key_lemma_to_scsc}), we apply Lemma \ref{lemma:convergence_RSPDsc_per_stage} as follows
\begin{align}\label{eq:key_lemma_to_scsc_term_B}
&
\max_{y' \in Y \cap \calB(y_0, R)} f(\bar x, y') - \min_{x' \in \cap \calB(x_0, R)} f(x, \bar y)
\nonumber\\
\leq & 
\frac{ \| \hat x_R(\bar y) - x_0 \|^2 }{ \eta_x T }
+ \frac{ \| \hat y_R(\bar x) - y_0 \|^2 }{ \eta_y T }
+ \frac{ \eta_x B_1^2 }{ 2 } ( 5 + 3 \log(1 / \tilde \delta) )
+ \frac{ \eta_y B_2^2 }{ 2 } ( 5 + 3 \log(1 / \tilde \delta) )
\nonumber\\
&
+ \frac{ 4 ( B_1 + B_2 ) R \sqrt{2 \log(1 / \tilde \delta)} }{ \sqrt{T} }
\nonumber\\
\leq &
\frac{ R^2 }{ \eta_x T }
+ \frac{ R^2 }{ \eta_y T }
+ \frac{ \eta_x B_1^2 }{ 2 } ( 5 + 3 \log(1 / \tilde \delta) )
+ \frac{ \eta_y B_2^2 }{ 2 } ( 5 + 3 \log(1 / \tilde \delta) )
\nonumber\\
&
+ \frac{ 4 ( B_1 + B_2 ) R \sqrt{2 \log(1 / \tilde \delta)} }{ \sqrt{T} }
\nonumber\\
\leq &
\frac{ \min\{ \mu, \lambda \} R^2 }{ 16 }   ,
\end{align}
where the last inequality holds with probability at least $1 - \tilde \delta$ with the setting of $\eta_x$, $\eta_y$ and $T$ as follows
\begin{align}\label{eq:set_eta_and_T}
\eta_x = &
           \frac{ \min \{ \mu, \lambda \} R^2}{ 40 ( 5 + 3 \log(1/\tilde \delta) ) B_1^2 }    ,
\eta_y =
           \frac{ \min \{ \mu, \lambda \} R^2}{ 40 ( 5 + 3 \log(1/\tilde \delta) ) B_2^2 }
\nonumber\\
T \geq &
           \frac{ \max \Big\{ 
           320^2 (B_1 + B_1)^2 3 \log( 1 / \tilde \delta ), 
           3200 ( 5 + 3 \log(1/\tilde \delta) ) \max\{ B_1^2, B_2^2 \} 
           \Big\} }{ \min \{ \mu, \lambda \}^2 R^2 }   .
\end{align}

Finally, we use (\ref{eq:key_lemma_to_scsc_term_A}) and (\ref{eq:key_lemma_to_scsc_term_B}) to bound term $A$ and term $B$ in (\ref{eq:key_lemma_to_scsc}) as follows
\begin{align*}
\frac{ \mu }{ 4 } \| \hat x_R(\bar y) - x_0 \|^2 
+ \frac{ \lambda }{ 4 } \| \hat y_R(\bar x) - y_0 \|^2
\leq &
\frac{ \min\{ \mu, \lambda \} R^2 }{ 8 }
+ \frac{ \min\{ \mu, \lambda \} R^2 }{ 16 }
=
\frac{ 3 \min\{ \mu, \lambda \} R^2 }{ 16 }
\\
< &
\frac{ \min\{ \mu, \lambda \} R^2 }{ 4 }   .
\end{align*}
It implies 
$$
\| \hat x_R(\bar y) - x_0 \| < R  ,
$$
$$
\| \hat y_R(\bar x) - y_0 \| < R  ,
$$
which shows $\hat x_R(\bar y)$ and $\hat y_R(\bar y)$ are interior points of $\calB(x_0, R)$ and $\calB(y_0, R)$, respectively,
so that $\hat x_R(\bar y) = \hat x(\bar y)$ and $\hat y_R(\bar x) = \hat y(\bar x)$.

\end{proof}

\section{Proof of Theorem~\ref{thm:tot_iter_scsc} }
\label{section_proof:thm:tot_iter_scsc}

\begin{proof}
Let 
\begin{align*}
T_1 = &
        \frac{ \max \Big\{ 320^2 (B_1 + B_2)^2 3 \log( 1 / \tildedelta ), 3200 ( 3 \log( 1 / \tilde \delta ) + 2 ) \max\{ B_1^2, B_2^2 \} \Big\} }{ \min \{ \mu, \lambda \}^2 R_1^2 }   ,
\end{align*}
where $\Gap(x_0, y_0) = \max_{y \in Y} f(x_0, y) - \min_{x \in X} f(x, y_0) 
\leq
\epsilon_0$
and 
$R_1 \geq 2 \sqrt{ \frac{ 2 \epsilon_0 }{ \min\{ \mu, \lambda \} } }.$

Given $T_{k+1} = 2T_k$ in Algorithm \ref{alg:RSPD_scsc} and $K = \lceil \log( \frac{ \epsilon_0 }{ \epsilon } ) \rceil$, the total number of iterations can be computed by
\begin{align*}
T_{tot} 
= &
\sum_{k=1}^{K} T_k 
=      
T_1 \sum_{k=1}^{K} 2^{k - 1} 
=
T_1 ( 2^K - 1 )
\leq
T_1 2^{ \lceil \log(\frac{\epsilon_0}{\epsilon}) \rceil } 
\leq 
T_1 \frac{ 2 \epsilon_0 }{ \epsilon }
\nonumber\\
= &
\frac{ \max \Big\{ 320^2 (B_1 + B_2)^2 3 \log( 1 / \tildedelta ), 3200 ( 3 \log( 1 / \tilde \delta ) + 2 ) \max\{ B_1^2, B_2^2 \} \Big\} }{ \min \{ \mu, \lambda \}^2 R_1^2 } 
\cdot \frac{ 2 \epsilon_0 }{ \epsilon } 
\nonumber\\
\leq & 
\frac{ \max \Big\{ 320^2 (B_1 + B_2)^2 3 \log( 1 / \tildedelta ), 3200 ( 3 \log( 1 / \tilde \delta ) + 2 ) \max\{ B_1^2, B_2^2 \} \Big\} }{ 8 \min \{ \mu, \lambda \} \epsilon_0 } 
\cdot \frac{ 2 \epsilon_0 }{ \epsilon } 
\nonumber\\
= &
\frac{ \max \Big\{ 320^2 (B_1 + B_2)^2 3 \log(\frac{1}{\tildedelta}), 3200 ( 3 \log(1/\tilde \delta) + 2 ) \max\{ B_1^2, B_2^2 \} \Big\} }{ 4 \min\{ \mu, \lambda \} \epsilon }
\end{align*}
\end{proof}

\section{Proof of Lemma~\ref{lemma:inner_loop_wcc} }
\label{section_proof:lemma:inner_loop_wcc}

\begin{proof}
In this proof, we focus on the analysis of one inner loop and thus omit the index of $k$ for simpler presentation.
Let $\Delta_x^t = \partial_x f(x_t, y_t; \xi_t)$, $\Delta_y^t = \partial_y f(x_t, y_t; \xi^t)$, 
$\partial_x^t = \partial_x f(x_t, y_t)$ and $\partial_y^t = \partial_y f(x_t, y_t)$.
Denote $\hat f(x, y) = f(x, y) + \frac{\gamma}{2} \| x - x_0 \|^2$.

Let $\psi_x^t(x) = x^\top \Delta_x^t + \frac{1}{2 \eta_x} \| x - x_t \|^2 + \frac{\gamma}{2} \| x - x_0 \|^2$
and
$\psi_y^t(y) = -y^\top \Delta_y^t + \frac{1}{2 \eta_y} \| y - y_t \|^2$.
According to the update of $x_{t+1}$ and $y_{t+1}$, we have
$x_{t+1} = \arg\min_{x \in X} \psi_x^t(x)$ and $y_{t+1} = \arg\max_{y \in Y} \psi_y^t(y)$.
It is easy to verify that $\psi_x^t$ and $\psi_y^t$ are strongly convex in $x$ and $y$, respectively.

By $\Big(\frac{1}{\eta_x} + \gamma\Big)$-strong convexity of $\psi_x^t(x)$ and the optimality condition at $x_{t+1}$, we have
\begin{align}\label{eq:update_strong_convex_x}
&
\Big( \frac{1}{2 \eta_x} + \frac{\gamma}{2} \Big) \| x - x_{t+1} \|^2
\leq 
\psi_x^t(x) - \psi_x^t(x_{t+1})
\nonumber\\
=
&
x^\top \Delta_x^t + \frac{1}{2\eta_x} \| x - x_t \|^2 + \frac{\gamma}{2} \| x - x_0 \|^2
- \Big(  x_{t+1}^\top \Delta_x^t + \frac{1}{2\eta_x} \| x_{t+1} - x_t \|^2 + \frac{\gamma}{2} \| x_{t+1} - x_0 \|^2  \Big)
\nonumber\\
= &
( x - x_t )^\top \partial_x^t 
+ ( x_t - x_{t+1} )^\top \partial_x^t
+ ( x - x_{t+1} )^\top ( \Delta_x^t - \partial_x^t )
\nonumber\\
&
+ \frac{1}{2\eta_x} \| x - x_t \|^2
+ \frac{\gamma}{2} \| x - x_0 \|^2
- \frac{1}{2\eta_x} \| x_{t+1} - x_t \|^2
- \frac{\gamma}{2} \| x_{t+1} - x_0 \|^2
\nonumber\\
\stackrel{(a)}{\leq}
&
f(x, y_t) - f(x_t, y_t) + \frac{\gamma}{2} \| x - x_0 \|^2 - \frac{\gamma}{2} \| x_t - x_0 \|^2
+ \frac{\gamma}{2} \Big( \| x_t - x_0 \|^2 - \| x_{t+1} - x_0 \|^2  \Big)
\nonumber\\
&
+ \Big( \frac{1}{2\eta_x} + \frac{\rho}{2} \Big) \| x - x_t \|^2
+ ( x - x_t )^\top ( \Delta_x^t - \partial^t )
+ ( x_t - x_{t+1} )^\top \Delta_x^t
- \frac{1}{2\eta_x} \| x_{t+1} - x_t \|^2
\nonumber\\
\stackrel{(b)}{\leq}
&
\hat f(x, y_t) - \hat f(x_t, y_t) 
+ \frac{\gamma}{2} \Big( \| x_t - x_0 \|^2 - \| x_{t+1} - x_0 \|^2  \Big)
\nonumber\\
&
+ \Big( \frac{1}{2\eta_x} + \frac{\rho}{2} \Big) \| x - x_t \|^2
+ ( x - x_t )^\top ( \Delta_x^t - \partial^t )
+ \frac{\eta_x}{2} \| \Delta_x^t \|^2  ,
\end{align}
where inequality $(a)$ is due to $\rho$-weakly convexity of $f$ in $x$.
Inequality $(b)$ is due to Young's inequality,
i.e.,
$( x_t - x_{t+1} )^\top \Delta_x^t
- \frac{1}{2\eta_x} \| x_{t+1} - x_t \|^2 \leq \frac{\eta_x}{2} \| \Delta_x^t \|^2$.

Similarly, due to the $\frac{1}{\eta_y}$-strong convexity of $\psi_y^t$ in $y$ and the optimality condition of $y_{t+1}$, we have
\begin{align}\label{eq:update_strong_convex_y}
&
\frac{1}{2\eta_y} \| y - y_{t+1} \|^2
\leq
\psi_y^t(y) - \psi_y^t(y_{t+1})
\nonumber\\
= &
-y^\top \Delta_y^t + \frac{1}{2 \eta_y} \| y - y_t \|^2
- \Big(  -y_{t+1}^\top \Delta_y^t + \frac{1}{2 \eta_y} \| y_{t+1} - y_t \|^2  \Big)
\nonumber\\
= &
( y_t - y )^\top \partial_y^t 
+ ( y_{t+1} - y_t )^\top \partial_y^t
+ ( y_{t+1} - y )^\top(\Delta_y^t - \partial_y^t)
\nonumber\\
&
+ \frac{1}{2\eta_y} \| y - y_t \|^2
- \frac{1}{2\eta_y} \| y_{t+1} - y_t \|^2
\nonumber\\
\stackrel{(a)}{\leq} &
f(x_t, y_t) - f(x_t, y)
+ ( y_{t+1} - y_t )^\top \Delta_y^t
+ ( y_t - y )^\top ( \Delta_y^t - \partial_y^t )
\nonumber\\
&
+ \frac{1}{2\eta_y} \| y - y_t \|^2
- \frac{1}{2\eta_y} \| y_{t+1} - y_t \|^2
\nonumber\\
\stackrel{(b)}{\leq}
&
\hat f(x_t, y_t) - \hat f(x_t, y)
+ ( y_t - y )^\top ( \Delta_y^t - \partial_y^t ) 
+ \frac{1}{2\eta_y} \| y - y_t \|^2
+ \frac{\eta_y}{2} \| \Delta_y^t \|^2   ,
\end{align}
where inequality $(a)$ is due to concavity of $f$ in $y$.
Inequality $(b)$ is due to Young's inequality, i.e.,
$( y_{t+1} - y_t )^\top \Delta_y^t - \frac{1}{2\eta_y} \| y_{t+1} - y_t \|^2 \leq \frac{\eta_y}{2} \| \Delta_y^t \|^2.$

Combining (\ref{eq:update_strong_convex_x}) and (\ref{eq:update_strong_convex_y}), we have
\begin{align}\label{eq:inner_combine1}
&
\hat f(x_t, y) - \hat f(x, y_t)
\leq 
\frac{\eta_x}{2} \| \Delta_x^t \|^2
+ 
\frac{\eta_y}{2} \| \Delta_y^t \|^2
\nonumber\\
&
+ ( x - x_t )^\top( \Delta_x^t - \partial^t )
+ ( y_t - y )^\top ( \Delta_y^t - \partial^t )
+ \frac{\gamma}{2} \Big( \| x_t - x_0 \|^2 - \| x_{t+1} - x_0 \|^2 \Big)
\nonumber\\
&
+ \Big( \frac{1}{2\eta_x} + \frac{\rho}{2} \Big) \| x - x_t \|^2 - \Big( \frac{1}{2\eta_x} + \frac{\gamma}{2} \Big) \| x - x_{t+1} \|^2 
+ \frac{1}{2\eta_y} \Big( \| y - y_t \|^2 - \| y - y_{t+1} \|^2 \Big)   .
\end{align}

Now we do not take expectation, since we aim to eliminate the randomness of $x$ and $y$ in 
$( x - x_t )$ and
$( y_t - y )$, respectively.
To achieve this, we use the following updates
\begin{align*}
\tilde x_{t+1} & = \arg\min_{x \in X}  x^\top ( \partial_x^t - \Delta_x^t ) + \frac{1}{2\eta_x} \| x - \tilde x_t \|^2
\\
\tilde y_{t+1} & = \arg\min_{y \in Y} -y^\top ( \partial_y^t - \Delta_y^t ) + \frac{1}{2\eta_y} \| y - \tilde y_t \|^2   ,
\end{align*}
where $\tilde x_0 = x_0$ and $\tilde y_0 = y_0$.

Using similar analysis as the beginning, we have
\begin{align*}
\frac{1}{2\eta_x} \| x - \tilde x_{t+1} \|^2
\leq
&
x^\top ( \partial_x^t - \Delta_x^t ) + \frac{1}{2\eta_x} \| x - \tilde x_t \|^2
- \Big( \tilde x_{t+1}^\top ( \partial_x^t - \Delta_x^t ) + \frac{1}{2\eta_x} \| \tilde x_{t+1} - \tilde x_t \|^2 \Big)
\\
=
&
( \tilde x_t - x )^\top( \Delta_x^t - \partial_x^t ) + \frac{1}{2\eta_x} \| x - \tilde x_t \|^2
+ ( \tilde x_t - \tilde x_{t+1} )^\top( \partial_x^t - \Delta_x^t ) 
- \frac{1}{2\eta_x} \| \tilde x_{t+1} - \tilde x_t \|^2
\\
\leq &
( \tilde x_t - x )^\top( \Delta_x^t - \partial_x^t ) + \frac{1}{2\eta_x} \| x - \tilde x_t \|^2
+ \frac{\eta_x}{2} \| \partial_x^t - \Delta_x^t \|^2
\\
\leq &
( \tilde x_t - x )^\top( \Delta_x^t - \partial_x^t ) + \frac{1}{2\eta_x} \| x - \tilde x_t \|^2
+ \eta_x \| \partial_x^t \|^2 + \eta_x \| \Delta_x^t \|^2  .
\end{align*}

We could also derive the similar result for $y$ as follows
\begin{align*}
\frac{1}{2\eta_y} \| y - \tilde y_{t+1} \|^2
\leq
&
- y^\top ( \partial_y^t - \Delta_y^t ) + \frac{1}{2\eta_y} \| y - \tilde y_t \|^2
- \Big( - \tilde y_{t+1}^\top ( \partial_y^t - \Delta_y^t ) 
        + \frac{1}{2\eta_y} \| \tilde y_{t+1} - \tilde y_t \|^2 \Big)
\\
=
&
( y - \tilde y_t )^\top( \Delta_y^t - \partial_y^t ) 
+ \frac{1}{2\eta_y} \| y - \tilde y_t \|^2
+ ( \tilde y_{t+1} - \tilde y_t )^\top( \partial_y^t - \Delta_y^t ) 
- \frac{1}{2\eta_y} \| \tilde y_{t+1} - \tilde y_t \|^2
\\
\leq &
( y - \tilde y_t )^\top( \Delta_y^t - \partial_y^t ) + \frac{1}{2\eta_y} \| y - \tilde y_t \|^2
+ \frac{\eta_y}{2} \| \partial_y^t - \Delta_y^t \|^2
\\
\leq &
( y - \tilde y_t )^\top( \Delta_y^t - \partial_y^t ) + \frac{1}{2\eta_y} \| y - \tilde y_t \|^2
+ \eta_y \| \partial_y^t \|^2 + \eta_y \| \Delta_y^t \|^2  .
\end{align*}

Summing the above two inequalities, we have
\begin{align}\label{eq:inner_combine2}
0
\leq
&
\frac{1}{2\eta_x} \Big( \| x - \tilde x_t \|^2 - \| x - \tilde x_{t+1} \|^2\Big) 
+ ( \tilde x_t - x )^\top( \Delta_x^t - \partial_x^t ) 
+ \eta_x \| \partial_x^t \|^2 + \eta_x \| \Delta_x^t \|^2 
\nonumber\\
&
+ \frac{1}{2\eta_y} \Big( \| y - \tilde y_t \|^2 - \| y - \tilde y_{t+1} \|^2 \Big)
+ ( y - \tilde y_t )^\top( \Delta_y^t - \partial_y^t ) 
+ \eta_y \| \partial_y^t \|^2 + \eta_y \| \Delta_y^t \|^2
\end{align}

Combining (\ref{eq:inner_combine1}) and (\ref{eq:inner_combine2}), we have
\begin{align*}
&
\hat f(x_t, y) - \hat f(x, y_t)
\leq 
\frac{\eta_x}{2} \| \Delta_x^t \|^2
+ 
\frac{\eta_y}{2} \| \Delta_y^t \|^2
\\
&
+ ( x - x_t )^\top( \Delta_x^t - \partial_x^t )
+ ( y_t - y )^\top ( \Delta_y^t - \partial_y^t )
+ \frac{\gamma}{2} \Big( \| x_t - x_0 \|^2 - \| x_{t+1} - x_0 \|^2 \Big)
\\
&
+ \Big( \frac{1}{2\eta_x} + \frac{\rho}{2} \Big) \| x - x_t \|^2 
- \Big( \frac{1}{2\eta_x} + \frac{\gamma}{2} \Big) \| x - x_{t+1} \|^2 
+ \frac{1}{2\eta_y} \Big( \| y - y_t \|^2 - \| y - y_{t+1} \|^2 \Big) 
\\
&
+ \frac{1}{2\eta_x} \Big( \| x - \tilde x_t \|^2 - \| x - \tilde x_{t+1} \|^2\Big) 
+ ( \tilde x_t - x )^\top( \Delta_x^t - \partial_x^t ) 
+ \eta_x \| \partial_x^t \|^2 + \eta_x \| \Delta_x^t \|^2 
\nonumber\\
&
+ \frac{1}{2\eta_y} \Big( \| y - \tilde y_t \|^2 - \| y - \tilde y_{t+1} \|^2 \Big)
+ ( y - \tilde y_t )^\top( \Delta_y^t - \partial_y^t ) 
+ \eta_y \| \partial_y^t \|^2 + \eta_y \| \Delta_y^t \|^2
\\
= &
  \frac{3 \eta_x}{2} \| \Delta_x^t \|^2 
+ \eta_x \| \partial_x^t \|^2
+ \frac{3 \eta_y}{2} \| \Delta_y^t \|^2
+ \eta_y \| \partial_y^t \|^2
\\
&
+ ( \tilde x_t - x_t )^\top( \Delta_x^t - \partial_x^t )
+ ( y_t - \tilde y_t )^\top ( \Delta_y^t - \partial_y^t )
+ \frac{\gamma}{2} \Big( \| x_t - x_0 \|^2 - \| x_{t+1} - x_0 \|^2 \Big)
\\
&
+ \Big( \frac{1}{2\eta_x} + \frac{\rho}{2} \Big) \| x - x_t \|^2 
- \Big( \frac{1}{2\eta_x} + \frac{\gamma}{2} \Big) \| x - x_{t+1} \|^2  
+ \frac{1}{2\eta_y} \Big( \| y - y_t \|^2 - \| y - y_{t+1} \|^2 \Big) 
\\
&
+ \frac{1}{2\eta_x} \Big( \| x - \tilde x_t \|^2 - \| x - \tilde x_{t+1} \|^2\Big) 
+ \frac{1}{2\eta_y} \Big( \| y - \tilde y_t \|^2 - \| y - \tilde y_{t+1} \|^2 \Big)
\end{align*}

Summing the above inequality from $t = 0$ to $T-1$ and using Jensen's inequality, we have
\begin{align*}
&
T \Big( \hat f(\bar x, y) - \hat f(x, \bar y) \Big)
\leq
\sum_{t=0}^{T-1} \Big( \hat f(x_t, y) - \hat f(x, y_t) \Big)
\\
\leq &
\frac{\eta_x}{2} \sum_{t=0}^{T-1} ( 3 \| \Delta_x^t \|^2 + 2 \| \partial_x^t \|^2 )
+ 
\frac{\eta_y}{2} \sum_{t=0}^{T-1} ( 3 \| \Delta_y^t \|^2 + 2 \| \partial_y^t \|^2 )
\\
&
+ \sum_{t=0}^{T-1} ( \tilde x_t - x_t )^\top( \Delta_x^t - \partial_x^t )
+ \sum_{t=0}^{T-1} ( y_t - \tilde y_t )^\top ( \Delta_y^t - \partial_y^t )
+ \frac{\gamma}{2} \Big( \| x_0 - x_0 \|^2 - \| x_T - x_0 \|^2 \Big)
\\
&
+ \Big( \frac{1}{2\eta_x} + \frac{\rho}{2} \Big) \| x - x_0 \|^2 
- \Big( \frac{1}{2\eta_x} + \frac{\gamma}{2} \Big) \| x - x_T \|^2 
+ \frac{1}{2\eta_y} \Big( \| y - y_0 \|^2 - \| y - y_T \|^2 \Big)   
\\
&
+ \frac{1}{2\eta_x} \Big( \| x - x_0 \|^2 - \| x - \tilde x_T \|^2\Big) 
+ \frac{1}{2\eta_y} \Big( \| y - y_0 \|^2 - \| y - \tilde y_T \|^2 \Big)
\end{align*}
where $\bar x = \frac{1}{T} \sum_{t=0}^{T-1} x_t$
and
$\bar y = \frac{1}{T} \sum_{t=0}^{T-1} y_t$.

Plugging in $x = \hat x(\bar y)$ and $y = \hat y(\bar x)$, we have
\begin{align*}
&
\hatGap (\bar x, \bar y)
=
\hat f(\bar x, \hat y(\bar x)) - \hat f(\hat x(\bar y), \bar y) 
\leq
\frac{1}{T} \sum_{t=0}^{T-1} \Big( \hat f(x_t, \hat y(\bar x)) - \hat f(\hat x(\bar y), y_t) \Big)
\\
\leq &
\frac{\eta_x}{2T} \sum_{t=0}^{T-1} ( 3 \| \Delta_x^t \|^2 + 2 \| \partial_x^t \|^2 )
+ 
\frac{\eta_y}{2T} \sum_{t=0}^{T-1} ( 3 \| \Delta_y^t \|^2 + 2 \| \partial_y^t \|^2 )
\\
&
+ \frac{1}{T} \sum_{t=0}^T ( \tilde x_t - x_t )^\top( \Delta_x^t - \partial_x^t )
+ \frac{1}{T} \sum_{t=0}^T ( y_t - \tilde y_t )^\top ( \Delta_y^t - \partial_y^t )
\\
&
+ \frac{1}{T} \Big( \frac{1}{\eta_x} + \frac{\rho}{2} \Big) \| \hat x(\bar y) - x_0 \|^2 
+ \frac{1}{\eta_y T} \| \hat y(\bar x) - y_0 \|^2  
\end{align*}

Taking expectation over both sides and recalling that $\E[ \| \partial_x f(x,y;\xi) \|^2 ] \leq M_1^2$ and $\E[ \| \partial_y f(x,y;\xi) \|^2 ] \leq M_2^2$, we have
\begin{align*}
&
\E [ \hatGap(\bar x, \bar y) ]
=
\E [ \hat f(\bar x, \hat y(\bar x)) - \hat f(\hat x(\bar y), \bar y) ]
\leq
\frac{1}{T} \sum_{t=0}^{T-1} \E [ \hat f(x_t, \hat y(\bar x)) - \hat f(\hat x(\bar y), y_t) ]
\\
\leq &
\frac{5 \eta_x M_1^2}{2} 
+ 
\frac{5 \eta_y M_2^2}{2} 
+ \frac{1}{T} \Big( \frac{1}{\eta_x} 
+ \frac{\rho}{2} \Big) \E [ \| \hat x(\bar y) - x_0 \|^2 ]
+ \frac{1}{\eta_y T} \E [ \| \hat y(\bar x) - y_0 \|^2 ]   .
\end{align*}

\end{proof}

\section{Proof of Lemma \ref{lemma:lhs_hat_gap}}

Before proving Lemma \ref{lemma:lhs_hat_gap}, we first state the following lemma, whose proof is in the next section.
\begin{lem}
\label{lemma:three_cases_lhs}
$\hatGap_k(\bar x_k, \bar y_k)$ could be lower bounded by the following inequalities
\begin{align}\label{eq:three_cases_lhs}
1) & ~ \hatGap_k(\bar x_k, \bar y_k) 
\geq 
( 1 - \frac{\gamma}{\rho}  ( \frac{1}{\alpha} - 1 ) ) \hatGap_{k+1}(x_0^{k+1}, y_0^{k+1})
+ \frac{\gamma}{2} ( 1 - \frac{1}{1 - \alpha} ) \| x_0^{k+1} - x_0^{k} \|^2  ,
\nonumber\\
2) & ~ \hatGap_k (\bar x_k, \bar y_k) 
\geq
P(x^{k+1}_0) - P(x^k_0) + \frac{\gamma}{2} \| \bar x_k - x_0^k \|^2 
, 
\text{ where } P(x) = \max_{y \in Y} f(x, y)  ,
\nonumber\\
3) & ~ \hatGap_k (\bar x_k, \bar y_k) 
\geq 
\frac{ \rho ( 1 - \beta ) }{ 2 (\frac{1}{\beta} - 1) } \| x_0^k - \hat x^*_k \|^2
- \frac{\rho}{2(\frac{1}{\beta} - 1)} \| \bar x_k - x_0^k \|^2   ,
\end{align}
where $0 < \alpha \leq 1$ and $0 < \beta \leq 1$.
\end{lem}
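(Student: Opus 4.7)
My plan is to prove the three lower bounds independently, each exploiting the structure of $\hat f_k(x,y) = f(x,y) + \frac{\gamma}{2}\|x - x_0^k\|^2$ together with $\gamma = 2\rho$, which makes $\hat f_k(\cdot, y)$ be $\rho$-strongly convex. Throughout I will repeatedly use the identities $\bar x_k = x_0^{k+1}$, $\bar y_k = y_0^{k+1}$, and $\hat f_{k+1}(x_0^{k+1}, y) = f(\bar x_k, y)$, as well as $\hat f_k(\bar x_k, \hat y_k(\bar x_k)) = P(\bar x_k) + \frac{\gamma}{2}\|\bar x_k - x_0^k\|^2$.

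For bound 2, I would start from the definition $\hatGap_k(\bar x_k, \bar y_k) = \hat f_k(\bar x_k, \hat y_k(\bar x_k)) - \hat f_k(\hat x_k(\bar y_k), \bar y_k)$. The first term equals $P(x_0^{k+1}) + \frac{\gamma}{2}\|\bar x_k - x_0^k\|^2$ directly. For the second term, since $\hat x_k(\bar y_k)$ minimizes $\hat f_k(\cdot, \bar y_k)$, we have $\hat f_k(\hat x_k(\bar y_k), \bar y_k) \leq \hat f_k(x_0^k, \bar y_k) = f(x_0^k, \bar y_k) \leq P(x_0^k)$, and bound 2 follows immediately.

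For bound 1, the key idea is to compare $\hat f_k$ and $\hat f_{k+1}$ at the common test point $x^+ := \hat x_{k+1}(\bar y_k)$. By optimality, $\hat f_k(\hat x_k(\bar y_k), \bar y_k) \leq \hat f_k(x^+, \bar y_k)$. The two regularizers differ only in the center, so the plan is to apply Young's inequality with parameter $t = \tfrac{1}{\alpha} - 1 > 0$ to $\|x^+ - x_0^k\|^2 = \|(x^+ - x_0^{k+1}) + (x_0^{k+1} - x_0^k)\|^2$, yielding exactly the coefficients $\tfrac{1}{\alpha}$ and $\tfrac{1}{1-\alpha}$ that appear in the statement. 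This gives $\hat f_k(x^+, \bar y_k) \leq \hat f_{k+1}(x^+, \bar y_k) + \tfrac{\gamma}{2}(\tfrac{1}{\alpha} - 1)\|x^+ - x_0^{k+1}\|^2 + \tfrac{\gamma}{2(1-\alpha)}\|x_0^{k+1} - x_0^k\|^2$. Then I bound $\|x^+ - x_0^{k+1}\|^2 \leq \tfrac{2}{\rho}\hatGap_{k+1}(x_0^{k+1}, y_0^{k+1})$ using $\rho$-strong convexity of $\hat f_{k+1}(\cdot, y_0^{k+1})$ (whose minimizer is $x^+$). Finally, using $\hat f_{k+1}(x^+, \bar y_k) = P(\bar x_k) - \hatGap_{k+1}(x_0^{k+1}, y_0^{k+1})$ and plugging back into $\hatGap_k(\bar x_k, \bar y_k)$ produces the stated bound after cancellation of the $P(\bar x_k)$ terms. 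The main obstacle here is selecting the Young's parameter in the correct form so that the coefficients come out as $1 - \tfrac{\gamma}{\rho}(\tfrac{1}{\alpha} - 1)$ and $1 - \tfrac{1}{1-\alpha}$ rather than something equivalent but messier.

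For bound 3, I would first establish the intermediate inequality $\hatGap_k(\bar x_k, \bar y_k) \geq \tfrac{\rho}{2}\|\bar x_k - \hat x_k^*\|^2$ by noting that the maximum in the first term of $\hatGap_k$ dominates $\hat f_k(\bar x_k, \hat y_k^*)$ and the minimum in the second term is dominated by $\hat f_k(\hat x_k^*, \bar y_k) \leq \hat f_k(\hat x_k^*, \hat y_k^*)$; combined with $\rho$-strong convexity of $\hat f_k(\cdot, \hat y_k^*)$ around its minimizer $\hat x_k^*$, this yields the desired lower bound. Then I split $x_0^k - \hat x_k^* = (x_0^k - \bar x_k) + (\bar x_k - \hat x_k^*)$ and apply Young's inequality $\|a+b\|^2 \leq (1+1/t)\|a\|^2 + (1+t)\|b\|^2$ with $t = \tfrac{1-\beta}{\beta}$, rearranged as a lower bound on $\|\bar x_k - \hat x_k^*\|^2$. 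Using $\tfrac{1-\beta}{\tfrac{1}{\beta}-1} = \beta$ and $\tfrac{1}{\tfrac{1}{\beta}-1} = \tfrac{\beta}{1-\beta}$, the coefficients match exactly. The only subtle point will be verifying the direction of the Young's inequality so that the negative term involves $\|\bar x_k - x_0^k\|^2$ with the precise factor $\tfrac{\rho}{2(\tfrac{1}{\beta}-1)}$.
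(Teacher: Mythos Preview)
Your proposal is correct and follows essentially the same route as the paper: for each of the three bounds you use the same ingredients (optimality of $\hat x_k(\bar y_k)$ or $\hat x_{k+1}(y_0^{k+1})$, $\rho$-strong convexity of $\hat f_k$ or $\hat f_{k+1}$ in $x$, and a Young-type split $\|a+b\|^2$ with parameters yielding exactly $\tfrac{1}{\alpha},\tfrac{1}{1-\alpha}$ or $\tfrac{1}{\beta},\tfrac{1}{1-\beta}$). The only cosmetic difference is orientation: for bound~1 the paper starts from $\hatGap_{k+1}(x_0^{k+1},y_0^{k+1})$ and bounds it above, while you start from $\hatGap_k(\bar x_k,\bar y_k)$ and bound it below, and for bound~3 the paper applies Young's inequality to $\|\bar x_k - x_0^k\|^2$ first whereas you first isolate $\tfrac{\rho}{2}\|\bar x_k - \hat x_k^*\|^2$; both orderings yield identical final inequalities.
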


\begin{proof} (of Lemma \ref{lemma:lhs_hat_gap})
\begin{align}
\label{eq:wcc_lhs}
&
\hatGap_k(\bar x_k, \bar y_k)
\nonumber\\
= &
  \frac{1}{10} \hatGap_k(\bar x_k, \bar y_k)
+ \frac{4}{5} \hatGap_k(\bar x_k, \bar y_k)
+ \frac{1}{10} \hatGap_k(\bar x_k, \bar y_k)
\nonumber\\
\stackrel{(a)}{\geq} &
\frac{1}{10} \Big\{ \Big( 1 - \frac{\gamma}{\rho}  ( \frac{1}{\alpha} - 1 ) \Big) \hatGap_{k+1}(x_0^{k+1}, y_0^{k+1})
+ \frac{\gamma}{2} ( 1 - \frac{1}{1 - \alpha} ) \| x_0^{k+1} - x_0^{k} \|^2 ) \Big\}
\nonumber\\
&
+ \frac{4}{5} \Big\{ P(x^{k+1}_0) + \frac{\gamma}{2} \| \bar x_k - x_0^k \|^2 
- P(x_0^k) \Big\}
\nonumber\\
&
+ \frac{1}{10} \Big\{ \frac{ \rho ( 1 - \beta ) }{ 2 (\frac{1}{\beta} - 1) } \| x_0^k - \hat x^*_k \|^2
- \frac{\rho}{2(\frac{1}{\beta} - 1)} \| \bar x_k - x_0^k \|^2 \Big\}
\nonumber\\
= &
\frac{1}{10} \Big( 1 - \frac{\gamma}{\rho}  ( \frac{1}{\alpha} - 1 ) \Big) \hatGap_{k+1}( x_0^{k+1}, y_0^{k+1} )
+ \frac{4}{5} ( P(x^{k+1}_0) - P(x_0^k) )
\nonumber\\
&
+ \Big( \frac{1}{10} \cdot \frac{\gamma}{2}(1-\frac{1}{1-\alpha}) 
+ \frac{4}{5} \cdot \frac{\gamma}{2} 
- \frac{1}{10} \cdot \frac{\rho}{2(\frac{1}{\beta}-1)} \Big) \| \bar x_k - x_0^k \|^2
\nonumber\\
&
+ \frac{1}{10} ( \frac{ \rho (1-\beta) }{2 ( \frac{1}{\beta}-1 ) } ) \| x_0^k - \hat x^*_k \|^2 ]
\nonumber\\
\stackrel{(b)}{=} &
\frac{1}{10} ( 1 - 2  ( \frac{1}{\frac{5}{6}} - 1 ) ) \hatGap_{k+1}( x_0^{k+1}, y_0^{k+1} )
+ \frac{4}{5} ( P(x^{k+1}_0) - P(x_0^k) )
\nonumber\\
&
+ \Big( \frac{1}{10} \cdot \frac{\gamma}{2}(1-\frac{1}{1-\frac{5}{6}}) 
+ \frac{4}{5} \cdot \frac{\gamma}{2} 
- \frac{1}{10} \cdot \frac{\gamma}{4(\frac{1}{\frac{1}{2}}-1)} \Big) \| \bar x_k - x_0^k \|^2
\nonumber\\
&
+ \frac{1}{10} ( \frac{ \rho (1-\frac{1}{2}) }{2 ( \frac{1}{\frac{1}{2}}-1 ) } ) \| x_0^k - \hat x^*_k \|^2
\nonumber\\
= &
\frac{3}{50} \hatGap_{k+1}( x_0^{k+1}, y_0^{k+1} ) ]
+ \frac{4}{5} ( P(x^{k+1}_0) - P(x_0^k) )
\nonumber\\
&
+ \frac{\gamma}{8} \| \bar x_k - x_0^k \|^2
+ \frac{\gamma}{80} \| x_0^k - \hat x^*_k \|^2
\nonumber\\
\stackrel{(c)}{\geq} &
\frac{3}{50} \hatGap_{k+1}( x_0^{k+1}, y_0^{k+1} )
+ \frac{4}{5} ( P(x^{k+1}_0) - P(x_0^k) )
\nonumber\\
&
+ \frac{\gamma}{80} \| x_0^k - \hat x^*_k \|^2  ,
\end{align}
where inequality $(a)$ is due to Lemma \ref{lemma:three_cases_lhs},
inequality $(b)$ is due to the setting of $\gamma = 2\rho$, $\alpha = \frac{5}{6}$, $\beta = \frac{1}{2}$.
Inequality $(c)$ is due to $\| \bar x_k - x_0^k \|^2 \geq 0$.
\end{proof}

\section{Proof of Lemma~\ref{lemma:three_cases_lhs} }
\label{section_proof:lemma:three_cases_lhs}

\begin{proof}   % proof of the three cases in LHS
Before we prove the three results, we first state two results of Young's inequality as follows
\begin{align}
&
\| x - y \|^2 
=
\| x - z + z - y \|^2
=
\| x - z \|^2 + \| z - y \|^2 - 2 \langle x - z, y - z \rangle
\nonumber\\
\geq &
\| x - z \|^2 + \| z - y \|^2 - \alpha \| x - z \|^2 - \frac{1}{\alpha} \| y - z \|^2
\nonumber\\
= &
( 1 - \alpha ) \| x - z \|^2
+ ( 1 - \frac{1}{\alpha} ) \| z - y \|^2  
\nonumber\\
\label{eq:youngs1}
\Rightarrow
&
\| x - z \|^2 
\leq
\frac{1}{1 - \alpha} \| x - y \|^2 + \frac{1}{\alpha} \| y - z \|^2
\\
\label{eq:youngs2}
\Rightarrow &
- \| y - z \|^2 
\leq
- \alpha \| x - z \|^2
+ \frac{\alpha}{1 - \alpha} \| x - y \|^2  
\\
\label{eq:youngs3}
\Rightarrow &
\| x - y \|^2 
\geq
(1 - \alpha) \| x - z \|^2 + (1 - \frac{1}{\alpha}) \| y - z \|^2  ,
\end{align}
where $0 < \alpha \leq 1$.

We first consider the result 1).
\begin{align*}
&
\hatGap_{k+1} (x_0^{k+1}, y_0^{k+1})
\\
= & 
\hat f_{k+1} (x_0^{k+1}, \hat y_{k+1}(x_0^{k+1})) 
- \hat f_{k+1}(\hat x_{k+1}(y_0^{k+1}), y_0^{k+1})
\\
= &
f(x_0^{k+1}, \hat y_{k+1}(x_0^{k+1})) 
+ \frac{\gamma}{2} \| x_0^{k+1} - x_0^{k+1} \|^2
\\
&
- f(\hat x_{k+1}(y_0^{k+1}), y_0^{k+1}) 
- \frac{\gamma}{2} \| \hat x_{k+1}(y_0^{k+1}) - x_0^{k+1} \|^2
\\
= &
f(x_0^{k+1}, \hat y_{k+1}(x_0^{k+1})) 
+ \frac{\gamma}{2} \| x_0^{k+1} - x_0^{k} \|^2
- f(\hat x_{k+1}(y_0^{k+1}), y_0^{k+1}) 
- \frac{\gamma}{2} \| \hat x_{k+1}(y_0^{k+1}) - x_0^{k} \|^2
\\
&
+ \frac{\gamma}{2} \| \hat x_{k+1}(y_0^{k+1}) - x_0^{k} \|^2
- \frac{\gamma}{2} \| \hat x_{k+1}(y_0^{k+1}) - x_0^{k+1} \|^2
- \frac{\gamma}{2} \| x_0^{k+1} - x_0^{k} \|^2
\\
\stackrel{(a)}{\leq} &
f(\bar x_{k}, \hat y_{k+1}(\bar x_{k})) 
+ \frac{\gamma}{2} \| \bar x_{k} - x_0^{k} \|^2
- f(\hat x_{k+1}(\bar y_{k}), \bar y_{k}) 
- \frac{\gamma}{2} \| \hat x_{k+1}(\bar y_{k}) - x_0^{k} \|^2
\\
&
+ \frac{\gamma}{2} \Big\{ \frac{1}{\alpha} \| \hat x_{k+1}(y_0^{k+1}) - x_0^{k+1} \|^2 
                  + \frac{1}{1 - \alpha} \| x_0^{k+1} - x_0^{k} \|^2 \Big\}
\\
&
- \frac{\gamma}{2} \| \hat x_{k+1}(y_0^{k+1}) - x_0^{k+1} \|^2
- \frac{\gamma}{2} \| x_0^{k+1} - x_0^{k} \|^2
\\
= &
\hat f_k(\bar x_{k}, \hat y_k(\bar x_{k}))
- \hat f_k(\hat x_{k+1}(\bar y_{k}), \bar y_{k})
\\
&
+ \frac{\gamma}{2} ( \frac{1}{\alpha} - 1 ) \| \hat x_{k+1}(y_0^{k+1}) - x_0^{k+1} \|^2 
+ \frac{\gamma}{2} ( \frac{1}{1 - \alpha} - 1 ) \| x_0^{k+1} - x_0^{k} \|^2 
\\
\stackrel{(b)}{\leq} &
\hat f_k(\bar x_{k}, \hat y_k(\bar x_{k}))
- \hat f_k(\hat x_{k+1}(\bar y_{k}), \bar y_{k})
\\
&
+ \frac{\gamma}{2} ( \frac{1}{\alpha} - 1 ) \frac{2}{\rho} ( \hat f_{k+1} (x_0^{k+1}, y_0^{k+1}) - \hat f_{k+1} (\hat x_{k+1}(y_0^{k+1}), y_0^{k+1}) )
+ \frac{\gamma}{2} ( \frac{1}{1 - \alpha} - 1 ) \| x_0^{k+1} - x_0^{k} \|^2 
\\
\stackrel{(c)}{\leq} &
\hatGap_k (\bar x_{k}, \bar y_{k})
+ \frac{\gamma}{\rho}  ( \frac{1}{\alpha} - 1 ) \hatGap_{k+1} (x_0^{k+1}, y_0^{k+1}) 
+ \frac{\gamma}{2} ( \frac{1}{1 - \alpha} - 1 ) \| x_0^{k+1} - x_0^{k} \|^2   ,
\end{align*}
where inequality $(a)$ is due to (\ref{eq:youngs1}) ($0 < \alpha \leq 1$).
Inequality $(b)$ is due to $\rho$-strong convexity of $\hat f_{k+1}(x, y_0^{k+1})$ in $x$ and optimality at $\hat x_{k+1}(y_0^{k+1})$.
Inequality $(c)$ is due to $\hat f_k(\hat x_{k+1}(\bar y_{k}), \bar y_{k}) \geq \hat f_k(\hat x_{k}(\bar y_{k}), \bar y_{k})$ 
and 
$\hat f_{k+1}(x_0^{k+1}, y_0^{k+1}) \leq \hat f_{k+1}(x_0^{k+1}, \hat y_{k+1} (x_0^{k+1}))$.

Re-organizing the above inequality, we have
\begin{align*}
\hatGap_k (\bar x_{k}, \bar y_{k}) 
\geq &
( 1 - \frac{\gamma}{\rho} ( \frac{1}{\alpha} - 1 ) ) \hatGap_{k+1}(x_0^{k+1}, y_0^{k+1})
+ \frac{\gamma}{2} ( 1 - \frac{1}{1 - \alpha} ) \| x_0^{k+1} - x_0^{k} \|^2   ,
\end{align*}
which proves result $1)$.

Then we turn to result 2) as follows.
\begin{align*}
\hatGap_k (\bar x_k, \bar y_k) 
= &
\hat f_k(\bar x_k, \hat y_k(\bar x_k)) - \hat f_k(\hat x_k(\bar y_k), \bar y_k)
\\
\geq &
\hat f_k(\bar x_k, \hat y_k(\bar x_k)) - \hat f_k(x_0^k, \bar y_k)
\\
\geq &
\hat f_k(\bar x_k, \hat y_k(\bar x_k)) - \hat f_k(x_0^k, \hat y_k(x_0^k))
\\
= &
f(\bar x_k, \hat y_k(\bar x_k)) + \frac{\gamma}{2} \| \bar x_k - x_0^k \|^2 
- f(x_0^k, \hat y_k(x_0^k)) - 0
\\
= & 
f(x^{k+1}_0, \hat y_k(x_0^{k+1})) + \frac{\gamma}{2} \| \bar x_k - x_0^k \|^2 
- f(x_0^k, \hat y_k(x_0^k))  
\\
= &
P(x^{k+1}_0) - P(x^k_0) + \frac{\gamma}{2} \| \bar x_k - x_0^k \|^2    ,
\end{align*}
which proves result $2)$.

Result 3) can be proved as follows
\begin{align*}
\| \bar x_k - x_0^k \|^2
\stackrel{(a)}{\geq} &
(1 - \beta) \| x_0^k - \hat x^*_k \|^2 + ( 1 - \frac{1}{\beta} ) \| \hat x^*_k - \bar x_k \|^2
\\
\stackrel{(b)}{\geq} &
(1 - \beta) \| x_0^k - \hat x^*_k \|^2 
+ ( 1 - \frac{1}{\beta} ) \frac{2}{\rho} ( \hat f_k(\bar x_k, \hat y^*_k) - \hat f_k(\hat x^*_k, \hat y^*_k) )
\\
\stackrel{(c)}{\geq} &
(1 - \beta) \| x_0^k - \hat x^*_k \|^2 
+ ( 1 - \frac{1}{\beta} ) \frac{2}{\rho} \hatGap_k(\bar x_k, \bar y_k) )
\\
\Rightarrow 
\hatGap_k(\bar x_k, \bar y_k)
\geq &
\frac{ \rho ( 1 - \beta ) }{ 2 (\frac{1}{\beta} - 1) } \| x_0^k - \hat x^*_k \|^2
- \frac{\rho}{2(\frac{1}{\beta} - 1)} \| \bar x_k - x_0^k \|^2  ,
\end{align*}
where inequality $(a)$ is due to (\ref{eq:youngs3}) and $0 < \beta \leq 1$.
Inequality $(b)$ is due to $\rho$-storng convexity of $\hat f_k$ in $x$.
Inequality $(c)$ is due to $0 < \beta \leq 1$ and
\begin{align*}
\hat f_k(\bar x_k, \hat y^*_k) - \hat f_k(\hat x^*_k, \hat y^*_k) 
\leq &
\hat f_k(\bar x_k, \hat y_k(\bar x_k)) - \hat f_k(\hat x^*_k, \bar y_k) 
\\
\leq &
\hat f_k(\bar x_k, \hat y_k(\bar x_k)) - \hat f_k(\hat x_k (\bar y_k), \bar y_k) 
\\
= &
\hatGap_k (\bar x_k, \bar y_k)  .
\end{align*}
\end{proof}  % end of proof of the three cases in LHS

\end{document}